\documentclass[11pt]{article}
\usepackage{relsize}
\usepackage{exscale}
\usepackage{authblk}
\usepackage[utf8]{inputenc}
\usepackage[%
  style=numeric,
  backend=biber,
  url=false,
  doi=false,
  isbn=false,
  eprint=false,
  giveninits=true
]{biblatex}
\NewCommandCopy{\biblatexcite}{\cite}
\RenewDocumentCommand{\cite}{oms}{%
  \IfNoValueTF{#1}{%
    \IfBooleanTF{#3}{\optcite{#2}}{\biblatexcite{#2}}%
  }{%
    \biblatexcite[#1]{#2}%
  }%
}
\NewDocumentCommand{\optcite}{mm}{\biblatexcite[#2]{#1}}

\usepackage{amsmath,amsthm,amsfonts,amssymb}
\usepackage{mathtools}
\usepackage[theoremfont]{newtxtext} %
\usepackage{newtxmath} %
\usepackage{graphicx}
\usepackage{verbatim}
\usepackage{enumitem}
 \setlist[enumerate,1]{label=(\arabic*)}
 \setlist[enumerate,2]{label=(\alph*)}
\usepackage{amssymb}
\usepackage{esint}
\usepackage{verbatim}
\usepackage[version=4]{mhchem}
\usepackage{stmaryrd}
\usepackage{color}
\usepackage{calc}
\definecolor{darkred}{rgb}{0.8,0,0}
\definecolor{darkblue}{rgb}{0,0,0.55}
\definecolor{darkgreen}{rgb}{0,0.39,0}

\newtheorem{theorem}{Theorem}[section]
\newtheorem{proposition}[theorem]{Proposition}
\newtheorem{lemma}[theorem]{Lemma}
\newtheorem{remark}[theorem]{Remark}
\newtheorem*{remark*}{Remark}
\newtheorem{corollary}[theorem]{Corollary}

\newlist{steps}{enumerate}{1}
\setlist[steps]{label=\textit{Step \arabic*:},leftmargin=0pt,itemindent={\widthof{\textit{Step 3: }}}}

\numberwithin{equation}{section}

\newcommand{\eps}{\varepsilon}

\newcommand{\R}{\mathbb{R}}
\newcommand{\N}{\mathbb{N}}

\newcommand{\calH}{\mathcal{H}}
\newcommand{\calL}{\mathcal{L}}
\newcommand{\calM}{\mathcal{M}}

\newcommand{\embeds}{\hookrightarrow}

\newcommand{\Chi}{\mathcal{X}}

\newcommand{\weakto}{\rightharpoonup}
\newcommand{\weakstarto}{\stackrel{\ast}{\rightharpoonup}}
\newcommand{\gmax}{\mathbf{g}_{\max}}
\newcommand{\gmin}{\mathbf{g}_{\min}}

\newcommand{\gmaxt}{\gmax}
\newcommand{\gbarmax}{\bar{\mathbf{g}}_{\max}}
\newcommand{\gbarmin}{\bar{\mathbf{g}}_{\min}}
\newcommand{\dd}{\,\mathrm{d}}

\DeclareMathOperator{\loc}{loc}

\DeclareMathOperator{\spt}{spt}

\DeclareMathOperator{\argmax}{arg\,max}

\usepackage{hyperref}
\begin{document}
\title{ 
Localization properties of a free boundary problem for cell polarization}
\author[a,1]{B. Niethammer}
\author[b,3] {M. R\"oger}
\author[a,2] {J.J.L. Vel\'azquez}
\affil[a]{Institute for Applied Mathematics, University of Bonn, Endenicher Allee 60, 53115 Bonn, Germany}
\affil[b]{ Technische Universität Dortmund, 
Fakultät für Mathematik, Vogelpothsweg 87,
44227 Dortmund, Germany}

\affil[1]{\href{mailto:niethammer@iam.uni-bonn.de}{niethammer@iam.uni-bonn.de}}
\affil[2]{\href{mailto:velazquez@iam.uni-bonn.de}{velazquez@iam.uni-bonn.de}}
\affil[3]{\href{mailto:roeger@tu-dortmund.de}{roeger@tu-dortmund.de}}

 \maketitle

\begin{abstract}
We investigate localization in a time-dependent contrained obstacle problem for the density of an active protein on a cell membrane. This free boundary problem has been derived in \cite{LNRV21} as a model for cell polarization under the influence of an external signal.  
We show that in the limit of small mass  there is instantaneous localization, that is  the mass of the active protein concentrates  in the set where the signal is maximal for any positive time $t>0$. 
We  give a precise a characterization of this concentration process on the appropriate slow time scale and illustrate this for a few examples of the external signal.

\bigskip
{\bf AMS Classification.} 35R35 %
35K86, 92C37, 35B40
\\[2ex]\noindent%
{\bf Keywords.} Cell polarization, time-dependent constrained obstacle problem,  concentration of mass

\bigskip
\begin{center}
{\it \large In memory of Bob Kohn, who inspired so many with his ability to connect deep insights into applications with rigorous mathematics.}
\end{center}

\end{abstract}

\section{Introduction}

Cell polarization --- the establishment of an asymmetric spatial distribution of molecular components on the cell membrane and in the cell interior --- is a central mechanism in processes such as cell differentiation and cell motility. 
Some cell types (e.g. neurons, muscle cells) are permanently polarized, while many others (e.g. Dictyostelium discoideum, keratinocytes) polarize transiently in response to external cues. 
Different modules contribute to the cell polarization process: signal reception and input encoding converts an extracellular cue into an intracellular input signal. An adaptation module identifies relative or directional information and rejects uniform background signals. Signal amplification increases the contrast of a directional signal while preserving rapid and reversible tracking of the external cue.  Membrane localization and signal sharpening concentrate the amplified signal to a restricted membrane region and polarity selection and stabilization creates persistent patterns or front--back states.

Mathematically, cell polarization has often been modelled %
by reaction--diffusion systems that combine short--range activation with longer--range inhibition \cite{RaEd17}.
Prominent examples are the seminal work of Turing \cite{Turi52} on symmetry breaking and spontaneous polarization, its generalization in form of the LALI (local activation--lateral inhibition) paradigm of Gierer--Meinhardt \cite{GiMe72}, the LEGI (local excitation--global inhibition) model for adaptation and directional sensing \cite{PaDe99,LeIg02}, or wave--pinning models that create persistent polarized domains \cite{MoJE08}.

In realistic cells membrane reactions are coupled to attachment--detachment kinetics and diffusion in the cell interior; this coupling motivates bulk--surface models that treat membrane concentrations and cytosolic concentrations as coupled unknowns. 
Such bulk--surface formulations admit additional symmetry--breaking mechanisms and have been the subject of intensive study \cite{LeRa05,NGCR07,RaeRoe12,RaeRoe14}, also increasingly from a rigorous PDE viewpoint \cite{Miel13,BKMS17,FeLT18,HaRo18,AlET18,MoTa23,AuBo24,Diss21}.

For the mathematical analysis of amplification and pattern selection, asymptotic reductions are particularly useful. 
In certain parameter regimes one can rigorously (or formally) reduce bulk--surface systems to more accessible limit problems, notably free--boundary problems on the membrane \cite{ElRV17,DMMS18,MFNS23,GKRR16,AbKa20,KoSU26}.

\bigskip
In this paper we continue the study of a minimal model for the signal amplification and localization steps in cell polarization developed in \cite{NiRV20,LNRV21,LNRV23,LNRV24}. 

To describe our results in more detail, let us first review the models studied in \cite{NiRV20,LNRV21}. 
The starting point is a coupled bulk-surface reaction--diffusion system for the densities of the  active and the inactive version of a protein respectively,
where the deactivation term is given by a Michaelis--Menten law and the other reaction and attachment/detachment laws are linear.
In the limit of large rate constants and large total protein mass
this system reduces to a free boundary problem for the concentration of the active protein on the membrane, in the following denoted by $u$, coupled to the concentration of the inactive protein $w$ in the interior of the cell.  

In the following $\Omega \subset \R^3$ is a bounded connected domain which represents the cell and $\Gamma:=\partial \Omega$ is a smooth closed surface representing the membrane of the cell. 
We assume that the external chemical signal is already processed and given by a function $g\colon \Gamma \times [0,\infty) \to (0,\infty)$. We assume that $g$ is continuous, $\nabla g \in L^2(\Gamma)$, 
and that $g$ satisfies $\gbarmin\leq g\leq \gbarmax$ for positive constants $0<\gbarmin<\gbarmax<1$.
In the following we denote
\begin{equation*}
 \gmax(t):=\max_{\Gamma} g(\cdot,t) \qquad \textrm{ and } \qquad \gmin(t):=\min_{\Gamma} g(\cdot,t)\,.
\end{equation*}

The free boundary problem for the concentration $u \colon \Gamma \times [0,\infty)$ that was derived in \cite{NiRV20,LNRV21} reads as follows:
\begin{align}
  \partial_t u -\Delta u &= -(1-g)\xi + w g \,,& \textrm{ on } \Gamma\times (0,\infty)\,,\label{eq:udk-1}\\
  u \geq 0\,&,\quad u\xi = u\,,\quad 0\leq \xi\leq 1\,& \textrm{ a.e.~on } \Gamma\times (0,\infty)\,, \label{eq:udk-2}\\ 
  -D \Delta w &= 0 & \textrm{ in } \Omega\times (0,\infty) \,,\label{eq:udk-3a}\\
  -D \frac{\partial w}{\partial n} &= - (1-g) \xi + w g &  \textrm{ on } \Gamma\times (0,\infty)\,,
  \label{eq:udk-3b}\\
  u(\cdot,0) &= u_0& \textrm{ on } \Gamma\,. \label{eq:udk-4} 
\end{align}
Here $D>0$ is the ratio of the diffusion constants in the cell and the membrane respectively and $u_0 \in C^0(\Gamma)\cap H^1(\Gamma)$, $u_0 \geq 0$, is the initial datum for $u$. 
The function $w$ is the concentration of the inactive protein in the interior of the cell, whereas $\xi$ is a reduced Michaelis-Menton law and can be viewed as the Langrange multiplier associated to the constraint that $u$ is nonnegative. 

As $D$ is typically large we also consider the following simpler system that arises from \eqref{eq:udk-1}-\eqref{eq:udk-4} in the limit $D \to \infty$:
\begin{align}
  \partial_t u -\Delta u &= -(1-g)\xi + \alpha g & \textrm{ on } \Gamma\times (0,\infty)\,, \label{eq:uk-1}\\
  u \geq 0\,&,\quad u\xi = u\,,\quad 0\leq \xi \leq 1& \textrm{ a.e~on } \Gamma\times (0,\infty) \label{eq:uk-2}\\ 
  \alpha(t) &=\frac{\int_{\{u(\cdot,t)>0\}}(1-g)\,d\sigma}{\int_{\{u(\cdot,t)>0\}}g\,d\sigma}\,,
 & \textrm{ for a.e.~}t>0\,,\label{eq:uk-3}\\
  u(\cdot,0) &= u_0& \textrm{ on } \Gamma\,. \label{eq:uk-4} 
\end{align}
Here $\alpha$ is the analogue of $w$ in \eqref{eq:udk-1}-\eqref{eq:udk-4}, but is constant in space and just time-dependent.
Mathematically speaking, the function $\alpha$ in \eqref{eq:uk-1}-\eqref{eq:uk-4} can be seen as a Lagrange multiplier associated to mass conservation. 

For the solutions of both systems we have mass conservation, that is 
\begin{equation}\label{eq:masscons}
 \int_{\Gamma} u(\cdot,t)\,d\sigma = \int_{\Gamma} u_0\,d\sigma =:m \qquad 
 \textrm{ for all } t>0\,.
\end{equation}

In \cite{NiRV20,LNRV21} we proved global existence and uniqueness of solutions to \eqref{eq:udk-1}-\eqref{eq:udk-4}, respectively \eqref{eq:uk-1}-\eqref{eq:uk-4}, as well as existence, uniqueness and global stability of steady states. Some further results on regularity aspects of the solutions are contained in \cite{LNRV23,LNRV24}.

One appealing feature of the free boundary models above is that they admit a sharp and analytically tractable notion of polarization: a state is polarized precisely when the support of the active protein concentration and its complement both have positive measure. This characterization has made it possible, for instance, to derive a threshold criterion for polarization in terms of the total protein mass. 

\medskip
A second interesting feature of our model is the {\em precise localization} of the active protein at the signal maximum.
In \cite{NiRV20} we first established a corresponding result for steady states in the limit of vanishing mass $m$ and in \cite{FNV26} we described this concentration process in more detail. 

Our goal in the present paper is to study this property in the time dependent setting. 
The analysis of solutions to parabolic equations near points with distinguished behavior by studying suitable blow-up limits has been extensively developed since the seminal
work by Giga and Kohn \cite{GK85,GK87}. 
Here, we first show in Section \ref{S.zeromass} that in the limit of vanishing mass solutions to 
\eqref{eq:udk-1}-\eqref{eq:udk-4} localize instantaneously (see Theorems \ref{P.localization} and \ref{T.localization} for the precise statements) and follow the evolution of the maximizer for time-dependent signals. 
We prove these results for the case $D<\infty$. We note that without stating them explicitly, the same results hold true for the solutions of 
\eqref{eq:uk-1}-\eqref{eq:uk-4}.  In fact, the arguments are even simpler since we do not need
the compensated compactness result, Lemma  \ref{L.wlvklimit}, in that case. 

In Section \ref{S.rescaled} we then investigate this localization process in more detail on the appropriate slow time scale $t=O(m)$. 
We do this first for the case $D=\infty$ in Section \ref{Ss.infiniteD}. More precisely, we consider a sequence of rescaled solutions 
$u_m(x,t):=\frac{1}{m} u(x,mt)$ of \eqref{eq:uk-1}-\eqref{eq:uk-4} and consider the limit $m \to 0$. This sequence of solutions converges to solutions of the analogue of \eqref{eq:uk-1}-\eqref{eq:uk-4} but without the Laplace operator (see 
\eqref{4eq:UEps0DInfty1}-\eqref{4eq:UEps0DInfty3}). For this system we then show that in the limit $t\to \infty$ the support of solutions is monotonically decreasing and concentrates in the set where $g$ attains its maximum. Our main result is 
a characterization of the corresponding limit measure in Theorem \ref{thm:Conc2}. In Section \ref{Ss.finiteD} we establish the corresponding result for $D<\infty$ (see \eqref{eq:65a}-\eqref{eq:66b}). It turns our that the proofs are similar to the case $D=\infty$ and we only sketch  the arguments. 

Finally, in Section \ref{S.examples} we compute the limit measure for some particular examples of $g$. In particular, we study the generic case that $g$ has a finite number of isolated non-degenerate maxima, but we also construct an example where the mass of the active protein oscillates between two maxima in the limit $t \to \infty$.

An interesting feature of the asymptotic limit of the model  \eqref{4eq:UEps0DInfty1}-\eqref{4eq:UEps0DInfty3} 
is that it has the so-called adaptation property, a property that has been extensively studied in systems biology and mathematical biology. 
It states  that the response of a biological system to a signal is independent of the intensity of the signal but rather depends only on the presence of a gradient of concentration (in time or space). 
Notice that this is exactly what happens in the case of the
model \eqref{4eq:UEps0DInfty1}-\eqref{4eq:UEps0DInfty3}.  
Indeed, in this case, the response of the system to any chemical signal
with a non-trivial gradient of concentration is the onset of a peak of
active molecules, strongly concentrated near the maximum of the external
signal. This response is independent of the concentration of the chemical
signal or the strength of the gradient as long as this gradient of
concentration is not too small.

\section{Localization in the zero mass limit}\label{S.zeromass}

In this section we consider a sequence of initial data $(u_{0,k})_k$ that satisfies 
\begin{equation}
  m_k:=\int_\Gamma u_{0,k}\,d\sigma \to 0\qquad \textrm{ as } k \to \infty\,
  \label{eq:uk-5}
\end{equation}
and study the behavior of solutions $(u_k,\xi_k,w_k)_k$ of \eqref{eq:udk-1}-\eqref{eq:udk-4} with initial data $u_{0,k}$ satisfying \eqref{eq:uk-5}.
For notational simplicity we set $D=1$ and denote
 $\alpha_*, \alpha^* \colon [0,\infty) \to (0,\infty)$ by  
\begin{equation}
   \alpha_*(t):= \frac{1-\gmaxt(t)}{\gmaxt(t)} \qquad \mbox{ and } \qquad \alpha^*(t) := \frac{1-\gmin(t)}{\gmin(t)}\,.
  \label{eq:DefAlphaStar}
\end{equation}

\begin{remark}\label{rem:Lh}
For the following it will be convenient to characterize $w$ satisfying \eqref{eq:udk-3a}, \eqref{eq:udk-3b} as
\begin{equation}
 w=L_g\big( (1-g)\xi\big)\,,
 \label{eq:Lhw}
\end{equation}
where 
$L_h:L^2(\Gamma)\to H^1(\Omega)\subset L^2(\Gamma)$, $h\in L^\infty(\Gamma)$, $h\geq 0$, $|\{h>0\}|>0$, is the family of linear operators introduced in \cite{LNRV21}*{p.~1226}, i.e.~$z=L_h(v)$ satisfies
\begin{equation}
	0 =\Delta z \textrm{ in } \Omega, \qquad
	\frac{\partial z}{\partial n}  + hz = v \textrm{ on }\Gamma\,.
  \label{eq:DefLh}%
\end{equation}
By \cite{LNRV21}*{Lemma 2.6} the map $L_h:L^2(\Gamma)\to H^1(\Omega)$ is continuous, self-adjoint and $L_h(h)=1$. 
Using the continuity of $H^1(\Omega)\embeds L^4(\Gamma)$, the continuity of the Neumann-to-Dirichlet map $L^4(\Gamma)\to W^{1,4}(\Gamma)$, see \cite{NiRV20}*{Appendix A}, and the continuous embedding $W^{1,4}(\Gamma)\embeds C^{0,\frac{1}{2}}(\Gamma)$ we obtain 
\begin{align}
  L_h: L^4(\Gamma)\to W^{1,4}(\Gamma)\quad
  \textrm{ and }\quad L_h: L^4(\Gamma)\to C^{0,\frac{1}{2}}(\Gamma)
  \textrm{ are continuous.}
  \label{eq:LhCont}
\end{align}
In addition $h \mapsto L_h$ is monotone decreasing. 
This means for any $h_1,h_2\in L^\infty(\Gamma)$ with $0\leq h_1\leq h_2$ we have
\begin{equation}\label{Lmonotoneh}
  L_{h_1}(s)\geq L_{h_2}(s) \quad\textrm{ for all }s\in L^2(\Gamma),\, s\geq 0\,.
\end{equation}
We also note that it holds 
\begin{equation}\label{Lmonotones}
 L_h(s_1) \geq L_h(s_2) \qquad \mbox{ if } s_1 \geq s_2 \; \mbox{ a.e.}.
\end{equation}

Furthermore it has been shown in \cite[Proposition 2.7]{LNRV21}, that for a solution $(u,\xi,w)$ of \eqref{eq:udk-1}-\eqref{eq:udk-4}
\begin{equation}\label{Lformula}
 L_g\big( (1-g)\xi)= L_{g\chi} \big( (1-g) \chi\big)\,,
\end{equation}
where $\chi$ is the characteristic function of the set $\{x \in \Gamma, t>0 \,|\, u(x,t)>0\}$. 
\end{remark}

The properties of $L$ imply the following a priori bounds on $w$.
 
\begin{lemma}\label{L.aprioriwk}
For almost all $t>0$ we have
\begin{equation*}
 \alpha_*(t) \leq w_k(\cdot,t) \leq \alpha^*(t) \qquad \textrm{ in } \Gamma\,.
\end{equation*}
\end{lemma}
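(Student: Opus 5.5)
We use the representation \eqref{Lformula}, $w_k(\cdot,t)=L_{g\chi_k}\big((1-g)\chi_k\big)$, where $\chi_k(\cdot,t)$ is the characteristic function of $\{u_k(\cdot,t)>0\}$. The plan is to compare $(1-g)\chi_k$ with a constant multiple of $g\chi_k$ and then use linearity together with the identity $L_h(h)=1$ from Remark \ref{rem:Lh}. We work at a fixed $t$ for which \eqref{Lformula} holds and $|\{u_k(\cdot,t)>0\}|>0$, so that $h:=g\chi_k(\cdot,t)$ is admissible. Mass conservation \eqref{eq:masscons} with $m_k>0$ guarantees that the positivity set has positive measure. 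For $m_k=0$ the solution vanishes identically and the statement is understood for $k$ large; we will note this case but not dwell on it.

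\emph{Upper bound.} Since $g\geq \gmin(t)$, the map $s\mapsto (1-s)/s$ is decreasing on $(0,1)$. Hence pointwise on $\Gamma$
\begin{equation*}
 (1-g)\chi_k=\frac{1-g}{g}\,g\chi_k\leq \alpha^*(t)\,g\chi_k .
\end{equation*}
By the order preservation \eqref{Lmonotones} and the linearity of $L_h$, we get
\begin{equation*}
 w_k(\cdot,t)\leq \alpha^*(t)\,L_{g\chi_k}(g\chi_k)=\alpha^*(t).
\end{equation*}

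\emph{Lower bound.} In the same way, $g\leq\gmax(t)$ gives $(1-g)\chi_k\geq \alpha_*(t)\,g\chi_k$. Applying \eqref{Lmonotones} then yields $w_k(\cdot,t)\geq\alpha_*(t)$.

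Both inequalities hold a.e.\ on $\Gamma$, and pass to all of $\Gamma$ by the continuity of $L_h$ into $C^{0,\frac12}(\Gamma)$ from \eqref{eq:LhCont}.

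The only genuinely delicate point is to justify \eqref{Lmonotones} as a comparison principle for the Robin problem \eqref{eq:DefLh}. Since $L_h$ is linear, it suffices to show $L_h(s)\geq0$ for $s\geq0$. To prove this, we set $z=L_h(s)$ and test the weak formulation with $z^-$. This gives
\begin{equation*}
 -\int_\Omega|\nabla z^-|^2-\int_\Gamma h\,(z^-)^2=\int_\Gamma s\,z^-\geq 0,
\end{equation*}
so $\nabla z^-=0$ and $h z^-=0$ on $\Gamma$. Because $|\{h>0\}|>0$ and $z^-$ is constant, it follows that $z^-=0$. Since the paper states \eqref{Lmonotones} as a known property, we expect to simply invoke it.
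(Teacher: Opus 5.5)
Your proof is correct. It differs from the paper's in one small but genuine respect.

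The paper changes the operator. It bounds $L_{g\chi_k}\big((1-g)\chi_k\big)$ from below by $L_{\gmax\chi_k}\big((1-\gmax)\chi_k\big)$, which uses two facts:
\begin{itemize}
\item the monotonicity in the Robin coefficient \eqref{Lmonotoneh}, to pass from $g\chi_k$ to $\gmax\chi_k$;
\item the order preservation \eqref{Lmonotones}, to pass from $(1-g)\chi_k$ to $(1-\gmax)\chi_k$.
\end{itemize}
It then splits $(1-\gmax)\chi_k=\frac{1}{\gmax}\gmax\chi_k-\gmax\chi_k$ and applies $L_{\gmax\chi_k}(\gmax\chi_k)=1$.

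You instead keep $L_{g\chi_k}$ fixed and factor the data pointwise as $(1-g)\chi_k=\frac{1-g}{g}\,g\chi_k$. You therefore need only positivity and linearity of $L_{g\chi_k}$, together with $L_{g\chi_k}(g\chi_k)=1$. This is slightly more economical. The monotonicity \eqref{Lmonotoneh} is itself a consequence of positivity: $L_{h_1}(s)-L_{h_2}(s)$ solves a Robin problem with coefficient $h_1$ and nonnegative data $(h_2-h_1)L_{h_2}(s)$. Your route bypasses that step entirely, and your self-contained proof of positivity by testing with $z^-$ is correct.

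Your caveat that $|\{u_k(\cdot,t)>0\}|>0$ is required is a point the paper leaves implicit. Both arguments need it, since $L_h(h)=1$ requires $|\{h>0\}|>0$. It is not merely technical: if $u_k\equiv 0$, then any constant $w_k\in[0,\alpha_*]$ (with $\xi_k=w_kg/(1-g)$) is compatible with \eqref{eq:udk-1}--\eqref{eq:udk-3b}, so the lower bound can fail. The standing assumption $m_k>0$ is therefore essential.
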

\begin{proof}
We use \eqref{Lformula} with  the characteristic function $\chi_k$ of the set $\{x \in \Gamma, t>0 \,|\, u_k(x,t)>0\}$. 
Together with \eqref{Lmonotoneh}, \eqref{Lmonotones} and $L_h(h)=1$ it follows that 
\begin{align*}
 w_k&= L_{g\chi_k} \big( (1-g)\chi_k\big)  \geq L_{\gmax \chi_k} \big( (1-\gmax) \chi_k\big)\\
 &= \frac{1}{\gmax} L_{\gmax \chi_k} (\gmax \chi_k) - L_{\gmax \chi_k}(\gmax \chi_k)= \frac{1}{\gmax} -1 = \alpha_*\,.
\end{align*}
The upper bound follows analogously.
\end{proof}

We first observe that $u_k$ vanishes in the limit $k\to\infty$.
\begin{lemma} \label{L.uktozero}
 We have $u_k(\cdot,t) \to 0$ in $L^1(\Gamma)$ for all $t>0$ and $u_k \weakto 0$ in $W^{2,1}_2(\Gamma\times (\delta,T))$ for any $0<\delta<T$.
\end{lemma}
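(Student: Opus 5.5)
The first claim is immediate from the sign condition and mass conservation. Since $u_k\geq 0$, \eqref{eq:masscons} gives
\begin{equation*}
  \|u_k(\cdot,t)\|_{L^1(\Gamma)}=\int_\Gamma u_k(\cdot,t)\,d\sigma = m_k\to 0
\end{equation*}
for every $t>0$, by \eqref{eq:uk-5}. So the real content is the weak convergence in $W^{2,1}_2(\Gamma\times(\delta,T))$. For this I will prove a uniform bound in that space. Weak compactness then gives subsequential weak limits. Every such limit must vanish, because $u_k\to 0$ in $L^1(\Gamma\times(\delta,T))$. Hence the whole sequence converges weakly to $0$.

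For the uniform bound, I view \eqref{eq:udk-1} as a heat equation $\partial_t u_k-\Delta u_k=f_k$ with right-hand side $f_k:=-(1-g)\xi_k+w_kg$. Since $0\leq\xi_k\leq 1$ and Lemma \ref{L.aprioriwk} gives $\alpha_*\leq w_k\leq\alpha^*$, we get
\begin{equation*}
  \|f_k\|_{L^\infty(\Gamma\times(0,T))}\leq C(T),
\end{equation*}
uniformly in $k$. Here $\alpha^*(t)\leq (1-\gbarmin)/\gbarmin$ is bounded independently of $t$.

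The remaining task, and the main obstacle, is that the initial data $u_{0,k}$ are only controlled in $L^1$ (through $m_k$), not in $H^1$ uniformly. I will therefore localize in time.
\begin{enumerate}
\item Take a cutoff $\eta\in C^\infty([0,T])$ with $\eta=0$ on $[0,\delta/2]$, $\eta=1$ on $[\delta,T]$, and $0\le\eta\le1$.
\item The function $v_k:=\eta u_k$ solves
\begin{equation*}
  \partial_t v_k-\Delta v_k=\eta f_k+\eta' u_k,\qquad v_k(\cdot,0)=0 .
\end{equation*}
\item Maximal $L^2$-regularity for the heat equation on the closed surface $\Gamma$ then gives
\begin{equation*}
  \|v_k\|_{W^{2,1}_2(\Gamma\times(0,T))}\leq C\bigl(\|f_k\|_{L^2}+\|\eta' u_k\|_{L^2(\Gamma\times(\delta/2,T))}\bigr).
\end{equation*}
\end{enumerate}

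To bound $u_k$ in $L^2(\Gamma\times(\delta/2,T))$ uniformly, I use the Duhamel representation $u_k(t)=e^{t\Delta}u_{0,k}+\int_0^te^{(t-s)\Delta}f_k(s)\,ds$. This is legitimate because the equation is linear once $f_k$ is regarded as given. The smoothing estimate $\|e^{t\Delta}\|_{L^1\to L^\infty}\leq Ct^{-1}$ on the two-dimensional surface $\Gamma$ gives
\begin{equation*}
  \|u_k(t)\|_{L^\infty}\leq C\,m_k\,t^{-1}+C(T)\,t ,
\end{equation*}
which is uniformly bounded for $t\geq\delta/2$. This closes the $W^{2,1}_2$ bound on $(\delta,T)$, because $v_k=u_k$ there.

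Alternatively, one could test the equation with $\eta^2 u_k$ to get $L^2$ and $H^1$ bounds from $L^1$ via interpolation. The Duhamel route seems cleanest.
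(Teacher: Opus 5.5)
Your proof is correct and follows the paper's route. The $L^1$ claim comes from nonnegativity and mass conservation. A uniform $W^{2,1}_2(\Gamma\times(\delta,T))$ bound then follows from parabolic regularity, since the right-hand side is bounded via Lemma~\ref{L.aprioriwk}, and every weak limit is identified as zero. Your time cutoff and $L^1\to L^\infty$ heat-kernel smoothing only make explicit the ``standard parabolic regularity'' step that the paper cites, and they correctly handle initial data controlled only in $L^1$.
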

\begin{proof}
 We first notice that since $u_k$ is nonnegative we obtain from mass conservation and $m_k \to 0$ that $u_k(\cdot,t) \to 0$ in $L^1(\Gamma)$ for all $t \geq 0$. By standard parabolic regularity, since the right-hand side of \eqref{eq:udk-1} is due to Lemma \ref{L.aprioriwk} uniformly bounded in $L^{\infty}((0,\infty), L^2(\Gamma))$, it follows that $u_k$ is uniformly bounded in $W^{2,1}_2(\Gamma \times (\delta,T))$ for any $0<\delta<T$. Thus there exists a weakly convergent subsequence in this space. Since $u_k(\cdot,t)$ converges to zero in $L^1(\Gamma)$, the limit is zero and the whole sequence converges.
\end{proof}

\bigskip
\begin{proposition}\label{P.wkconv}
It holds
\begin{align}
  w_k &\to \alpha_* \quad\textrm{ in }L^p(\Gamma_T)\textrm{ for any }T>0\,,p \in [1,\infty)\,.\label{eq:WkConv1}
\end{align}
\end{proposition}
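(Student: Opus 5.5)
The plan is to identify the weak limit of $(w_k)$ and then upgrade to strong convergence using the one-sided bound $w_k\ge\alpha_*$ from Lemma \ref{L.aprioriwk}. No compensated compactness should be needed.

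Write $f_k:=-(1-g)\xi_k+w_kg$ for the common right-hand side of \eqref{eq:udk-1} and \eqref{eq:udk-3b}. By Lemma \ref{L.aprioriwk} and $0\le\xi_k\le1$, the sequences $(w_k)$, $(\xi_k)$, $(f_k)$ are uniformly bounded in $L^\infty(\Gamma_T)$. Moreover, $w_k(\cdot,t)$ is harmonic in $\Omega$ with $-\partial_n w_k=f_k$. Testing with $w_k$ gives $\int_\Omega|\nabla w_k|^2=-\int_\Gamma w_kf_k\le C$, so $(w_k)$ is bounded in $L^2(0,T;H^1(\Omega))$. Hence, along a subsequence,
\[
w_k\weakto w,\qquad \xi_k\weakto\xi,\qquad f_k\weakto f ,
\]
weakly-$\ast$ in $L^\infty(\Gamma_T)$ and, for $w_k$, weakly in $L^2(0,T;H^1(\Omega))$. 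Weak limits preserve the convex constraints, so $\alpha_*\le w\le\alpha^*$ and $0\le\xi\le1$. By linearity, $f=-(1-g)\xi+wg$.

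Next I identify $f$. By Lemma \ref{L.uktozero}, $u_k\weakto0$ in $W^{2,1}_2(\Gamma\times(\delta,T))$, so $f_k=\partial_tu_k-\Delta u_k\weakto0$ in $L^2(\Gamma\times(\delta,T))$ for every $\delta>0$; thus $f=0$ a.e. on $\Gamma_T$. Passing to the limit in the weak formulation of \eqref{eq:udk-3a}--\eqref{eq:udk-3b}, for test functions $\varphi(x,t)$ in $\Omega\times(0,T)$,
\[
\int_0^T\!\!\int_\Omega\nabla w_k\cdot\nabla\varphi=-\int_0^T\!\!\int_\Gamma f_k\varphi .
\]
In the limit this shows that for a.e.\ $t$ the function $w(\cdot,t)$ is harmonic in $\Omega$ with homogeneous Neumann data. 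Since $\Omega$ is connected, $w(\cdot,t)\equiv c(t)$ is constant in space.

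Now use $f=0$, i.e.\ $c(t)g=(1-g)\xi\le 1-g$, so $c(t)\le\frac{1-g(x,t)}{g(x,t)}$ for a.e.\ $x$. Evaluating near a maximizer of $g(\cdot,t)$, using continuity of $g$, gives $c(t)\le\alpha_*(t)$. Combined with $w\ge\alpha_*$ this yields $w=\alpha_*$. Since the limit is independent of the subsequence, the whole sequence satisfies $w_k\weakto\alpha_*$.

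Finally, since $w_k-\alpha_*\ge0$ by Lemma \ref{L.aprioriwk}, we get
\[
\|w_k-\alpha_*\|_{L^1(\Gamma_T)}=\int_{\Gamma_T}(w_k-\alpha_*)\to0 ,
\]
by testing the weak convergence with the constant function $1$. Interpolating with the uniform $L^\infty$ bound $|w_k-\alpha_*|\le\alpha^*-\alpha_*$ then gives convergence in every $L^p(\Gamma_T)$ with $p<\infty$.

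The step that needs the most care is the passage to the limit in the Neumann problem to conclude that $w(\cdot,t)$ is spatially constant. This requires the $L^2H^1(\Omega)$ bound and the fact that the weak limit of the boundary traces coincides with the trace of the limit. The bound follows from the energy identity above, and the trace statement holds because the trace operator is continuous and linear, hence weakly continuous.
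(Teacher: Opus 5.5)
Your proof is correct and follows essentially the same route as the paper. Both arguments use the uniform $H^1(\Omega)$ bound, show the limit is harmonic with zero Neumann data (because the boundary flux equals $\partial_t u_k-\Delta u_k\rightharpoonup 0$) and hence constant in space, bound this constant above by $\alpha_*$ via the maximizer of $g$, and upgrade to strong convergence using $w_k\ge\alpha_*$. The only cosmetic difference is in the upper bound: you pass to the limit in $\xi_k$ and read off $c(t)g=(1-g)\xi\le 1-g$ a.e. (using Fubini to fix a.e.\ $t$), whereas the paper applies $\xi_k\le 1$ before taking the limit and localizes with a Lebesgue-point argument in time.
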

\begin{proof}
We fix $T>0$.
By testing \eqref{eq:udk-3a}-\eqref{eq:udk-3b} with $w_k$ and using the lower bound on $g$ we easily find that 
\begin{equation}\label{eq:nablawkbound}
  \sup_{t>0}\| w_k(\cdot,t)\|_{H^1(\Omega)} \leq C(g)\,.
\end{equation}
For a subsequence we then have weak convergence in $L^2(0,T;H^1(\Omega))$ to some limit $\overline{w}$.
Due to Lemma \ref{L.aprioriwk}, the continuity of the trace operator, and an identification argument we deduce that in addition 
\begin{equation} \label{wkconv}
 w_k \weakstarto \overline{w} \qquad \textrm{ in } L^\infty(\Gamma_T)
\end{equation}
holds for the traces of $w_k,\overline{w}$.
We are going to show that $\overline{w}=\alpha_*$, hence it suffices to consider the chosen subsequence.

\begin{steps}
\item For almost all $t>0$
\begin{equation*}
 \overline{w}(\cdot,t) = \overline{w}(t)\quad\textrm{ is constant in }\Omega\,.
\end{equation*}
Since we can write the right hand side of \eqref{eq:udk-3b} as $\partial_t u_k - \Delta u_k$, we see from the weak formulation of \eqref{eq:udk-3a}-\eqref{eq:udk-3b} and Lemma \ref{L.uktozero} that $\overline{w}$ is harmonic inside $\Omega$ with zero Neumann boundary data.
Hence the claim follows. 

\item We have 
\begin{equation}\label{limsupwk}
 \overline{w}(t)  \leq \alpha_*(t) \qquad \textrm{ for a.a. } t>0\,.
\end{equation}

To prove this claim we first multiply equation \eqref{eq:udk-1} with a smooth, nonnegative test function $\varphi$ with compact support in $\Gamma\times(0,T)$ to obtain
\begin{align*}
 0&= \int_0^{\infty} \int_{\Gamma} u_k \big( \Delta \varphi + \partial_t\varphi\big) - \big( ( 1-g)\xi_k - g w_k)\big)\varphi \,d\sigma\,dt\\
 &\geq \int_0^{\infty} \int_{\Gamma} u_k \big( \Delta \varphi + \partial_t\varphi\big) - \big((1-g) - gw_k \big) \varphi\,d\sigma\,dt \,.
\end{align*}
The first term on the right hand side converges to zero due to Lemma \ref{L.uktozero}.
Hence we find in the limit $k \to \infty$ that 
\begin{equation}
  0 \leq \int_0^T \int_{\Gamma} \big(1-(1+\overline{w})g\big)\varphi\,d\sigma\,dt\,.
  \label{eq:22a}
\end{equation}
Next fix an arbitrary Lebesgue point $t_0>0$ of $\overline{w}-\alpha_*$.
For any given $\eps>0$ choose $x_0\in\Gamma$ with $g(x_0,t_0)>\gmaxt(t_0)-\eps$. 
By continuity of $g,\gmaxt$ there exists $\delta=\delta(\eps)$ with $0<\delta<\min\{t_0,T-t_0\}$ and
\begin{equation}
  g\geq \gmaxt-\eps \quad\textrm{ in }B(x_0,\delta)\times (t_0-\delta,t_0+\delta)\,.
  \label{eq:EstGmaxEps}
\end{equation}
Now consider any nonnegative $\eta\in C^\infty_c((-1,1))$, $\int_\R \eta=1$, let $\eta_\eps=\frac{1}{\delta}\eta\big(\frac{\cdot-t_0}{\delta}\big)$ and choose an arbitrary nontrivial, nonnegative test function $\psi_\eps\in C^\infty(\Gamma)$ with $\spt \psi_\eps\subset B(x_0,\delta)$.

Setting $\varphi_\eps(x,t)=\eta_\eps(t)\psi_\eps(x)$ and using $1 = (1+\alpha_*) \gmaxt$ we obtain from \eqref{eq:22a} that
\begin{align}
  0 &\leq  \int_0^\infty\int_\Gamma \big(1-(1+\overline{w})(\gmaxt-\eps)\big)\varphi_\eps\,d\sigma \,dt\nonumber\\
  &= \int_\Gamma\psi_\eps\,d\sigma \int_0^\infty\eta_\eps(t) \big((1+\overline{w})\eps +(\alpha_*-\overline{w})\gmaxt\big)\,dt\,,
\end{align}
which yields after dividing by $\int_\Gamma\psi_\eps$, by letting $\eps\to 0$, $\delta=\delta(\eps)\to 0$, and by using that $t_0$ is a Lebesgue point of $\overline{w}-\alpha_*$ and that $\gmaxt$ is continuous and positive
\begin{equation*}
  \overline{w}(t_0)-\alpha_*(t_0) \leq 0\,.
\end{equation*}
Since almost all $t_0\in (0,T)$ are Lebesgue points of $\overline{w}-\alpha_*$, we conclude that \eqref{limsupwk} holds.

\item We have
\begin{equation}\label{liminfwk}
 \overline{w}(t) = \alpha_*(t) \qquad \textrm{ for a.a. } t>0\,.
\end{equation}
Indeed, the inequality $\overline{w} \geq \alpha_*$ follows directly from the lower bound in Lemma \ref{L.aprioriwk}.
By the previous step \eqref{liminfwk} holds.

\item Finally, since $w_k\geq \alpha_*$ for all $k\in\N$ by Lemma \ref{L.aprioriwk} we deduce from the weak* convergence in $L^\infty$ that for any $T>0$
\begin{equation*}
  0 = \lim_{k\to\infty}\int_0^T\int_\Gamma (w_k -\alpha_*)\,d\sigma\,dt 
  =\lim_{k\to\infty} \| w_k -\alpha_*\|_{L^1(\Gamma_T)}\,, 
\end{equation*}
which gives the $L^1_{\loc}$-convergence of $w_k$ to $\alpha_*$.
Then, since $(w_k)$ is also uniformly bounded due to Lemma \ref{L.aprioriwk} and since the limit is a priori given we also get convergence in $L^p(\Gamma_T)$ for any $p \in [1,\infty)$ and the whole sequence $k\to\infty$.

\end{steps}
\end{proof}

The next result shows that in a mild sense the support of $u_k(\cdot,t)$ has to concentrate on the sets where $g(\cdot,t)$ takes its maximum.
\begin{proposition}[Localization]\label{P.localization}
With $k\to\infty$ we have
\begin{equation}
  \Big(t\mapsto\fint\limits_{\{u_k(\cdot,t)>0\}} \big(\gmaxt(t)-g(\cdot,t)\big)\,d\sigma\,dt\Big)\,\to\, 0 \quad\textnormal{ in }L^1_{\loc}([0,\infty))\,.
  \label{eq:ConvAverage}
\end{equation}
\end{proposition}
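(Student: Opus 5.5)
The plan is to obtain, for almost every $t$, an exact identity that expresses the average of $\gmaxt-g$ over the support of $u_k(\cdot,t)$ through the deviation $w_k-\alpha_*$ on that support. Proposition \ref{P.wkconv} then closes the argument, once it is upgraded to uniform-in-space convergence. Throughout, write $S_k(t):=\{u_k(\cdot,t)>0\}$ and $\chi_k$ for its characteristic function. For times with $|S_k(t)|=0$ the integrand in \eqref{eq:ConvAverage} is read as $0$; since $u_k(\cdot,t)$ has mass $m_k>0$, this does not occur anyway.

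\emph{Step 1 (balance identity).} By \eqref{Lformula}, $w_k(\cdot,t)$ is harmonic in $\Omega$ and satisfies $\partial_n w_k + g\chi_k w_k = (1-g)\chi_k$ on $\Gamma$. Integrating the Neumann data over $\Gamma$ and using $\int_\Gamma \partial_n w_k\,d\sigma=0$ gives
\begin{equation*}
  \int_{S_k(t)} \big(1-(1+w_k)g\big)\,d\sigma = 0 .
\end{equation*}
Inserting $1=(1+\alpha_*)\gmaxt$ and dividing by $|S_k(t)|$, we obtain
\begin{equation*}
  (1+\alpha_*(t))\fint_{S_k(t)}(\gmaxt-g)\,d\sigma = \fint_{S_k(t)} (w_k-\alpha_*)\,g\,d\sigma \;\le\; \|w_k(\cdot,t)-\alpha_*(t)\|_{L^\infty(\Gamma)} .
\end{equation*}
Here we used $0<g<1$ and $w_k\ge\alpha_*$ from Lemma \ref{L.aprioriwk}. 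Since $1+\alpha_*\ge 1$, the left-hand side is nonnegative and bounded by a constant. It therefore suffices to show that $\|w_k(\cdot,t)-\alpha_*(t)\|_{L^\infty(\Gamma)}\to 0$ in $L^1(0,T)$.

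\emph{Step 2 (uniform Hölder bound).} This is the main obstacle, because Proposition \ref{P.wkconv} only provides $L^p$ convergence, whereas the average is taken over possibly very small sets $S_k(t)$. I will show that $w_k(\cdot,t)$ is bounded in $C^{0,\frac12}(\Gamma)$ uniformly in $k$ and $t$. The Neumann datum $\partial_n w_k = (1-g)\chi_k - g\chi_k w_k$ is bounded in $L^\infty(\Gamma)\subset L^4(\Gamma)$ uniformly, by Lemma \ref{L.aprioriwk} and $\alpha^*\le (1-\gbarmin)/\gbarmin$. The Neumann-to-Dirichlet map $L^4(\Gamma)\to W^{1,4}(\Gamma)$ from Remark \ref{rem:Lh} controls $w_k(\cdot,t)$ modulo constants, and the $L^\infty$ bound controls the constant. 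Together with $W^{1,4}(\Gamma)\embeds C^{0,\frac12}(\Gamma)$, this gives $[w_k(\cdot,t)]_{C^{0,1/2}}\le C$. The point is that this avoids the continuity of $L_h$ for a fixed $h$, which would not be uniform as $h=g\chi_k$ varies.

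\emph{Step 3 (interpolation and conclusion).} For a function $f$ on the compact surface $\Gamma$ with $[f]_{C^{0,1/2}}\le C$, one has $\|f\|_{L^\infty}\le C'\|f\|_{L^1}^{\theta}$ for some $\theta\in(0,1)$ (here $\theta=1/5$ in dimension two), up to a term controlled by $\|f\|_{L^1}$. This follows by comparing $\sup|f|$ with its average over a geodesic ball of radius $r\sim\|f\|_{L^1}^{2/5}$. Applying this to $f=w_k(\cdot,t)-\alpha_*(t)$, integrating in $t$ over $(0,T)$ and using Jensen's inequality gives
\begin{equation*}
  \int_0^T\|w_k-\alpha_*\|_{L^\infty(\Gamma)}\,dt \le C\, T^{1-\theta}\,\|w_k-\alpha_*\|_{L^1(\Gamma_T)}^{\theta} + C\,\|w_k-\alpha_*\|_{L^1(\Gamma_T)}\to 0 ,
\end{equation*}
by Proposition \ref{P.wkconv}. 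Combined with Step 1, this yields \eqref{eq:ConvAverage} for the whole sequence, with no subsequence extraction needed.
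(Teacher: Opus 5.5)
Your proposal is correct and follows essentially the paper's route. You use the same balance identity $(1+\alpha_*)\fint_{S_k(t)}(\gmax-g)\,d\sigma=\fint_{S_k(t)}(w_k-\alpha_*)g\,d\sigma$; the paper gets it by integrating the $u_k$-equation over $\{u_k>0\}$, you via the Neumann problem for $w_k$ and \eqref{Lformula}. You then upgrade the $L^1$-convergence of $w_k-\alpha_*\geq 0$ to smallness of its spatial supremum through a uniform $C^{0,\frac12}$ bound, and your interpolation $\|f\|_{L^\infty}\lesssim\|f\|_{L^1}^{1/5}$ plus Jensen is just a quantitative form of the paper's estimate $\int_0^T\int_\Gamma\delta_k\,d\sigma\,dt\geq C\eps^5|I_k^\eps|$ (incidentally, the uniform Hölder bound also follows directly from $w_k=L_g((1-g)\xi_k)$ with the fixed weight $h=g$, so varying $h=g\chi_k$ is not really an issue).
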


\begin{proof}
Since $u_k\in W^{2p,p}_2(\Gamma\times\R^+)$ for any $1\leq p<\infty$ we obtain from \eqref{eq:udk-1}-\eqref{eq:udk-4} for almost all $t>0$ that
\begin{align*}
  0 &= \int_{\{u_k(\cdot,t)>0\}} \partial_t u_k(\cdot,t) -\Delta u_k(\cdot,t)\,d\sigma\\
  &= -\int_{\{u_k(\cdot,t)>0\}} \big(1-g(\cdot,t)\big)\,d\sigma + \int_{\{u_k(\cdot,t)>0\}}w_k(\cdot,t)g(\cdot,t)\,d\sigma\,.
\end{align*}
Using $1=(1+\alpha_*)\gmaxt$ we deduce for any $T>0$ that
\begin{align}
  0&=\int_{0}^{T}\fint_{\{u_k(\cdot,t)>0\}} (1+\alpha_*(t))\gmaxt(t)- (1+w_k(\cdot,t))g(\cdot,t)\,d\sigma\,dt\nonumber\\
  &= \int_{0}^{T}\fint\limits_{\{u_k(\cdot,t)>0\}} (1+\alpha_*(t))\big(\gmaxt(t)-g(\cdot,t)\big)\,d\sigma\,dt 
  - \int_{0}^{T}\fint\limits_{\{u_k(\cdot,t)>0\}}\big(w_k(\cdot,t)-\alpha_*(t)\big)g(\cdot,t)\,d\sigma\,dt\nonumber\\
  &\geq \int_{0}^{T}\fint\limits_{\{u_k(\cdot,t)>0\}} \big(\gmaxt(t)-g(\cdot,t)\big)\,d\sigma\,dt 
  - \int_{0}^{T}\gmax(t)\fint\limits_{\{u_k(\cdot,t)>0\}}\big(w_k(\cdot,t)-\alpha_*(t)\big)\,d\sigma\,dt\,,
  \label{eq:EstWk}
\end{align}
where we have used that $w_k\geq\alpha_*$ by Lemma \ref{L.aprioriwk}.

We denote 
\begin{equation*}
  \delta_k(x,t) := w_k(x,t)-\alpha_*(t) \geq 0\,.
\end{equation*}
We know from Proposition \ref{P.wkconv} that $\delta_k$ is uniformly bounded in $L^{\infty}(\Gamma_T)$ and converges to zero in $L^p(\Gamma_T)$ for any $p \in [1,\infty)$. 
By \eqref{eq:nablawkbound} and the embedding \eqref{eq:LhCont} we have that $\delta_k(\cdot,t) \in C^{0,\frac 1 2 } (\Gamma)$ for almost all $t\in (0,T)$ with bounds that are uniform in $t$.

We fix $\eps>0$ und define
\begin{equation*}
  I_k^{\eps}:=\{ t \in [0,T]\,|\, \max_{ x \in \Gamma} \big(\delta_k(x,t)\big) \geq \eps \}\,.
\end{equation*}
Then let $x^0_k$ be such that 
\begin{equation*}
  \delta_k\big(x^0_k(t),t)\big) = \max_{x \in \Gamma}\big( \delta_k(x,t)\big)\,.
\end{equation*}
Due to the uniform H\"older regularity of $\delta_k(\cdot,t)$ we find that there is $C>0$ such that $\delta_k(x,t) \geq \frac{\eps}{2}$ for $x \in \Gamma$ with $|x-x^0_k| \leq C\eps^2$.
With the bound
\begin{align*}
  \int_0^T \int_{\Gamma} \delta_k(x,t)\,d\sigma\,dt \geq \int_{I^{\eps}_k} \int_{B_{C\eps^2}(x^0_k)} \delta_k(x,t)\,d\sigma\,dt \geq C \eps^{5} |I_k^{\eps}|
\end{align*}
we find that for fixed $\eps>0$ it holds $|I^{\eps}_k| \to 0$ as $k \to \infty$. 
Then
\begin{align*}
  &\int_{0}^{T}\gmax(t)\fint\limits_{\{u_k(\cdot,t)>0\}}\delta_k(\cdot,t)\,d\sigma\,dt\\
  &\qquad = \int_{I^{\eps}_k} \gmax(t)
  \fint\limits_{\{u_k(\cdot,t)>0\}} \delta_k(\cdot,t)\,d\sigma \,dt + \int_{[0,T]\backslash I^{\eps}_k} \gmax(t)
  \fint\limits_{\{u_k(\cdot,t)>0\}} \delta_k \,d\sigma\,dt\leq C \big( |I^{\eps}_k| + \eps\big)\,.
\end{align*}
Letting first $k \to \infty$ and then $\eps \to 0$ shows that
\begin{align*}
  \lim_{k\to\infty}\int_{0}^{T}\gmax(t)\fint\limits_{\{u_k(\cdot,t)>0\}}\delta_k(\cdot,t)\,d\sigma\,dt=0\,.
\end{align*}
Together with \eqref{eq:EstWk} this proves the claim.

\end{proof}

\bigskip
For a sharper result on the concentration property we use an additional assumption on the initial data, that is
\begin{equation}
  \int_\Gamma \frac{1}{m_k}(u_{0,k})^2 \,d\sigma \to 0 \quad\textrm{ as }k\to\infty\,.
  \label{eq:Ass2U0}
\end{equation}
Note that this is a rather mild assumption, which is in particular satisfied for initial data of the form $u_{0,k} = \frac{1}{k}u^0$ for some fixed $u^0\in L^2(\Gamma)$.

\bigskip
\begin{theorem}[Localization II]\label{T.localization}
Assume \eqref{eq:Ass2U0}.
Then the functions $v_k=\frac{1}{m_k}u_k$ converge for suitable subsequences weakly* in $L^\infty_{\omega *}(\R^+;\calM_b(\Gamma))$ to a measure $\mu$, represented as $d\mu (x,t)=d\mu_t(x)\,dt$ with $\mu_t(\Gamma)=1$.

Any such measure $\mu$ is concentrated on the set where $g(\cdot,t)$ takes its maximum, i.e.
\begin{equation}
  \mu_t(\{g(\cdot,t)<\gmaxt(\cdot,t)\})=0\quad\textrm{ for a.e. }t>0\,.
  \label{eq:ConcMu}
\end{equation}
\end{theorem}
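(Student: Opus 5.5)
Compactness comes first. Since $v_k\geq 0$ and $\int_\Gamma v_k(\cdot,t)\,d\sigma=1$ for all $t$ by \eqref{eq:masscons}, the family $(v_k)_k$ is bounded in $L^\infty(\R^+;\calM_b(\Gamma))$, which is the dual of $L^1(\R^+;C^0(\Gamma))$. Banach--Alaoglu gives a weak* convergent subsequence with limit $\mu\in L^\infty_{\omega*}(\R^+;\calM_b(\Gamma))$, and $\mu_t\geq 0$. Testing with $\varphi(x,t)=\eta(t)$, $\eta\in L^1(\R^+)$, yields $\int \eta\,\mu_t(\Gamma)\,dt=\int\eta\,dt$, so $\mu_t(\Gamma)=1$ for a.e.\ $t$. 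This part is routine.

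For \eqref{eq:ConcMu}, the idea is to use the $L^2$-energy. Testing \eqref{eq:udk-1} with $u_k$ and using $\xi_k u_k=u_k$ gives
\begin{equation*}
 \frac{1}{2}\frac{d}{dt}\int_\Gamma u_k^2\,d\sigma+\int_\Gamma|\nabla u_k|^2\,d\sigma=\int_\Gamma\big(w_kg-(1-g)\big)u_k\,d\sigma .
\end{equation*}
Dividing by $m_k$ and integrating over $(0,T)$ gives
\begin{equation*}
 \int_0^T\!\!\int_\Gamma \big(1-(1+w_k)g\big)v_k\,d\sigma\,dt\leq \frac{1}{2m_k}\int_\Gamma u_{0,k}^2\,d\sigma \to 0
\end{equation*}
by \eqref{eq:Ass2U0}, after dropping the nonnegative gradient term and the term $\frac{1}{2m_k}\int u_k(T)^2$. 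Now write $1-(1+w_k)g=(1+\alpha_*)(\gmax-g)-(w_k-\alpha_*)g$, using $1=(1+\alpha_*)\gmax$. With $\delta_k=w_k-\alpha_*\geq 0$ this becomes
\begin{equation*}
 \int_0^T\!\!\int_\Gamma(1+\alpha_*)(\gmax-g)\,v_k\,d\sigma\,dt\leq o(1)+\int_0^T\!\!\int_\Gamma \delta_k g\, v_k\,d\sigma\,dt .
\end{equation*}

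The main obstacle is the last term, a product of the weakly converging measures $v_k$ with $\delta_k$, where $\delta_k\to 0$ only in $L^p(\Gamma_T)$. I would handle it as in the proof of Proposition \ref{P.localization}. For each $t$, $\int_\Gamma\delta_k v_k\,d\sigma\leq \max_\Gamma\delta_k(\cdot,t)$, because $\int_\Gamma v_k(\cdot,t)\,d\sigma=1$. Fix $\eps>0$ and split $(0,T)$ into $I^\eps_k$ and its complement. By the uniform $C^{0,1/2}$ bound from \eqref{eq:LhCont} we have $|I_k^\eps|\to0$, and $\delta_k$ is bounded. Hence the term is at most $C(|I^\eps_k|+\eps)$, which tends to $0$ as $k\to\infty$ and then $\eps\to0$. (Lemma \ref{L.wlvklimit} on compensated compactness might be intended here, but the uniform-in-space bound suffices.)

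To pass to the limit, note that $(1+\alpha_*)(\gmax-g)$ is continuous and nonnegative, so weak* convergence gives $\int_0^T\int_\Gamma(1+\alpha_*)(\gmax-g)\,d\mu_t\,dt\leq 0$. Since the integrand is nonnegative, it vanishes $\mu$-a.e. Because $1+\alpha_*>0$, it follows that $\mu_t(\{g(\cdot,t)<\gmax(t)\})=0$ for a.e.\ $t\in(0,T)$. As $T$ is arbitrary, this proves the claim.
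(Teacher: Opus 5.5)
Your proposal is correct and follows the paper's proof essentially step by step. The shared steps are the weak* compactness in $L^1(\R^+;C^0(\Gamma))^*$, the $L^2$-energy test combined with \eqref{eq:Ass2U0}, and the $I^\eps_k$-splitting that controls $\int_0^T\int_\Gamma \delta_k g v_k\,d\sigma\,dt$. That splitting is exactly how the paper proves Lemma \ref{L.wlvklimit}, so your argument is that lemma rather than an alternative to it. Your last step, which uses $(1+\alpha_*)(\gmax-g)=1-g/\gmax\geq 0$ directly, is only a compact rewriting of the paper's sandwich $\int_0^T\int_\Gamma 1\,d\mu_t\,dt\leq\int_0^T\int_\Gamma g/\gmax\,d\mu_t\,dt\leq\int_0^T\int_\Gamma 1\,d\mu_t\,dt$.
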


We first prepare the proof by a convergence result for the product $w_kv_k$.
\begin{lemma}\label{L.wlvklimit}
Suppose that the assumptions and definitions of Theorem \ref{T.localization} hold.
Then
\begin{equation}\label{eq:wkvklimit}
 \int_0^T \int_{\Gamma} (w_k-\alpha_*) g v_k\,d\sigma \,dt \to 0 \qquad \textrm{ as } k \to \infty\,.
\end{equation}
\end{lemma}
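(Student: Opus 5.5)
The plan is to reuse the mechanism from the proof of Proposition \ref{P.localization}. The key observation is that $v_k(\cdot,t)$ is a nonnegative function of unit mass for every $t$. Hence the integrand can be controlled pointwise in time by the spatial maximum of $\delta_k:=w_k-\alpha_*\geq 0$. No further information on $v_k$ is needed beyond mass conservation. In particular, \eqref{eq:Ass2U0} would not enter this lemma and would only be used later in the proof of Theorem \ref{T.localization}.

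\emph{Step 1: pointwise-in-time reduction.} For almost every $t\in(0,T)$, Lemma \ref{L.aprioriwk} gives $\delta_k\geq0$. Together with $v_k\geq0$, $\int_\Gamma v_k(\cdot,t)\,d\sigma=1$ by \eqref{eq:masscons}, and $g\leq\gbarmax$, this yields
\begin{equation*}
 0\leq \int_\Gamma \delta_k(\cdot,t)\,g(\cdot,t)\,v_k(\cdot,t)\,d\sigma \leq \gbarmax \max_{x\in\Gamma}\delta_k(x,t)\,.
\end{equation*}
So it suffices to show that $t\mapsto\max_\Gamma\delta_k(\cdot,t)$ tends to zero in $L^1(0,T)$. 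This sequence is uniformly bounded, by Lemma \ref{L.aprioriwk} and the boundedness of $\alpha^*-\alpha_*$ coming from $\gbarmin\leq g\leq\gbarmax$.

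\emph{Step 2: uniform Hölder bound.} By \eqref{eq:Lhw}, $w_k(\cdot,t)=L_g((1-g)\xi_k(\cdot,t))$ with $0\leq\xi_k\leq1$. Since $\alpha_*(t)$ is constant in space, \eqref{eq:LhCont} gives $\|\delta_k(\cdot,t)\|_{C^{0,1/2}(\Gamma)}\leq C$ for almost all $t$, with $C$ independent of $k$ and $t$.

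\emph{Step 3: smallness of the maximum.} Fix $\eps>0$ and let $I_k^\eps:=\{t\in[0,T]:\max_\Gamma\delta_k(\cdot,t)\geq\eps\}$. For $t\in I_k^\eps$, let $x_k^0(t)$ be a maximum point of $\delta_k(\cdot,t)$. The Hölder bound gives $\delta_k(\cdot,t)\geq\eps/2$ on the ball $B_{c\eps^2}(x_k^0(t))$. Integrating over $I_k^\eps$ and using that $\Gamma$ is two-dimensional gives
\begin{equation*}
 \|\delta_k\|_{L^1(\Gamma_T)}\geq c\,\eps^5\,|I_k^\eps|\,.
\end{equation*}
Proposition \ref{P.wkconv} then implies $|I_k^\eps|\to0$. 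Consequently
\begin{equation*}
 \int_0^T\max_\Gamma\delta_k\,dt\leq C|I_k^\eps|+\eps T\,.
\end{equation*}
Letting first $k\to\infty$ and then $\eps\to0$ proves \eqref{eq:wkvklimit}.

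The only delicate point is Step 2. One has to make sure that the Hölder constant of $w_k(\cdot,t)$ on $\Gamma$ is uniform in both $k$ and $t$. This needs the $L^4(\Gamma)$ bound on the data $(1-g)\xi_k$, which follows from $0\leq\xi_k\leq1$, and also that $L_g$ is controlled uniformly since $g\geq\gbarmin$.

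Should this uniformity fail, the fallback is a compensated-compactness argument. One would pair the spatial compactness of $\delta_k$, bounded in $L^\infty(0,T;W^{1,4}(\Gamma))$, with time-equicontinuity of $v_k$ in a negative Sobolev space. The difficulty there is the source term $m_k^{-1}\bigl(-(1-g)\xi_k+gw_k\bigr)$, which is not uniformly bounded as $k\to\infty$. This is the point where the energy estimate obtained by testing with $u_k/m_k$, and hence \eqref{eq:Ass2U0}, would be used.
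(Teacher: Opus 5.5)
Your proposal is correct and follows the paper's proof. You bound the integrand using $v_k\geq 0$, $\int_\Gamma v_k(\cdot,t)\,d\sigma=1$ and the uniform boundedness of $\delta_k$, then split $[0,T]$ into $I_k^\eps$ and its complement, with $|I_k^\eps|\to 0$ coming from the uniform H\"older bound and the $L^1$ convergence already established in the proof of Proposition \ref{P.localization}. The compensated-compactness fallback is not needed, and you are right that \eqref{eq:Ass2U0} plays no role in this lemma.
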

\begin{proof}
We consider $\eps>0$, $k\in\N$ and $\delta_k,I^{\eps}_k$ as in the proof of Proposition \ref{P.localization}
Recalling that $\delta_k$ is uniformly bounded we deduce
\begin{align*}
  \int_0^T \int_{\Gamma} \delta_k g v_k\,d\sigma\,dt
  &= \int_{I^{\eps}_k} \int_{\Gamma} \underbrace{\delta_k g}_{\leq C} v_k\,d\sigma\,dt + \int_{[0,T]\backslash I^{\eps}_k} \int_{\Gamma} \underbrace{\delta_k g}_{\leq C\eps} v_k\,d\sigma\,dt\leq C \big( |I^{\eps}_k| + \eps\big)\,.
\end{align*}
Since $|I^{\eps}_k| \to 0$ with $k\to\infty$,
letting first $k \to \infty$ and then $\eps \to 0$ proves the claim.

\end{proof}

\begin{proof}[Proof of Theorem \ref{T.localization}]
By definition it holds $v_k\geq 0$ and
\begin{equation*}
  \int_\Gamma v_k(\cdot,t)\,d\sigma = 1\quad\textrm{ for all }t>0\,.
\end{equation*}
In particular, $(v_k)_k$ is uniformly bounded in $L^\infty_{\omega *}(\R^+;\calM_b(\Gamma))=L^1(\R^+;C^0(\Gamma))^*$, i.e. the space of weakly* measurable, uniformly bounded functions on $\R^+$ with values in the finite Radon measures on $\Gamma$.

Therefore, there exists a subsequence $k\to\infty$ (not relabeled) and $\mu\in L^\infty_{\omega *}(\R^+;\calM_b(\Gamma))$ such that 
\begin{equation}
  v_k \weakstarto \mu \quad\textrm{ in } L^1(\R^+;C^0(\Gamma))^*\,.
  \label{eq:ConvVK}
\end{equation}
In particular we have a family $(\mu_t)_{t>0}$ of finite Radon measures on $\Gamma$ such that $d\mu(x,t)=d\mu_t(x)\,dt$ and $\mu_t(\Gamma)=1$ for all $t>0$.

We next test equation \eqref{eq:udk-1} with $u_k$ and deduce that
\begin{align*}
  0 &= \int_\Gamma \big(\partial_t u_k -\Delta u_k +(1-g)\xi_k -w_kg\big)u_k\,d\sigma\\
  &= \frac{d}{dt}\int_\Gamma \frac{1}{2}u_k^2\,d\sigma + \int_\Gamma |\nabla u_k|^2\,d\sigma
  +\int_\Gamma (1-g)u_k -w_kg u_k\,d\sigma\,.
\end{align*}
For any $T>0$ this yields
\begin{align*}
  &\int_0^T \int_\Gamma (1-g)v_k -w_kgv_k \,d\sigma\,dt\leq \int_\Gamma \frac{1}{2m_k}(u_{0,k})^2\,d\sigma\,\to 0 \quad\textrm{ as }k\to\infty\,,
\end{align*}
where we have used assumption \eqref{eq:Ass2U0}.

Next by Proposition \ref{P.wkconv} and \eqref{eq:wkvklimit} we can pass to the limit $k\to\infty$ in this inequality and obtain
\begin{align*}
  &\int_0^T \int_\Gamma \big((1-g)  -\alpha_*g\big)\,d\mu_t\,dt\leq 0\,,
\end{align*}
hence by $(1+\alpha_*)\gmaxt=1$
\begin{align*}
  &\int_0^T \int_\Gamma 1 \,d\mu_t\,dt 
  \leq \int_0^T  \int_\Gamma (1+\alpha_*)g \,d\mu_t\,dt 
  = \int_0^T  \int_\Gamma \frac{g}{\gmaxt}\,d\mu_t\,dt
  \leq \int_0^T \int_\Gamma 1 \,d\mu_t\,dt\,.
\end{align*}
This yields for almost all $t>0$ that $\fint_\Gamma \frac{g}{\gmaxt}\,d\mu_t=1$ and hence that $\mu_t(\{g(\cdot,t)<\gmaxt\})=0$.
\end{proof}

\section{Rescaled systems for small times}\label{S.rescaled}
We have seen that in the original time scale solutions instantaneously get supported in the maximizer of $g$.
We now consider a slow time-scale and a rescaling that describes this concentration process.
In this chapter we first consider in Section \ref{Ss.infiniteD} the case $D=\infty$.

\subsection{The rescaled system in the infinite cytosolic diffusion limit}\label{Ss.infiniteD}
Consider for $m>0$ a solution $(u,\xi,\alpha)$ of \eqref{eq:uk-1}-\eqref{eq:uk-3} with $\int_\Gamma u(\cdot,0)\,d\sigma=\int_\Gamma u(\cdot,t)\,d\sigma=m$. 

In order to analyze the concentration process we assume that $g=g(x)$ is independent of time. As before, we denote
\[
\alpha_*:= \frac{1-\gmax}{\gmax} \qquad \mbox{ and } \qquad \alpha^*:= \frac{1-\gmin}{\gmin}\,.
\]
Now we consider the rescaled functions $u_m,\xi_m:\Gamma\times (0,\infty)\to\R$, $\alpha_m:(0,\infty)\to\R$, $u_{0,m}:\Gamma\to\R$ given by
\begin{equation*}
  u_m(x,t)=\frac{1}{m}u(x,mt)\,,\quad \xi_m(x,t)=\xi(x,mt)\,,\quad
  \alpha_m(t)=\alpha(mt)\,,\quad u_{0,m}(x)=\frac{1}{m}u(x,0)\,.
\end{equation*}
Then $(u_m,\xi_m,\alpha_m)_m$ is the unique solution of the system
\begin{align}
  \partial_t u_m & =m \Delta u_m-(1-g)\xi_m + \alpha_mg & \textrm{ on } \Gamma\times (0,\infty)\,\label{eq:RescXi-1}\\
    u_m &\geq 0\,\quad 
  0\leq \xi_m\leq 1\,,\quad 
  u_m\xi_m = u_m & \textrm{ a.e~on } \Gamma\times (0,\infty)\,, \label{eq:RescXi-4}
  \\
   \alpha_m(t) &= \frac{\int_{\{u_m(\cdot,t>0)\}} (1-g)\,d\sigma}{\int_{\{u_m(\cdot,t>0)\}} g\,d\sigma}
  & \textrm{ for a.e.~}t>0\,, \label{eq:RescXi-2}\\
  u_m(\cdot,0) &= u_{0,m}& \textrm{ on } \Gamma\,, \label{eq:RescXi-5}
\end{align}
and satisfies 
\begin{equation*}
  \int_\Gamma u_m(\cdot,t)\,d\sigma=1 \quad\textrm{ for a.e.~}t>0\,.
\end{equation*}

In the following we consider a family of solution $(u_m,\xi_m,\alpha_m)_{m>0}$ of \eqref{eq:RescXi-1}-\eqref{eq:RescXi-5} for a given family of nonnegative initial data $(u_{0,m})_m$ with $\int_\Gamma u_{0,m}\,d\sigma=1$ for all $m>0$ and
\begin{equation}
  u_{0,m} \to u_0\quad\textrm{ in }H^1(\Gamma)\cap C^0(\Gamma)\,. \label{eq:RescAssUm0}
\end{equation}
By mass conservation it holds
\begin{equation}
  \int_\Gamma u_m(\cdot,t)\,d\sigma = 1\quad\textrm{ for a.e.~}t\in(0,T)\,. \label{eq:RescMassCons}
\end{equation}

We first derive a limit system for $m\to 0$.
\begin{lemma}[Limit system] \label{lem:LimSys}
With $m\to 0$ the solutions $(u_m)_{m}$ converge strongly in $L^2_{\loc}(\Gamma\times[0,\infty))$ and weakly in $H^1_{\loc}(\Gamma\times[0,\infty))$ to the unique solution $(u,\xi,\alpha)\in H^1_{\loc}(\Gamma\times[0,\infty))\times L^\infty(\Gamma\times[0,\infty))\times L^\infty(0,\infty)$ of
\begin{align}
  \partial_t u &= -(1-g)\xi + \alpha g\,,& \mbox{ on } \Gamma\times (0,\infty)\,\label{4eq:UEps0DInfty1}\\
  0\leq \xi &\leq 1\,,\quad u\xi=u\,,u\geq 0\,, & \mbox{ a.e. on } \Gamma \times (0,\infty),
\label{4eq:UEps0DInfty}\\
  \alpha(t) &=\frac{\int_{\{u(\cdot,t)>0\}}(1-g)\,d\sigma}{\int_{\{u(\cdot,t)>0\}}g\,d\sigma}\,,& \mbox{ for a.a. } t>0\,,
  \label{4eq:UEps0DInfty2}\\
  u(\cdot,0) &= u_0\,& \mbox{ on } \Gamma\,.\label{4eq:UEps0DInfty3}
\end{align}
\end{lemma}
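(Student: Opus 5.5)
Our plan has three parts: uniform bounds independent of $m$, compactness, and identification of the limit, with uniqueness of the limit system giving convergence of the whole family.

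\textbf{Uniform bounds.} We first need the analogue of Lemma \ref{L.aprioriwk}, namely $\alpha_*\le\alpha_m\le\alpha^*$. This follows directly from \eqref{eq:RescXi-2}, since $\gmin\le g\le\gmax$. Hence the right-hand side $f_m:=-(1-g)\xi_m+\alpha_m g$ is bounded in $L^\infty$, uniformly in $m$.

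\emph{Bound on $\partial_t u_m$.} Test \eqref{eq:RescXi-1} with $\partial_t u_m$. This gives
\[
\int_0^T\!\!\int_\Gamma |\partial_t u_m|^2\,d\sigma\,dt+\frac m2\sup_{t\le T}\|\nabla u_m(t)\|_{L^2}^2\le \frac m2\|\nabla u_{0,m}\|_{L^2}^2+\int_0^T\!\!\int_\Gamma f_m\,\partial_t u_m\,d\sigma\,dt.
\]
Absorbing the last term by Young's inequality bounds $\partial_t u_m$ in $L^2(\Gamma_T)$, uniformly in $m$, thanks to \eqref{eq:RescAssUm0}.

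\emph{Bound on $\nabla u_m$.} A spatial gradient bound uniform in $m$ is less obvious, since diffusion degenerates as $m\to0$. Differentiating formally in $x$, $\nabla u_m$ satisfies $\partial_t\nabla u_m=m\Delta\nabla u_m+\nabla f_m$. Where $u_m>0$ we have $\xi_m=1$, so there $f_m=-(1-g)+\alpha_m g$ and $\nabla f_m=(1+\alpha_m)\nabla g$. In the interior of $\{u_m=0\}$ the gradient vanishes. This suggests testing the differentiated equation with $\nabla u_m$. The identity $\nabla u_m\,\xi_m=\nabla u_m$ a.e. (a consequence of $u_m\xi_m=u_m$, since $\nabla u_m=0$ a.e. on $\{u_m=0\}$) then yields
\[
\frac{d}{dt}\frac12\|\nabla u_m\|^2_{L^2}\le \int_\Gamma \nabla f_m\cdot\nabla u_m\,d\sigma=\int_\Gamma (1+\alpha_m)\nabla g\cdot\nabla u_m\,d\sigma .
\]
With $\nabla g\in L^2$ and Gronwall this gives $\|\nabla u_m(t)\|_{L^2}\le \|\nabla u_{0,m}\|_{L^2}+Ct$.

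Making this rigorous is the main obstacle. It requires the regularity of solutions from \cite{LNRV21,LNRV23} at fixed $m$, and a justification of $\int \nabla(\xi_m-1)\cdot\nabla u_m\,(1-g)\ge0$-type terms. We would justify these by working with the penalized/regularized approximation used in the existence proof, where $\xi$ is replaced by a monotone smooth function $\xi_\delta(u)$, so that $\int(1-g)\xi_\delta'(u)|\nabla u|^2\ge0$, and the commutator term with $\nabla g$ is harmless. Combining the two bounds, $u_m$ is bounded in $H^1(\Gamma_T)$ for every $T$.

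\textbf{Compactness and identification.} By Rellich we extract a subsequence with $u_m\to u$ strongly in $L^2(\Gamma_T)$ and weakly in $H^1$, $\xi_m\weakstarto\xi$, and $\alpha_m\weakstarto\bar\alpha$. Passing to the limit in the weak form of \eqref{eq:RescXi-1} (the term $m\Delta u_m$ vanishes because $m\nabla u_m\to0$) gives $\partial_t u=-(1-g)\xi+\bar\alpha g$, with $0\le\xi\le1$, $u\ge0$, and $u(\cdot,0)=u_0$ by the time continuity implied by the $H^1$ bound. The constraint $u\xi=u$ passes to the limit as a strong-times-weak product. Mass conservation $\int_\Gamma u=1$ also passes to the limit.

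\textbf{Formula for $\alpha$.} Integrating the limit equation over $\Gamma$ gives $\bar\alpha\int g=\int(1-g)\xi$. We do not get $\alpha$ from a formula involving supports directly, since supports are not stable under strong convergence. Instead, on $\{u>0\}$ we have $\xi=1$. On $\{u=0\}$, for a.e. $t$, Stampacchia's theorem applied to $u\in H^1$ gives $\partial_t u=0$ a.e., hence $(1-g)\xi=\bar\alpha g$ there. Therefore
\[
0=\int_\Gamma\partial_t u\,d\sigma=-\int_{\{u>0\}}(1-g)\,d\sigma+\bar\alpha\int_{\{u>0\}}g\,d\sigma ,
\]
which is \eqref{4eq:UEps0DInfty2}. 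Note that $\{u(\cdot,t)>0\}$ has positive measure because the mass is $1$.

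\textbf{Uniqueness.} Let $(u_1,\xi_1,\alpha_1)$ and $(u_2,\xi_2,\alpha_2)$ be two solutions, and test the difference of the equations with $\mathrm{sign}(u_1-u_2)$, or with $(u_1-u_2)$. The $\xi$ terms have the right monotone sign. The $\alpha$ term is handled by noting that $\int(u_1-u_2)=0$ and that, at each time, the $\alpha_i$ are Lagrange multipliers for mass. One checks that $\frac{d}{dt}\|u_1-u_2\|_{L^1}\le0$, as in the uniqueness proof of \cite{LNRV21} with $\Delta$ dropped. If this route is delicate, we would instead use the $L^1$-contraction argument from \cite{LNRV21} verbatim, since it does not use diffusion. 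Uniqueness then gives convergence of the full family $m\to0$.
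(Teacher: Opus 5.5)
Your proposal is correct and follows essentially the same route as the paper: the same uniform $H^1(\Gamma_T)$ bounds (your differentiated-equation estimate is the paper's test of \eqref{eq:RescXi-1} with $-\Delta u_m$), the same compactness and identification of $\alpha$ via $\partial_t u=0$ a.e.\ on $\{u=0\}$ plus mass conservation, and uniqueness via the $L^1$-contraction of \cite{LNRV21}. The one difference is that your regularization $\xi_\delta(u)$ is unnecessary: since $u_m\in W^{2,1}_2$, one has $\Delta u_m=0$ a.e.\ on $\{u_m=0\}$ (apply your Stampacchia observation once more, to $\nabla u_m$), so $f_m\Delta u_m=\big(-(1-g)+\alpha_m g\big)\Delta u_m$ a.e.\ and one can integrate by parts without ever differentiating $\xi_m$.
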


\begin{proof}
Choose any $T>0$.

We first derive uniform estimates and compactness properties.
We clearly have $0<\alpha_* \leq \alpha \leq \alpha^*$ and deduce by the maximum principle that
\begin{equation}
  0 \leq u_m \leq C(T)\quad\textrm{ in }\Gamma_T\,. \label{eq:RescBdLinfty}
\end{equation}
By testing equation \eqref{eq:RescXi-1} with $-\Delta u_m$ and using that $\Delta u_m=0$ almost everywhere in $\{u_m=0\}$ we deduce that
\begin{align}
  \int_\Gamma \frac{1}{2}|\nabla u_m(\cdot,t)|^2 \,d\sigma + \int_\Gamma m(\Delta u_m)^2\,d\sigma\,dt
  &\leq \int_\Gamma \frac{1}{2}|\nabla u_{0,m}|^2 \,d\sigma+ \int_0^T\int_\Gamma |\nabla u_m|^2 \,d\sigma\,dt + CT \int_{\Gamma} |\nabla g|^2\,d\sigma\,.
  \label{eq:RescGradBd}
\end{align}
The assumption on the initial data \eqref{eq:RescAssUm0} and Gronwalls Lemma yield
\begin{align}
  \|u_m\|_{L^\infty(0,T;H^1(\Gamma))} +\sqrt m\|\Delta u_m\|_{L^2(\Gamma_T)}\leq C(T)\,,
\end{align}
which by \eqref{eq:RescXi-1} in addition shows that 
\begin{equation}
  \|\partial_t u_m\|_{L^2(\Gamma_T)}\leq C(T)\,.
\end{equation}
By the uniform bounds and Rellich's theorem we conclude that there exists a subsequence $m_k\to 0$ ($k\to\infty$), and functions $u\in H^1(\Gamma_T)$, $\xi\in L^\infty(\Gamma_T)$ and $\alpha\in L^\infty(0,T)$ such that
\begin{align}
  u_{m_k}&\to u \quad\textrm{ in }L^2(\Gamma_T)\,,\quad u\geq 0\,,\\
  u_{m_k}&\weakto u \quad\textrm{ in }H^1(\Gamma_T)\,\\
  \xi_{m_k}&\weakstarto \xi \quad\textrm{ in }L^\infty(\Gamma_T)\,\,\quad
  0\leq \xi\leq 1\,,\\
  \alpha_{m_k}&\weakstarto \alpha \quad\textrm{ in }L^\infty(\Gamma_T)\,.
\end{align}

We next derive the limit system.

For any $\varphi\in C^0(\Gamma_T)$
\begin{equation*}
  \int_{\Gamma_T} u(1-\xi)\varphi \,d\sigma \,dt= \lim_{k\to\infty}\int_{\Gamma_T} u_{m_k}(1-\xi_{m_k})\varphi\,d\sigma \,dt = 0\,,
\end{equation*}
hence
\begin{equation}
  u\xi = u \quad\textrm{ almost everywhere in }\Gamma_T\,.
  \label{eq:RescLim-3}
\end{equation}
By mass conservation \eqref{eq:RescMassCons} the assumption on the initial data \eqref{eq:RescAssUm0}, and the strong $L^2(\Gamma_T)$ convergence of $(u_{m_k})_k$ we obtain
\begin{equation}
  \int_\Gamma u(\cdot,t)\,d\sigma = \int_\Gamma u_0\,d\sigma \qquad\textrm{ a.e. }t\in(0,T)\,. \label{eq:RescMassConsLim}
\end{equation}
We now test \eqref{eq:RescXi-1} with $\varphi\in C^0_c(0,T;C^2)$ and deduce
\begin{align*}
  &\int_{\Gamma_T} \varphi\Big(\partial_t u +(1-g)\xi -\alpha g\Big)\,d\sigma \,dt\nonumber\\
  &\qquad=\lim_{k\to\infty} \int_{\Gamma_T} \varphi\Big(\partial_t u_{m_k} +(1-g)\xi_{m_k} -\alpha_{m_k} g\Big)\,d\sigma \,dt-m\int_{\Gamma_T}u_{m_k}\Delta\varphi\,d\sigma \,dt
  =0\,.
\end{align*}
This proves that
\begin{equation}
  \partial_t u = -(1-g)\xi +\alpha g\quad\textrm{ a.e.~in }\Gamma_T\,.
  \label{eq:RescLim-1}
\end{equation}
Using the mass conservation \eqref{eq:RescMassConsLim}, the uniform boundedness of $\partial_t u$, and $\partial_t u=0$ almost everywhere in $\{u(\cdot,t)=0\}$ yields for almost all $t\in (0,T)$
\begin{equation*}
  0 = \int_\Gamma \partial_tu(\cdot,t) \,d\sigma
  = \int_{\{u(\cdot,t)>0\}} \partial_t u(\cdot,t)\,d\sigma
  = \int_{\{u(\cdot,t)>0\}} -(1-g) +\alpha(t) g(\cdot,t)\,d\sigma\,,
\end{equation*}
which shows
\begin{equation*}
  \alpha(t) = \frac{\fint_{\{u(\cdot,t)>0\}}(1-g)\,d\sigma}{\fint_{\{u(\cdot,t)>0\}}g\,d\sigma}\quad\textrm{ for a.e.~}t\in (0,T)\,.
  \label{eq:RescLim-2}
\end{equation*}
We next observe that solutions on $(0,T)$ are unique.
This follows from the $L^1$-contraction property proved in \cite[Theorem 3.1]{LNRV21}.
Reviewing that proof we easily see that for any two solutions $(u_1,\xi_1,\alpha_1)$ and $(u_2, \xi_2,\alpha_2)$ we have
\begin{equation*}
  t\mapsto \int_{\Gamma}  (u_1-u_2)_+(\cdot,t)\,d\sigma \qquad \textrm{ is decreasing on } [0,T]\,.
\end{equation*}
In particular, given $u_0 \in L^2(\Gamma)$ with  $u_0 \geq 0$, there exists at most one solution $(u,\xi,\alpha)$ on $(0,T)$.

By a familiar identification and continuation argument this shows the convergence towards a unique solution of \eqref{4eq:UEps0DInfty1}-\eqref{4eq:UEps0DInfty3}.
\end{proof}

Next let us consider the limit problem.
We will show that the support of $u(\cdot,t)$ is decreasing with time.

We define a set $G$ of well-behaved times as those values $t\in (0,T)$ that are a Lebesgue point of $\alpha,\xi$, i.e.
\begin{equation}
  \alpha(t) = \lim_{h\to 0} \fint_{t-h}^{t+h}\alpha(\tau)\,d\tau\,,\qquad
  \xi(\cdot,t) = \lim_{t\downarrow 0} \fint_{t-h}^{t+h}\xi(\cdot,\tau)\,d\tau\quad\textrm{ in }L^2(\Gamma)\,,
  \label{eq:AssLebPt}
\end{equation}
and that satisfy
\begin{equation}
  0=(1-g)\xi(\cdot,t) -\alpha(t) g \quad\textrm{ almost everywhere in }\{u(\cdot,t)=0\}\,,
  \label{eq:Resc-G2}
\end{equation}
as well as
\begin{equation}
  0=\int_\Gamma\Big((1-g)\xi(\cdot,t) -\alpha(t) g\Big)\,d\sigma
  \label{eq:Resc-G3}
\end{equation}
We note that $(0,T)\setminus G$ is a $\calL^1$-null set.

\begin{proposition}[Monotonicity of support] \label{prop:MonSupp}
Let $(u,\xi,\alpha)$ be a solution of \eqref{4eq:UEps0DInfty1}-\eqref{4eq:UEps0DInfty3}.
Then for any $0\leq t_0<t$, $t_0,t\in G$ it holds
\begin{equation}
  u(\cdot,t)=0 \quad\textrm{ a.e.~in }\{u(\cdot,t_0)=0\}\,.
  \label{eq:MonSupp}
\end{equation}
\end{proposition}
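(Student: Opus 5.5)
The plan is to show that the multiplier $\alpha$ is nonincreasing in time along good times. From this, a pointwise sign argument on the zero set of $u$ will show that points where $u$ vanishes stay zero.

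\emph{Step 1: a threshold on the zero set.} Fix $t_0\in G$. By \eqref{eq:Resc-G2} we have $\xi(\cdot,t_0)=\alpha(t_0)g/(1-g)$ a.e.\ on $\{u(\cdot,t_0)=0\}$. Since $\xi\le 1$, this gives
\[
\alpha(t_0)g\le 1-g, \quad\text{i.e.}\quad g\le \frac{1}{1+\alpha(t_0)} \quad\text{a.e.\ on }\{u(\cdot,t_0)=0\}.
\]
Conversely, consider a point with $g>1/(1+\alpha(\tau))$. There $\partial_t u(\cdot,\tau)=-(1-g)\xi+\alpha g\ge -(1-g)+\alpha g>0$ for every $\xi\in[0,1]$. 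This is compatible with the equation only if $u(\cdot,\tau)>0$.

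\emph{Step 2: monotonicity of $\alpha$ (the main obstacle).} I want to show that $\alpha(t)\le\alpha(t_0)$ for good times $t>t_0$. The natural route is the $L^1$-contraction/comparison principle quoted in the proof of Lemma \ref{lem:LimSys}. Use it to compare $u(\cdot,\cdot+h)$ with $u$, or argue directly with the formula \eqref{4eq:UEps0DInfty2}: $\alpha$ is a decreasing function of the positivity set in the sense that adding points with larger $g$ lowers the ratio. More concretely, I would prove that the positivity set $P(t)=\{u(\cdot,t)>0\}$ evolves as follows. It always contains $\{g>1/(1+\alpha)\}$, where $u$ grows. Points leave $P$ only when they reach zero, and they can enter $P$ only where $g\ge 1/(1+\alpha)$. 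Meanwhile $\alpha$ is determined by $P$ via \eqref{4eq:UEps0DInfty2}.

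Then suppose $\alpha$ increased on some interval. The threshold $1/(1+\alpha)$ would decrease, so points with $g$ between the old and new thresholds would have to be added to $P$. Adding points with $g>1/(1+\alpha)$ to $P$ makes the ratio $\int_P(1-g)/\int_P g$ strictly smaller than $\alpha$, which is a contradiction. I would make this rigorous via the Lebesgue-point conditions \eqref{eq:AssLebPt}, \eqref{eq:Resc-G3}, computing the sign of $\alpha(t)-\alpha(t_0)$ through the identity
\[
\int_{P(t)}\bigl((1-g)-\alpha(t)g\bigr)=0
\]
at both times. I expect this step to require the most care, since only a.e.\ information is available.

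\emph{Step 3: conclusion.} With $\alpha$ nonincreasing, $1/(1+\alpha(\tau))\ge 1/(1+\alpha(t_0))$ for $\tau\ge t_0$. Take a.e.\ $x$ with $u(x,t_0)=0$. By Step 1, $g(x)\le 1/(1+\alpha(t_0))\le 1/(1+\alpha(\tau))$, so
\[
-(1-g(x))\xi+\alpha(\tau)g(x)\le \alpha(\tau)g(x)-(1-g(x))\xi .
\]
Whenever $u(x,\tau)>0$ we have $\xi=1$, so $\partial_t u(x,\tau)=\alpha(\tau)g-(1-g)\le 0$. Since $u(x,\cdot)\in H^1$ is absolutely continuous for a.e.\ $x$, the function $s\mapsto u(x,s)$ starts at $0$ at $s=t_0$ and is nonincreasing whenever it is positive. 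Hence $u(x,t)=0$ for all $t>t_0$, which is \eqref{eq:MonSupp}. Applying Fubini handles the a.e.\ issues.
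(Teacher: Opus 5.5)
Your Steps 1 and 3 are sound. Step 1 is \eqref{eq:Resc-G2} combined with $\xi\le 1$. Step 3 correctly turns a nonincreasing $\alpha$ into $\partial_t u(x,\cdot)\le 0$ a.e.\ on $(t_0,t)$ for a.e.\ $x\in\{u(\cdot,t_0)=0\}$.

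\textbf{The gap is Step 2.} It carries the whole proof, it is only sketched, and the route you propose for making it rigorous cannot work. Set $P_0=\{u(\cdot,t_0)>0\}$, $P_1=\{u(\cdot,t)>0\}$, $B=P_1\setminus P_0$ and $C=P_0\setminus P_1$. The identity $\int_{P(s)}((1-g)-\alpha(s)g)\,d\sigma=0$ at $s=t_0$ and $s=t$ yields exactly
\[
\big(\alpha(t)-\alpha(t_0)\big)\int_{P_0\cup P_1}g\,d\sigma=\int_B\big((1-g)-\alpha(t_0)g\big)\,d\sigma+\int_C\big(\alpha(t)g-(1-g)\big)\,d\sigma .
\]
By \eqref{eq:Resc-G2} with $\xi\le 1$, the first integrand is $\ge 0$ because $B\subset\{u(\cdot,t_0)=0\}$. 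The second is $\le 0$ because $C\subset\{u(\cdot,t)=0\}$. So information at the two times cannot decide the sign of $\alpha(t)-\alpha(t_0)$; you would need $B$ to be null, which is precisely \eqref{eq:MonSupp}.

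Your heuristic contradiction also mixes thresholds from different times. The points that enter when $\alpha$ increases satisfy $1/(1+\alpha(t))<g\le 1/(1+\alpha(t_0))$. Adding such points raises the ratio above $\alpha(t_0)$, which is consistent with an increase of $\alpha$, not contrary to it. A real contradiction would have to come from the dynamics. Since $\alpha$ is only $L^\infty$ in time (it can jump) and $P$ can gain and lose points at once, that needs a quantitative argument you do not give. The $L^1$-contraction for time translates does not help here either: it only says that $t\mapsto\int_\Gamma(u(\cdot,t+h)-u(\cdot,t))_+\,d\sigma$ is nonincreasing, which does not give the sign.

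\textbf{The missing idea.} The paper reverses your order: it proves support monotonicity directly and gets monotonicity of $\alpha$ afterwards (Corollary \ref{Cor:Mon}). Freeze the data at $t_0$: take $\alpha_0=\alpha(t_0)$, and set $\xi_0=1$ on $P_0$ and $\xi_0=\alpha_0 g/(1-g)$ on $\Gamma\setminus P_0$. Let $\Chi_0,\Chi,\Chi^*$ be the characteristic functions of $P_0$, $P_1$ and $P_0\cup P_1$. Then differentiate the mass outside the initial support:
\[
\frac{d}{dt}\int_\Gamma(1-\Chi_0)u(\cdot,t)\,d\sigma=\int_\Gamma(1-\Chi_0)\Chi\Big(-(1-g)(\xi-\xi_0)+(\alpha-\alpha_0)g\Big)\,d\sigma .
\]
Multiply by $\int_\Gamma g\Chi^*\,d\sigma$ and insert \eqref{eq:AlphaDiff}, which is your two-time identity. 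The products of two integrals over $B$ cancel, and what remains is
\[
-\int_\Gamma g\Chi_0\,d\sigma\int_\Gamma(1-\Chi_0)\Chi(1-g)(\xi-\xi_0)\,d\sigma+\int_\Gamma(1-\Chi_0)\Chi g\,d\sigma\int_\Gamma\Chi_0(1-g)(\xi-\xi_0)\,d\sigma\le 0 .
\]
This is $\le 0$ because $\xi-\xi_0\ge 0$ on $B$ and $\xi-\xi_0\le 0$ on $P_0$. The mass outside $P_0$ is nonnegative and vanishes at $t_0$, so it stays zero. This time derivative is exactly the dynamic information your Step 2 lacks.
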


\begin{proof}
We assume $t_0=0\in G$, i.e.~that the system at time $t=0$ is well behaved.
Without loss of generality we assume $u_0\geq 0$ and $\int_\Gamma u_0=1$, and we define
\begin{equation*}
  \alpha_0 := \frac{\fint_{\{u_0>0\}}(1-g)\,d\sigma}{\fint_{\{u_0>0\}}g\,d\sigma}\quad\textrm{ for a.e.~}t\in (0,T)\,,
\end{equation*}
and
\begin{equation}
  \xi_0 := 
  \begin{cases}
    1\quad&\textrm{ in }\{u_0>0\}\,,\\
    \alpha_0\frac{g}{1-g} &\textrm{ in }\{u_0=0\}\,.
  \end{cases}
  \label{eq:DefXi0}
\end{equation}
By assumption
\begin{equation}
  \alpha_0 = \lim_{h\downarrow 0} \fint_0^h\alpha(\tau)\,d\tau\,,\qquad
  \xi_0 = \lim_{h\downarrow 0} \fint_0^h\xi(\cdot,\tau)\,d\tau\quad\textrm{ in }L^2(\Gamma)\,.
  \label{eq:AssLebPt-0}
\end{equation}

For any $t\in G$ we deduce from \eqref{eq:Resc-G2} and the definition of $\alpha_0,\xi_0$ that
\begin{equation}
  0=(1-g)\big(\xi(\cdot,t)-\xi_0\big) -\big(\alpha(t)-\alpha_0\big) g \quad\textrm{ almost everywhere in }\{u(\cdot,t)+u_0=0\}\,.
\end{equation}
Together with \eqref{eq:Resc-G3} this yields with $\Chi_0:=\Chi_{\{u_0>0\}}$, $\Chi(\cdot,t):=\Chi_{\{u(\cdot,t)>0\}}$ and  $\Chi^*(\cdot,t):=\Chi_{\{u(\cdot,t)+u_0>0\}}$ that
\begin{equation*}
  0=\int_\Gamma\Chi^*(\cdot,t)\Big((1-g)\big(\xi(\cdot,t)-\xi_0\big) -\big(\alpha(t)-\alpha_0\big) g \Big)\,d\sigma \,,
\end{equation*}
hence
\begin{equation}
  \big(\alpha(t)-\alpha_0\big) \int_\Gamma g\Chi^*(\cdot,t) \,d\sigma
  =\int_\Gamma\Chi^*(\cdot,t)(1-g)\big(\xi(\cdot,t)-\xi_0\big)\,d\sigma\,.
  \label{eq:AlphaDiff}
\end{equation}

Moreover, we deduce from \eqref{eq:RescLim-1}, \eqref{eq:DefXi0}, and \eqref{eq:AlphaDiff} that for almost all $t\in (0,T)$
\begin{align*}
  &\int_\Gamma g\Chi^*(\cdot,t)\,d\sigma\frac{d}{dt}\int_\Gamma (1-\Chi_0)u(\cdot,t)\,d\sigma \\
&\qquad = \int_\Gamma g\Chi^*(\cdot,t)\,d\sigma\int_\Gamma (1-\Chi_0)\Chi(\cdot,t)\Big(-(1-g)\xi +\alpha g\Big)\,d\sigma\\
&\qquad = \int_\Gamma g\Chi^*(\cdot,t)\,d\sigma\int_\Gamma (1-\Chi_0)\Chi (\cdot,t)\Big(-(1-g)(\xi-\xi_0) +(\alpha-\alpha_0)g \Big)\,d\sigma\\
  &\qquad = -\int_\Gamma g\Chi^*(\cdot,t)\,d\sigma \int_\Gamma (1-\Chi_0)\Chi (\cdot,t)(1-g)(\xi-\xi_0)\,d\sigma\\
 &\qquad\qquad + \int_\Gamma (1-\Chi_0)\Chi (\cdot,t) g \,d\sigma
 \int_\Gamma\Chi^*(\cdot,t)(1-g)\big(\xi(\cdot,t)-\xi_0\big)\,d\sigma\\
 &\qquad = -\int_\Gamma g\Chi_0\,d\sigma \int_\Gamma (1-\Chi_0)\Chi (\cdot,t)(1-g)(\xi-\xi_0)\,d\sigma\\
&\qquad\qquad + \int_\Gamma (1-\Chi_0)\Chi (\cdot,t)g\,d\sigma
  \int_\Gamma\Chi_0(1-g)\big(\xi(\cdot,t)-\xi_0\big)\,d\sigma \leq 0\,,
\end{align*}
where we have used in the second-last equality that $\Chi^*(\cdot,t)=(1-\Chi_0)\Chi (\cdot,t)+\Chi_0$ and in the final inequality that $\xi-\xi_0\geq 0$ in $\{u(\cdot,t)>0\}$ and $\xi(\cdot,t)-\xi_0\leq 0$ in $\{u_0>0\}$.

This proves the claim.
\end{proof}

\begin{corollary}[Representation formula]
For any solution $(u,\xi,\alpha)$ of \eqref{4eq:UEps0DInfty1}-\eqref{4eq:UEps0DInfty3} we have for almost all $t>0$
\begin{equation}
  u(\cdot,t) = \Big(u_0 + \int_0^t -(1-g)(\cdot,\tau)+\alpha(\tau)g\,d\tau\Big)_+\,.
  \label{eq:RepForm}
\end{equation}
In particular, $u:\Gamma\times[0,\infty)$ has a continuous representative.
\end{corollary}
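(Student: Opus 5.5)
Set $F(\cdot,t):=u_0+\int_0^t\big(-(1-g)+\alpha(\tau)g\big)\,d\tau$. We prove $u(\cdot,t)=F(\cdot,t)_+$ by treating separately the points where $u(\cdot,t)>0$ and where $u(\cdot,t)=0$, using the monotonicity of the support from Proposition~\ref{prop:MonSupp} in both cases.

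\emph{Points with $u(\cdot,t)>0$.} Since $u\in H^1_{\loc}$, for a.e.\ $x$ the map $\tau\mapsto u(x,\tau)$ is absolutely continuous and \eqref{4eq:UEps0DInfty1} holds for a.e.\ $\tau$. Fix $t\in G$ and a point $x$ with $u(x,t)>0$. By Proposition~\ref{prop:MonSupp}, applied with the pairs $(\tau,t)$ for $\tau\in G$, $\tau<t$, and a Fubini argument, for a.e.\ such $x$ we have $u(x,\tau)>0$ for a.e.\ $\tau\in(0,t)$. Then $\xi=1$ there, so $\partial_\tau u=-(1-g)+\alpha g$ for a.e.\ $\tau<t$. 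Integrating gives $u(x,t)=F(x,t)>0$, hence $u(x,t)=F(x,t)_+$.

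\emph{Points with $u(\cdot,t)=0$.} Here we must show $F(x,t)\le 0$. Fix such $x$ and set $s:=\sup\{\tau<t:\ u(x,\tau)>0\}$, using the continuous-in-time representative.
\begin{itemize}
\item If this set is empty, then $u_0(x)=0$ and $u(x,\tau)=0$ on $[0,t]$.
\item Otherwise, by monotonicity $u(x,\cdot)>0$ on $[0,s)$ (a.e.\ in $x$) and $u(x,\cdot)=0$ on $[s,t]$. Integrating as in the first case gives $F(x,s)=u(x,s)=0$ by continuity.
\end{itemize}
In both cases it remains to show $\int_s^t\big(-(1-g)+\alpha g\big)\,d\tau\le 0$, with $s=0$ in the first case.

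On $[s,t]$ we have $u(x,\cdot)=0$, so $\partial_\tau u=0$ and \eqref{eq:Resc-G2} gives $\alpha(\tau)g(x)=(1-g(x))\xi(x,\tau)\le 1-g(x)$ for a.e.\ $\tau\in(s,t)$. Hence the integrand is nonpositive and $F(x,t)\le F(x,s)\le 0$. This yields $u(x,t)=0=F(x,t)_+$. Equivalently, $F(x,\cdot)$ is nonincreasing once $u(x,\cdot)$ vanishes, so it can never become positive again.

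\emph{Main obstacle: measure-theoretic bookkeeping.} Proposition~\ref{prop:MonSupp} is an a.e.-in-$x$ statement for each pair of good times, and we need it simultaneously along the $x$-fibres. We handle this by taking a countable dense set of times in $G$, discarding a null set of $x$, and using the continuity of $u(x,\cdot)$. Once the formula holds for a.e.\ $t$, continuity of the representative follows because $F$ is continuous. Indeed, $u_0$ is continuous by \eqref{eq:RescAssUm0}, $g$ is continuous, $\alpha\in L^\infty$ makes the integral Lipschitz in $t$, and $(\cdot)_+$ preserves continuity.
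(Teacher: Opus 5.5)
Your proof is correct. On the positivity set $\{u(\cdot,t)>0\}$ it coincides with the paper's: monotonicity of the support plus Fubini gives $\xi=1$ for a.e.\ past time along a.e.\ such fibre, so $u=F$ there.

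On the zero set you take a longer route. The paper does not use the support at all there. It observes that $\xi\le 1$ gives $\partial_\tau u=-(1-g)\xi+\alpha g\ge -(1-g)+\alpha g$ a.e. Hence $u(x,t)=u_0(x)+\int_0^t\partial_\tau u(x,\tau)\,d\tau\ge F(x,t)$ for a.e.\ $x$ and all $t$, and on $\{u(\cdot,t)=0\}$ this already says $F\le 0$.

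Your argument splits $[0,t]$ at the last positivity time $s$. You use equality on $[0,s)$ and the inequality $\alpha g=(1-g)\xi\le 1-g$ only on $[s,t]$. This is the same inequality applied on part of the interval, but it forces a second, fibrewise use of Proposition~\ref{prop:MonSupp}, and that is where your bookkeeping gets delicate. The countable-dense-set-plus-continuity argument does not give $u(x,\cdot)>0$ at \emph{every} $\tau\in[0,s)$. A zero of $u(x,\cdot)$ at a time outside the dense set is not detected by continuity. What it does give is $u(x,\tau)>0$ for a.e.\ $\tau\in(0,s)$: apply your first-case Fubini argument at dense times close to $s$, where $u(x,\cdot)>0$ by continuity. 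The a.e.\ version is all the integration needs, so your proof stands.

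The paper's global comparison $u\ge F$ makes the exit time $s$, the second use of monotonicity, and \eqref{eq:Resc-G2} unnecessary. Your route buys only the slightly more explicit fibrewise fact that $F(x,\cdot)$ is nonincreasing on any time interval where $u(x,\cdot)$ vanishes, which the formula does not require.
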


\begin{proof}
By monotonicity of the support, we deduce for all $t\in G$
\begin{align*}
  u(\cdot,t) = u_0 + \int_0^t -(1-g)(\cdot,\tau)+\alpha(\tau)g\,d\tau\quad \textrm{ in } \{u(\cdot,t)>0\}\,.
\end{align*}
Moreover, since $\xi \le 1$ and $u\in H^1_{\loc}(\Gamma\times[0,\infty))$, it follows that for almost all $t\in G$
\begin{align*}
  0 &=\partial_tu(\cdot,t) \ge u_0 + \int_0^t -(1-g)(\cdot,\tau)+\alpha(\tau)g\dd\tau
  \quad \textrm{ a.e.~in } \{u(\cdot,t)=0\}\,.
\end{align*}

Therefore, \eqref{eq:RepForm} follows.
\end{proof}

We deduce a monotonicity property also for $\alpha$ and $\xi$.
\begin{corollary}[Monotonicity of $\alpha$ and $\xi$] \label{Cor:Mon}
Let $(u,\xi,\alpha)$ be a solution of \eqref{4eq:UEps0DInfty1}-\eqref{4eq:UEps0DInfty3}.
Then for almost all $0\leq t_0<t$ we have
\begin{align*}
  \alpha(t)&\leq \alpha(t_0)\,,\qquad
  \xi(\cdot,t) \leq \xi(\cdot,t_0)\quad\textrm{ a.e.~in }\Gamma\,.
\end{align*}
\end{corollary}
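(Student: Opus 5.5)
We derive both monotonicity statements from the monotonicity of the support, Proposition \ref{prop:MonSupp}, combined with the explicit formula for $\alpha$ in \eqref{4eq:UEps0DInfty2} and the characterization of $\xi$ on the zero set in \eqref{eq:Resc-G2}.

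\emph{Step 1: monotonicity of $\alpha$.} Take $t_0<t$ in $G$ and write $A_s:=\{u(\cdot,s)>0\}$. By Proposition \ref{prop:MonSupp}, $A_t\subset A_{t_0}$ up to a null set. The formula \eqref{4eq:UEps0DInfty2} gives
\begin{equation*}
  \alpha(s)=\frac{|A_s|}{\int_{A_s}g\,d\sigma}-1\,.
\end{equation*}
So it suffices to show that the ratio $|A|/\int_A g\,d\sigma$ does not increase when $A$ shrinks from $A_{t_0}$ to $A_t$. This is not true for arbitrary subsets, so we must use the dynamics.

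Set $D:=A_{t_0}\setminus A_t$. On $D$ the solution $u$ goes from positive to zero. By the representation formula \eqref{eq:RepForm}, for a.e.\ $x\in D$ there is a time $\tau\in(t_0,t]$ at which
\begin{equation*}
  \partial_t u=-(1-g)+\alpha(\tau)g\le 0\,,
\end{equation*}
that is, $g(x)\le 1/(1+\alpha(\tau))$. If $\alpha$ were already known to be monotone on $(t_0,t)$, this would give $g\le 1/(1+\alpha(t))$ on $D$.

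We therefore argue by comparison. Let $\bar g_s:=1/(1+\alpha(s))$, the average of $g$ over $A_s$. Removing from $A_{t_0}$ a set on which $g\le \bar g_{t_0}$ raises the average of $g$, hence lowers $\alpha$. The difficulty is to justify that the removed points satisfy $g\le\bar g_{t_0}$ rather than a bound involving the later, unknown value $\alpha(\tau)$.

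\emph{Step 1a: closing the gap.} We use a first-exit-time argument. Let
\begin{equation*}
  t_1:=\sup\{s\in[t_0,t] : \alpha\le\alpha(t_0)\text{ a.e.\ on }(t_0,s)\}\,.
\end{equation*}
On $(t_0,t_1)$ the points removed have $g\le 1/(1+\alpha(\tau))$ with $\alpha(\tau)\le\alpha(t_0)$. This only yields $g\le 1/(1+\alpha(\tau))$, which is at least $\bar g_{t_0}$, so it points the wrong way.

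A cleaner route is to differentiate instead. For a.e.\ $s$, points near the exit threshold leave exactly where $g<1/(1+\alpha(s))=\bar g_s$, the current average. Removing points below the current average raises the average. Formally, $\Phi(s):=\int_{A_s}(1-(1+\alpha(t_0))g)$ should satisfy a differential inequality.

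An alternative is to mimic the computation in Proposition \ref{prop:MonSupp}, which already uses $\xi-\xi_0$ and $\alpha-\alpha_0$ with the sign information. In \eqref{eq:AlphaDiff}, since $A_t\subset A_{t_0}$, we have $\Chi^*=\Chi_0$. The integrand $(1-g)(\xi(\cdot,t)-\xi_0)$ vanishes on $A_t$ because $\xi=\xi_0=1$ there. On $A_{t_0}\setminus A_t$ it equals $(1-g)(\xi(\cdot,t)-1)\le 0$. Hence \eqref{eq:AlphaDiff} gives
\begin{equation*}
  \big(\alpha(t)-\alpha(t_0)\big)\int_{A_{t_0}}g\,d\sigma\le 0\,,
\end{equation*}
which is exactly $\alpha(t)\le\alpha(t_0)$. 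This is the route we adopt. The only point to check is that \eqref{eq:AlphaDiff} holds with $t_0$ in place of $0$. This follows by restarting at the well-behaved time $t_0\in G$, as in the proof of Proposition \ref{prop:MonSupp}, with $\alpha_0,\xi_0$ replaced by $\alpha(t_0),\xi(\cdot,t_0)$ via \eqref{eq:Resc-G2}.

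\emph{Step 2: monotonicity of $\xi$.} On $A_t\subset A_{t_0}$ we have $\xi(\cdot,t)=1=\xi(\cdot,t_0)$. On $\{u(\cdot,t)=0\}$, \eqref{eq:Resc-G2} gives
\begin{equation*}
  \xi(\cdot,t)=\alpha(t)\frac{g}{1-g}\,.
\end{equation*}
If a point is also in $\{u(\cdot,t_0)=0\}$, then $\xi(\cdot,t_0)=\alpha(t_0)g/(1-g)$, which is at least $\xi(\cdot,t)$ by Step 1. On $A_{t_0}\setminus A_t$ we have $\xi(\cdot,t)\le 1=\xi(\cdot,t_0)$. Since $G$ has full measure, this proves the claim.

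The main obstacle is justifying the restart at $t_0$, namely that the computation leading to \eqref{eq:AlphaDiff} is valid for arbitrary $t_0\in G$. The rest is sign bookkeeping.
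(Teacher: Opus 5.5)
Your adopted route is correct and is essentially the paper's proof. With $\Chi^*=\Chi_0$, the identity \eqref{eq:AlphaDiff} is just the difference between $\alpha(t)\int_{A_{t_0}}g\,d\sigma=\int_{A_{t_0}}(1-g)\xi(\cdot,t)\,d\sigma$ (which follows from \eqref{eq:MonSupp}, \eqref{eq:Resc-G2} and \eqref{4eq:UEps0DInfty2}) and the defining relation for $\alpha(t_0)$; the paper uses this relation directly together with $\xi\le 1$, and your Step 2 is the same case split as the paper's. The abandoned exit-time and differentiation attempts in Step 1 are not needed and can be deleted.
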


\begin{proof}
For $t_0<t \in G$ as above we deduce by \eqref{4eq:UEps0DInfty1}, \eqref{eq:MonSupp}, by $\xi\leq 1$, and \eqref{4eq:UEps0DInfty2}
\begin{equation*}
  \alpha(t)= 
  \frac{\int_{\{u(\cdot,t_0)>0\}}(1-g)\xi(\cdot,t)\,d\sigma}{\int_{\{u(\cdot,t_0)>0\}}g\,d\sigma}
  \leq \alpha(t_0)\,.
\end{equation*}
Next, by the properties of $\xi$ we have $\xi(\cdot,t_0)=1\geq \xi(\cdot,t)$ in $\{u(\cdot,t_0)>0\}$.
Moreover in $\{u(\cdot,t_0)=0\}\subset\{u(\cdot,t)=0\}$ it holds
\begin{equation*}
  (1-g)\big(\xi(\cdot,t)-\xi(\cdot,t_0)\big)
  = g\big(\alpha(t)-\alpha(t_0)\big)\leq 0\,,
\end{equation*}
and the monotonicity of $\xi$ also follows.
\end{proof}

In the following we always choose the continuous representative of $u$ and the left-continuous decreasing representatives of $\alpha,\xi$ on $(0,\infty)$.

\begin{theorem}[Concentration of support] \label{thm:Conc1}
We have 
\[
 \alpha(t)\searrow \alpha_*
\]
and there exists a Borel set $S\subset\Gamma$ such that
\begin{equation}
  S=\mathsmaller\bigcap\limits_{t>0} S(t) \qquad \mbox{ with } S(t)= \{u(\cdot,t)>0\}\,.
  \label{eq:LimSWInfty-DInfty}
\end{equation}
Then $S\subset \{g=\gmax\}$ holds and for any sequence $t_k\to\infty$ there exists a subsequence (not relabeled) and a Radon measure $\mu$ on $\Gamma$ with support in $\bar S$ such that
\begin{equation}
  \mu = \lim_{k\to\infty} u(\cdot,t_k)\calH^2 \quad\textrm{ in the sense of Radon measures on }\Gamma\,.
  \label{eq:CompactUT}
\end{equation}
\end{theorem}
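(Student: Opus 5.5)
By Corollary \ref{Cor:Mon}, $\alpha$ is nonincreasing; using the left-continuous representative we set $\alpha_\infty:=\lim_{t\to\infty}\alpha(t)$. The lower bound $\alpha(t)\geq\alpha_*$ is immediate: by \eqref{4eq:UEps0DInfty2} we have $\int_{S(t)}(1-(1+\alpha)g)\,d\sigma=0$ with $g\leq\gmax$, which gives $1\leq(1+\alpha)\gmax$. Proposition \ref{prop:MonSupp} together with continuity of $u$ (from \eqref{eq:RepForm}) shows that $S(t)$ is decreasing in $t$. Hence $S:=\bigcap_{t>0}S(t)=\bigcap_{n\in\N}S(n)$ is a Borel set, and each $S(n)$ is relatively open.

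\emph{Step 1: $S\subset\{g=\gmax\}$ and $\alpha_\infty=\alpha_*$.} Let $x\in S$. Then $u(x,t)>0$ for all $t$. By \eqref{eq:RepForm},
\[
u(x,t)=u_0(x)+\int_0^t\big(-(1-g(x))+\alpha(\tau)g(x)\big)\,d\tau\geq0 \quad\text{for all } t.
\]
Dividing by $t$ and letting $t\to\infty$ gives $-(1-g(x))+\alpha_\infty g(x)\geq0$, i.e. $g(x)\geq 1/(1+\alpha_\infty)$. If $|S|>0$, then passing to the limit in $\int_{S(t)}(1-(1+\alpha)g)=0$ by monotone convergence of $S(t)\downarrow S$ yields $\int_S(1-(1+\alpha_\infty)g)=0$. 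Combined with the pointwise inequality $1-(1+\alpha_\infty)g\leq0$ on $S$, this forces $g=1/(1+\alpha_\infty)$ a.e. on $S$.

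\emph{Step 2: conclude $\alpha_\infty=\alpha_*$ in all cases.} This is the main obstacle, and in particular the case where $|S|=0$ or $S$ sits inside a level set below $\gmax$ must be ruled out. Suppose $\alpha_\infty>\alpha_*$, and set $c:=1/(1+\alpha_\infty)<\gmax$. Then on the open set $\{g>c\}$, which is nonempty, the integrand $-(1-g)+\alpha(\tau)g\geq -(1-g)+\alpha_\infty g>0$. By \eqref{eq:RepForm}, $u$ grows at least linearly in $t$ there. Mass conservation bounds $\int_\Gamma u=1$, so this is impossible unless $|\{g>c\}|=0$; but $g$ is continuous with $\max g=\gmax>c$, so $|\{g>c\}|>0$. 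This contradiction gives $\alpha_\infty=\alpha_*$. Step 1 then gives $g(x)\geq\gmax$, hence $g(x)=\gmax$ for every $x\in S$, pointwise and not only a.e.

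\emph{Step 3: the limit measure.} The measures $u(\cdot,t_k)\calH^2$ are nonnegative with total mass $1$ on the compact set $\Gamma$. Weak-$*$ compactness of Radon measures therefore gives a subsequence converging to some $\mu$, and mass $1$ is preserved by compactness of $\Gamma$. To show $\spt\mu\subset\bar S$, take $x\notin\bar S$ and a closed ball $B$ around $x$ disjoint from $\bar S$. The compact sets $\overline{S(n)}\cap B$ are decreasing, but the sets $S(n)$ themselves might not be closed, so the intersection argument needs care. Since $u$ is continuous, use instead $\{u(\cdot,n)\geq\eps\}$, which is compact. If these sets intersected $B$ for all $n$, their intersection would yield a point of $B$ in $S$, a contradiction; hence for each $\eps>0$ there is $n$ with $u(\cdot,n)<\eps$ on $B$. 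Monotonicity of the support only controls where $u$ vanishes, not its size, so we need additional smallness: for $t\geq n$ the representation formula gives $u(y,t)\leq(u(y,n)+\int_n^t(-(1-g)+\alpha g)\,d\tau)_+$. On $B\cap\{g<\gmax-\eta\}$ this integrand is eventually negative, since $\alpha\to\alpha_*$. On the part of $B$ near $\{g=\gmax\}$ we use the integrand bound $\leq(\alpha(\tau)-\alpha_*)\gmax$ together with $\int_B u(\cdot,t)\,d\sigma\leq|B|\sup_B u$. This gives $\mu(B^\circ)=0$, so $\spt\mu\subset\bar S$. The delicate point is bounding $\int_n^\infty(\alpha-\alpha_*)\,d\tau$; if it is not integrable, we argue directly that $u(\cdot,t)\to0$ pointwise outside $S$ and use the uniform $L^\infty$ bound on compact time intervals together with dominated convergence on $B$.
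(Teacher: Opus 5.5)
Your Steps 1 and 2 are correct, and they are shorter than the paper's Steps 2--6. The key difference is how you obtain $\alpha_\infty\le\alpha_*$. The paper introduces $\xi_\infty$ and combines $(1-g)\xi_\infty=g\alpha_\infty$ on $\Gamma\setminus S$ (with $\xi_\infty\le 1$) with $\alpha_\infty=(1-g)/g$ $\calH^2$-a.e.\ on $S$. This gives $\alpha_\infty\le(1-g)/g$ a.e.\ on $\Gamma$, which it evaluates at a maximum point of $g$. You instead observe that $u(\cdot,t)\ge t\big((1+\alpha_\infty)g-1\big)$ on the nonempty open set $\{g>1/(1+\alpha_\infty)\}$, which contradicts unit mass. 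Your Ces\`aro limit in \eqref{eq:RepForm} at points of $S$ likewise replaces the paper's local contradiction argument for $(1+\alpha_\infty)g\ge 1$ on $S$. It yields $S\subset\{g=\gmax\}$ pointwise with no further step. (The $|S|>0$ part of your Step 1 is not needed.)

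The gap is in Step 3, the inclusion $\spt\mu\subset\bar S$. You are right that monotonicity of the support alone, which is all the paper invokes here, only yields $\spt\mu\subset\bigcap_n\overline{S(n)}$. However, your repair does not close the gap, for three reasons.
\begin{itemize}
\item The compact sets $\{u(\cdot,n)\ge\eps\}\cap B$ are not nested in $n$: only the positivity sets shrink, while $u$ increases wherever $(1+\alpha)g>1$. So their being nonempty for all $n$ does not produce a point of $S$ in $B$.
\item On $B\cap\{g\ge\gmax-\eta\}$ your bound $u(y,t)\le u(y,n)+\gmax\int_n^t(\alpha-\alpha_*)\,d\tau$ is useless, because divergence of $\int^\infty(\alpha-\alpha_*)\,d\tau$ is the generic case, not an exceptional one. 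If the integral were finite, then $u\le\max_\Gamma u_0+\gmax\int_0^\infty(\alpha-\alpha_*)\,d\tau$ uniformly in time. Whenever $\calH^2(\{g=\gmax\})=0$ the mass would then tend to $0$. This divergence is exactly what drives the concentration in Theorem \ref{thm:Conc2}.
\item Your fallback does not help. The bound \eqref{eq:RescBdLinfty} holds only on bounded time intervals, not uniformly along $t_k\to\infty$. Pointwise decay of $u$ is only available where $g<\gmax$, and you have not excluded that $B$ meets $\{g=\gmax\}$, where $u$ may grow without bound.
\end{itemize}

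The missing observation is $\{g=\gmax\}\subset S$, which is immediate from \eqref{eq:RepForm}. At a point with $g(x)=\gmax$ the integrand equals $(1+\alpha)\gmax-1=\gmax(\alpha-\alpha_*)\ge 0$. Hence $u(x,t)=u_0(x)+\gmax\int_0^t(\alpha-\alpha_*)\,d\tau>0$ for every $t>0$ unless $\alpha\equiv\alpha_*$.

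In that exceptional case \eqref{4eq:UEps0DInfty2} reads $\int_{S(t)}(1-g/\gmax)\,d\sigma=0$, which forces the open set $S(t)$ into $\{g=\gmax\}$. The solution is then stationary, $u(\cdot,t)=u_0$, and the claim is trivial.

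Otherwise, combined with your Step 2, $S=\{g=\gmax\}$ is closed. So a closed ball $B$ disjoint from $\bar S$ satisfies $\max_B g\le\gmax-\eta$ for some $\eta>0$. Then on $B$, for all $\tau\ge T$,
\[
(1+\alpha(\tau))g-1\le\gmax(\alpha(\tau)-\alpha_*)-\eta/\gmax\le-\eta/(2\gmax).
\]
Hence $u(\cdot,t)\equiv 0$ on $B$ for $t\ge T+2\gmax\max_B u(\cdot,T)/\eta$, and $\mu(\mathrm{int}\,B)\le\liminf_k\int_{\mathrm{int}\,B}u(\cdot,t_k)\,d\sigma=0$. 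With this, no part of $B$ near $\{g=\gmax\}$ is left to treat.
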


\begin{proof}
\begin{steps}
\item\label{Thm3.4:step1} The set $S$ is Borel since it is given as countable intersection $\bigcap_{\ell\in\N}\{u(\cdot,\ell)>0\}$ of open sets (here we use the monotonicity of support).
The existence of some $\alpha_\infty$ with $\alpha(t)\searrow \alpha_\infty$ follows by the monotonicity of $\alpha$.
Since $\alpha(t)\geq\alpha_*$ by \eqref{4eq:UEps0DInfty2} we clearly have $\alpha_\infty\geq\alpha_*$.
Similarly, we have $\xi(\cdot,t)\to \xi_\infty$ in every $L^p(\Gamma)$, $1\leq p<\infty$ for some $0\leq \xi_\infty\leq 1$.

Moreover, the compactness property \eqref{eq:CompactUT} follows since the total mass of $u(\cdot,t)$ is constant in $t$.
By the monotonicity of the support of $u(\cdot,t_k)$ it follows that $\spt(\mu)\subset\bar S$ holds.

\item Fix an arbitrary $t_k>t_0$ with $t_k\to\infty$ and $G$ as above.
Then $u(\cdot,t_k)=0$ in $\{u(\cdot,t_0)=0\}$ and in this set
\begin{equation*}
  (1-g)\xi_\infty=(1-g)\lim_{k\to\infty}\xi(\cdot,t_k)
  = g \lim_{k\to\infty}\alpha(t_k)
  = g \alpha_\infty
\end{equation*}
holds.
This yields
\begin{equation}
  (1-g)\xi_\infty=g \alpha_\infty \quad\textrm{ a.e.~in }\mathsmaller\bigcup\limits_{t>0}\{u(\cdot,t)=0\}=\Gamma\setminus S\,.
\label{eq:1001}
\end{equation}

\item We claim that
\begin{equation}
  \alpha_\infty \geq \frac{1-g}{g}\geq \alpha_* \qquad\textrm{ in }S\,,
  \label{eq:TildeAlph}
\end{equation}
holds. 
Otherwise, there exists $x^*\in S$ with $\alpha_\infty < \frac{1-g(x^*)}{g(x^*)}$.
By continuity of $g$ and the monotone convergence $\alpha(t_k)\searrow \alpha_\infty$ there exist $\eps,\delta,T>0$ such that
\begin{equation*}
  \alpha(t) \leq \frac{1-g}{g}-\eps\quad\textrm{ in }B(x^*,\delta)\textrm{ for all }t\geq T\,.
\end{equation*}
This implies that for any $x\in B(x^*,\delta)$ and all $t\geq T$ with $x\in S(t)$ it holds
\begin{equation*}
  \partial_tu(x,t) = -1 + g(x)\big(1+\alpha(t)\big) \leq -\eps \min g\,.
\end{equation*}
By nonnegativity of $u$ we then conclude that $B(x^*,\delta)\cap S(t)=\emptyset$ for all $t$ sufficiently large, a contradiction to $x^*\in S$.
This proves \eqref{eq:TildeAlph}.

\item We claim that
\begin{equation}
  \alpha_\infty = \frac{1-g}{g} \quad\calH^2-\textrm{a.e.~in }S\,.
\label{eq:alphainftyIdent}
\end{equation}
Otherwise there exists $\eps>0$ and $x^*\in S$ with
\begin{equation*}
  \calH^2\big(S\cap B(x^*,r)\big)>0 \quad\textrm{ for all }r>0\,,\quad\textrm{ and }\alpha_\infty - \frac{1-g}{g}(x^*)> \eps\,.
\end{equation*}
By continuity of $g$ and monotone convergence of $\alpha(t)$ as $t \to \infty$ we conclude that for some $\delta>0$ it holds $\alpha - \frac{1-g}{g}(x^*)> \eps$ on $B(x^*,\delta)$, hence
\begin{equation*}
  \partial_tu = -1 + g\big(1+\alpha\big) \geq \eps \min_\Gamma g \quad\textrm{ in }S\cap B(x^*,\delta)\,,
\end{equation*}
which gives a contradiction to $\int_\Gamma u=1$ and proves \eqref{eq:alphainftyIdent}.

\item By $\xi_\infty\leq 1$, \eqref{eq:1001}, \eqref{eq:alphainftyIdent} we then have
\begin{equation*}
  \alpha_\infty \leq \frac{1-g}{g}\quad\textrm{ a.e.~in }\Gamma\,,
\end{equation*}
which, evaluated in $\argmax g$ and recalling $\alpha_\infty\geq\alpha_*$,  together with \eqref{eq:TildeAlph} implies $\alpha_\infty=\alpha_*$.

\item We finally prove that $g=\gmax$ holds in $S$.
Otherwise, there exists $x_* \in S$ such that $g(x_*)<\gmax$.
Together with the continuity of $g$ this yields the existence of $\eps>0$ and $\delta>0$ with
\begin{equation*}
  g(x) \leq \gmax-\eps \quad\textrm{ for all } x \in B(x_*,\delta)\,.
\end{equation*}
For any $\eps_1>0$ there exists $t_1=t_1(\eps_1)>0$ such that
\begin{align*}
  \alpha(t) \leq \alpha_*+\eps_1 \quad\textrm{ for all } t \geq t_*\,
\end{align*}
We then deduce for $\eps_1>0$ sufficiently small that in $B(x_*,\delta)\cap S$
\begin{align*}
  \partial_t u(\cdot,t) &=-1+g(1+\alpha(t)) \\
  &\leq -1+\left(\gmax-\eps\right)\left(1+\alpha_*+\eps_1\right) 
  \leq -\eps(1+\alpha_*)+\gmax\eps_1<0 
\end{align*}
for all $t>t_1$, a contradiction to $x_*\in S$.
\end{steps}
\end{proof}

Our main result is the following characterization of the limit.
\begin{theorem}[Distribution of limit measures]\label{thm:Conc2}
Consider $S$, $\alpha_*$ as in \eqref{eq:LimSWInfty-DInfty} and assume $\calH^2(S)=0$.
Let a sequence $t_k\to\infty$ and a Radon measure $\mu$ on $\Gamma$ with support in $S$ be given such that
\begin{equation}
  \mu = \lim_{k\to\infty} u(\cdot,t_k)\calH^2 \quad\textrm{ in the sense of Radon measures on }\Gamma\,.
  \label{eq:LimMu}
\end{equation}
Then there exists $\omega(k)\to 0$ for $k\to\infty$ such that for any measurable set $A\subset \Gamma$ with $\mu(A)>0$
\begin{equation*}
  \mu(A) = \lim_{k\to\infty}
  \frac{
    \int_A
      \big(g-(\gmax-\omega(k))\big)_+\,d\sigma
  }{
    \int_\Gamma
      \big(g-(\gmax-\omega(k))\big)_+\,d\sigma
  }\,.
\end{equation*}
\end{theorem}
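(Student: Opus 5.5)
The plan is to use the representation formula \eqref{eq:RepForm} and make it explicit for the time-independent signal $g$. Set $A(t):=\int_0^t\alpha(\tau)\,d\tau$. Then \eqref{eq:RepForm} reads
\begin{equation*}
  u(\cdot,t)=\big(u_0-t+(t+A(t))g\big)_+ = (t+A(t))\Big(g-\frac{t-u_0}{t+A(t)}\Big)_+ .
\end{equation*}
Define the threshold $\lambda(t):=t/(t+A(t))$. Since $\alpha(t)\searrow\alpha_*$ by Theorem \ref{thm:Conc1}, we have $A(t)/t\to\alpha_*$, so $\lambda(t)\to 1/(1+\alpha_*)=\gmax$. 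The natural choice is $\omega(k):=\gmax-\lambda(t_k)$. One has to check that $\omega(k)\to 0$ and that $\omega(k)>0$ for large $k$. Positivity should follow from mass conservation: if $\lambda(t_k)\geq\gmax$, then the support of $u(\cdot,t_k)$ would be contained in $\{g>\gmax-u_0/(t+A)\}$, and the mass $\int u=1$ must be carried by $(t+A)\,(g-\lambda+u_0/(t+A))_+$. I will simply take $\omega(k):=\gmax-\lambda(t_k)$ and treat the $u_0$ term as a perturbation. By uniform continuity $u_0\le C$, so the shift it causes is $O(1/t_k)$.

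The main step is then the following. Write $c_k:=\gmax-\lambda(t_k)$ and $\eta_k:=\|u_0\|_\infty/(t_k+A(t_k))=O(1/t_k)$. Then
\begin{equation*}
  (t_k+A)\big(g-\gmax+c_k-\eta_k\big)_+\le u(\cdot,t_k)\le (t_k+A)\big(g-\gmax+c_k+\eta_k\big)_+ .
\end{equation*}
Integrating over $\Gamma$ and using $\int u=1$ gives $\int_\Gamma(g-\gmax+c_k\pm\eta_k)_+\,d\sigma$ bounded from the appropriate sides by $1/(t_k+A)$. Integrating over a set $A$ gives the same two-sided bounds for $\int_A u(\cdot,t_k)$. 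Dividing, $\int_A u(\cdot,t_k)$ is sandwiched between ratios
\begin{equation*}
  \frac{\int_A(g-\gmax+c_k\mp\eta_k)_+}{\int_\Gamma(g-\gmax+c_k\pm\eta_k)_+}.
\end{equation*}
The key task is to show that these ratios, with $\pm\eta_k$, are asymptotically equal to the ratio with $\omega(k)=c_k$. Equivalently, $\int_\Gamma(g-\gmax+c_k+\eta_k)_+ / \int_\Gamma(g-\gmax+c_k-\eta_k)_+\to1$ and similarly on $A$.

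This is the step I expect to be the obstacle, because $\eta_k\sim 1/t_k$ need not be small relative to $c_k$. The way around it is to exploit that $\calH^2(S)=0$ forces $c_k\to0$ slowly compared with the size of the superlevel sets. Concretely, put $F(s):=\int_\Gamma(g-\gmax+s)_+\,d\sigma=\int_0^s|\{g>\gmax-r\}|\,dr$, which is convex and increasing. Mass conservation gives $F(c_k)\approx 1/t_k$ (more precisely $1/(t_k+A)$). Since $|\{g>\gmax-r\}|\to\calH^2(\{g=\gmax\})$, and this is zero (if it were positive we would need $S$ of positive measure by Theorem \ref{thm:Conc1} and the support monotonicity; I will argue this or reduce to it), we have $F(s)=o(s)$. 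Hence $c_k\gg1/t_k\sim\eta_k$. Convexity then gives
\begin{equation*}
  F(c_k+\eta_k)-F(c_k-\eta_k)\le 2\eta_k\,|\{g>\gmax-c_k-\eta_k\}| .
\end{equation*}
Comparing this to $F(c_k)\ge (c_k/2)\,|\{g>\gmax-c_k/2\}|$ still needs a doubling-type control. If that control is unavailable, I would instead absorb the discrepancy by redefining $\omega(k)$ through the exact identity $F(\omega(k))=1/(t_k+A)$ and estimating the $u_0$ error using $u_0$ continuous. In any case I would choose $\omega(k)$ so that the normalization is exact, and then control the error term $\int|u(\cdot,t_k)-(t_k+A)(g-\gmax+\omega)_+|$ in $L^1$ by $o(1)$.

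Finally, I pass to the limit. Weak convergence $u(\cdot,t_k)\calH^2\to\mu$ gives $\mu(A)=\lim\int_A u(\cdot,t_k)$ for sets with $\mu(\partial A)=0$. For general measurable $A$ I interpret the statement through the densities: the $L^1$-closeness shows that the normalized measures $(g-\gmax+\omega(k))_+/F(\omega(k))$ have the same weak limit $\mu$. The identity for general $A$ with $\mu(A)>0$ then follows in the same sense by approximation of $A$ from inside by compact sets and from outside by open sets. Since $\spt\mu\subset S$, the argument also uses the localization from Theorem \ref{thm:Conc1}.
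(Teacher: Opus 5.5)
Your framework matches the paper's: the representation formula, the explicit threshold $\lambda_k=t_k/(t_k+A(t_k))$ with $\omega(k)=\gmax-\lambda_k$ (the set $\{g>\lambda_k\}$ is exactly the set $\tilde S_k$ in the paper's proof), and $u_0$ treated as a perturbation. But the decisive step is left open. You call it the main obstacle and suggest it needs a doubling property of $r\mapsto|\{g>\gmax-r\}|$, which is not available for a general continuous $g$. Your fallback (``control the error term in $L^1$ by $o(1)$'') is asserted without a reason.

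The missing observation is that the total mass is $1$, so the $u_0$-perturbation only has to be small in \emph{absolute} $L^1$ terms; its size relative to $c_k$ plays no role. Set $\Phi_k:=(t_k+A(t_k))(g-\lambda_k)$. Pointwise, $0\le u(\cdot,t_k)-(\Phi_k)_+\le u_0\,\chi_{\{u(\cdot,t_k)>0\}}$. Hence $\|u(\cdot,t_k)-(\Phi_k)_+\|_{L^1(\Gamma)}\le\int_{\{u(\cdot,t_k)>0\}}u_0\,d\sigma\to0$, because $\{u(\cdot,t_k)>0\}\downarrow S$ and $\calH^2(S)=0$. Consequently $\int_\Gamma(\Phi_k)_+\,d\sigma=1+o(1)$, which also gives $\omega(k)>0$ for large $k$. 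Moreover $\bigl|\int_A u(\cdot,t_k)\,d\sigma-\int_A(g-\lambda_k)_+\,d\sigma\big/\!\int_\Gamma(g-\lambda_k)_+\,d\sigma\bigr|\le2\int_{\{u(\cdot,t_k)>0\}}u_0\,d\sigma$ uniformly in $A$. This is how the paper concludes.

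In your own notation, the repair is to compare your convexity bound $F(c_k+\eta_k)-F(c_k-\eta_k)\le2\eta_k|\{g>\gmax-c_k-\eta_k\}|$ with $1/(t_k+A(t_k))=\eta_k/\|u_0\|_\infty$, not with $\tfrac{c_k}{2}|\{g>\gmax-\tfrac{c_k}{2}\}|$. Using $(t_k+A(t_k))F(c_k)\le1\le(t_k+A(t_k))F(c_k+\eta_k)$, the relative discrepancy is then at most $2\|u_0\|_\infty|\{g>\gmax-c_k-\eta_k\}|\to0$. So neither a doubling condition nor the detour via $c_kt_k\to\infty$ is needed. This variant still requires $\calH^2(\{g=\gmax\})=0$, which you only announce; working with $\{u(\cdot,t_k)>0\}$ avoids that. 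What you need is this absolute estimate, uniform in $A$, not a relative comparison of the two bounds on each $A$.

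Your final step is also incorrect. Inner and outer approximation only give $\mu(A^\circ)\le\liminf_k\int_A u(\cdot,t_k)\,d\sigma\le\limsup_k\int_A u(\cdot,t_k)\,d\sigma\le\mu(\bar A)$. The identity genuinely fails off $\mu$-continuity sets: for $A=S$ one has $\mu(A)=1$, while the right-hand side vanishes for every $k$ because $\calH^2(S)=0$. So the result can only hold for $A$ with $\mu(\partial A)=0$. Equivalently, it holds in the sense of weak convergence to $\mu$ of the normalized measures $\bigl(g-(\gmax-\omega(k))\bigr)_+\calH^2\big/\int_\Gamma\bigl(g-(\gmax-\omega(k))\bigr)_+\,d\sigma$. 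The paper's proof tacitly makes the same restriction when it writes $\mu(A)=\lim_k\int_A u(\cdot,t_k)\,d\sigma$.
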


\begin{proof}
Let
\begin{equation*}
  \varphi := (\alpha-\alpha_*)\,\gmax\,,\qquad
  h := \frac{\gmax-g}{\gmax}\,.
\end{equation*}
Then $g = (1-h)\,\gmax$ and $\alpha = \frac{\varphi}{\gmax} + \alpha_*$.

Consequently, we obtain
\begin{align*}
  \partial_t u &= -(h-\varphi+h\varphi)\,\chi_{\{u>0\}} \quad\textrm{ in }\Gamma\times (0,\infty)\,,\\
  \varphi(t) &= \frac{\int_{\{u(\cdot,t)>0\}}h\,d\sigma}{\int_{\{u(\cdot,t)>0\}}(1-h)\,d\sigma}\quad\textrm{ for a.e.~}t>0\,.
\end{align*}
and from \eqref{eq:RepForm} that

\begin{align*}
  u(\cdot,t)
  = \left(u_0 + \int_0^t (\varphi - h - \varphi h)(\cdot,\tau)\dd\tau\right)_{+}\,.
\end{align*}
By mass conservation, we may assume without loss of generality that
\begin{equation*}
  \int_{\{u(\cdot,t)>0\}} u(\cdot,t)\,d\sigma
  =\int_{\Gamma} u(\cdot,t)\,d\sigma= 1 \quad \mbox{ for all } t>0\,.
\end{equation*}
For any measurable subset $A \subset \Gamma$ we deduce
\begin{align}
  \int_{A} u(\cdot,t)\,d\sigma
  &= \frac{
    \int_{A\cap\{u(\cdot,t)>0\}}\left(u_{0} + \int_0^t(\varphi-h-\varphi h)(\cdot,\tau)\dd\tau\right)\,d\sigma 
  }{
    \int_{\{u(\cdot,t)>0\}}\left(u_{0} + \int_0^t(\varphi-h-\varphi h)(\cdot,\tau)\dd\tau\right)\,d\sigma
  }\,. 
  \label{4eq:MuA}
\end{align}
By the definition of $S$ and the assumption $\calH^2(S)=0$ we have 
\begin{equation*}
  \lim_{k\to\infty}\int_{\{u(\cdot,t_k)>0\}} u_{0}\,d\sigma =0\,.
\end{equation*}
If $\mu(A)>0$ we further deduce
\begin{align}
  0<\mu(A)
  &= \lim_{k\to\infty}\int_{A} u(\cdot,t_k)\,d\sigma\nonumber\\
  &= \lim_{k\to\infty}\Biggl[
    \int_{A\cap \{u(\cdot,t_k)>0\}} u_{0}\,d\sigma
    + \int_{A\cap\{u(\cdot,t_k)>0\}} \int_{0}^{t_k}\bigl((1-h)\varphi(\tau)-h\bigr)\dd\tau\,d\sigma
  \Biggr] \nonumber\\
  &= \lim_{k\to\infty}\int_{A\cap \{u(\cdot,t_k)>0\}} \int_{0}^{t_k}\bigl((1-h)\varphi(\tau)-h\bigr)\dd\tau\,d\sigma\,.
  \label{4eq:MuA-2}
\end{align}
Consequently, there exists a sequence $\omega_k\to 0$ as $k\to\infty$ such that
\begin{align*}
  &\int_{A\cap\{u(\cdot,t_k)>0\}} u_{0}\,d\sigma
  + \int_{A\cap \{u(\cdot,t_k)>0\}} \int_{0}^{t_k}\bigl((1-h)\varphi(\tau)-h\bigr)\dd\tau\,d\sigma\\
  &\qquad = (1+\omega_k)\int_{A\cap\{u(\cdot,t_k)>0\}} \int_{0}^{t_k}\bigl((1-h)\varphi(\tau)-h\bigr)\dd\tau\,d\sigma\,.
\end{align*}
Similarly, there exists a sequence $\tilde\omega_k\to 0$ as $k\to\infty$ such that
\begin{align*}
  &\int_{\{u(\cdot,t_k)>0\}} u_{0}\,d\sigma
  + \int_{\{u(\cdot,t_k)>0\}} \int_{0}^{t_k}\bigl((1-h)\varphi(\tau)-h\bigr)\dd\tau\,d\sigma\\
  &\qquad = (1+\tilde\omega_k)\int_{\{u(\cdot,t_k)>0\}} \int_{0}^{t_k}\bigl((1-h)\varphi(\tau)-h\bigr)\dd\tau\,d\sigma\,.
\end{align*}

We then deduce from \eqref{4eq:MuA}
\begin{equation*}
  \mu(A)
  = \lim_{k\to\infty}
  \frac{
    \int_{A\cap\{u(\cdot,t_k)>0\}}
      \left[(1-h)\int_0^{t_k}\varphi(\tau)\dd\tau - t_kh\right]\,d\sigma
  }{
    \int_{\{u(\cdot,t_k)>0\}}
      \left[(1-h)\int_0^{t_k}\varphi(\tau)\dd\tau - t_kh\right]\,d\sigma
  }\,. 
\end{equation*}

Consider
\begin{equation*}
  \tilde{S}_{k}
  :=
  \left\{
    x\in\Gamma :
    \int_{0}^{t_k}\bigl((1-h(x))\varphi(\tau)-h(x)\bigr)\dd\tau > 0
  \right\}
  \subset
  \{ u(\cdot,t_k)>0\}\,. 
\end{equation*}
Then, for $x\in \{u(\cdot,t_k)>0\}\setminus \tilde{S}_{k}$ we have
\begin{equation*}
  0 < u(x,t_k)
  = u_{0}(x) - \left|\int_{0}^{t_k}\bigl((1-h(x))\varphi(\tau)-h(x)\bigr)\dd\tau\right|\,.
\end{equation*}
Hence,
\begin{align*}
  &\Big|\int_{\{u(\cdot,t_k)>0\}\setminus \tilde{S}_{k}}
  \int_{0}^{t_k}\bigl((1-h)\varphi(\tau)-h\bigr)\dd\tau\,d\sigma\Big| \leq   \int_{\{u(\cdot,t_k)>0\}\setminus \tilde{S}_{k}}
  u_0\,d\sigma\,\to\,0\,(k\to\infty)\,.
\end{align*}
Thus, up to negligible terms,
\begin{equation*}
  \int_{\Gamma} u(\cdot,t_k)\dd S
  = \int_{\tilde{S}_{k}}
      \left((1-h)\int_{0}^{t_k}\varphi(\tau)\dd\tau - t_kh\right)\,d\sigma\,,
\end{equation*}
and, using in addition $h\Chi_{\{u(\cdot,t)>0\}}\to 0$ in $L^1(\Gamma)$ for $t\to\infty$,
\begin{equation*}
  \mu(A)
  = \lim_{k\to\infty}
  \frac{
    \int_{\{\,h < \fint_0^{t_k}\varphi(\tau)\dd\tau\,\}\cap A}
      \left(\fint_0^{t_k}\varphi(\tau)\dd\tau - h\right)\,d\sigma
  }{
    \int_{\{\,h < \fint_0^{t_k}\varphi(\tau)\dd\tau\,\}}
      \left(\fint_0^{t_k}\varphi(\tau)\dd\tau - h\right)\,d\sigma
  }\,. 
\end{equation*}

Now $\fint_0^{t_k}\varphi(\tau)\dd\tau \to 0$ as $k\to\infty$, since $\varphi(\tau)\to 0$, and we therefore obtain
\begin{align*}
  \mu(A)
  &= \lim_{k\to\infty}
  \frac{
    \int_{\{\,h < \omega(k)\,\}\cap A}
      \left(\omega(k) - h\right)\,d\sigma
  }{
    \int_{\{\,h < \omega(k)\,\}}
      \left(\omega(k) - h\right)\,d\sigma
  }
  = \lim_{k\to\infty}
  \frac{
    \int_A
      \left(\omega(k) - h\right)_+\,d\sigma
  }{
    \int_\Gamma
      \left(\omega(k) - h\right)_+\,d\sigma
  }\,,
\end{align*}
with $\omega(k)\to 0$ for $k\to\infty$.

\end{proof}

\subsection{The rescaled system for the bulk--surface PDE system}\label{Ss.finiteD}

Consider for $m>0$ a solution $(u,\xi,w)$ of \eqref{eq:udk-1}-\eqref{eq:udk-3b} with $\int_\Gamma u(\cdot,0)\,d\sigma=\int_\Gamma u(\cdot,t)\,d\sigma=m$. 
In the following, for ease of notation, we assume $D=1$.

We again assume that $g=g(x)$ is independent of time and consider the rescaled functions $u_m,\xi_m:\Gamma\times (0,\infty)\to\R$, $w_m:\overline{\Omega}\times(0,\infty)\to\R$, $u_{0,m}:\Gamma\to\R$ given by
\begin{equation*}
  u_m(x,t)=\frac{1}{m}u(x,mt)\,,\quad \xi_m(x,t)=\xi(x,mt)\,,\quad
  w_m(x,t)=w(x,mt)\,,\quad u_{0,m}(x)=\frac{1}{m}u(x,0)\,.
\end{equation*}
Then $(u_m,\xi_m,w_m)$ is the unique solution of the system
\begin{align}
  \partial_t u_m & =m \Delta u_m-(1-g)\xi_m + w_mg & \textrm{ on } \Gamma\times (0,\infty)\label{eq:RescD-1}\\
  w_m(\cdot,t) &=L_{g}\big((1-g)\xi_m(\cdot,t)\big)& \textrm{ for a.e.~}t>0\,, \label{eq:RescD-2}\\
  u_m &\geq 0\,\quad 
  0\leq \xi_m\leq 1\,,\quad 
  u_m\xi_m = u_m & \textrm{ a.e~on } \Gamma\times (0,\infty)\,, \label{eq:RescD-4}
  \\
  u_m(\cdot,0) &= u_{0,m}& \textrm{ on } \Gamma\,, \label{eq:RescD-5}
\end{align}
see Remark \ref{rem:Lh} for the definition and some properties of the family of operators $L_h$.
For general $D>0$ the only change would be that $w_m(\cdot,t)=\tfrac{1}{D}L_{\tfrac{g}{D}}\big((1-g)\xi_m(\cdot,t)\big)$.

Again, the mass conservation property \eqref{eq:RescMassCons} holds.

For the solutions $(u_m,\xi_m,w_m)_{m>0}$ to a family of initial data satisfying $\int_\Gamma u_{0,m}\,d\sigma=1$ for all $m>0$ and \eqref{eq:RescAssUm0} we obtain in the limit $m\to\infty$ a bulk--surface system.
\begin{lemma}[Limit system]
With $m\to 0$ the solutions $(u_m)_{m>0}$ converge in $L^2_{\loc}(\Gamma\times (0,\infty))$ to the unique solution $(u,\xi,w)$ of
\begin{align}
  \partial_t u &= -(1-g)\xi + w g & \textrm{ on } \Gamma\times (0,\infty)\label{eq:65a}\\
  w(\cdot,t) &=L_g\big((1-g)\xi(\cdot,t)\big)& \textrm{ for a.e.~}t>0\,, \label{eq:66}\\
  u &\geq 0\,\quad 
  0\leq \xi\leq 1\,,\quad 
  u\xi = u & \textrm{ a.e~on } \Gamma\times (0,\infty)\,, \label{eq:65}
  \\
  u(\cdot,0) &= u_0& \textrm{ on } \Gamma\,. \label{eq:66b}
\end{align}
\end{lemma}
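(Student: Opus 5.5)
We follow the proof of Lemma \ref{lem:LimSys} step by step and replace $\alpha_m$ by $w_m$; the only genuinely new point is how to pass to the limit in the product $w_m g$. First we collect uniform bounds. Lemma \ref{L.aprioriwk} holds verbatim for the rescaled system, so $\alpha_*\le w_m\le\alpha^*$. The maximum principle then gives $0\le u_m\le C(T)$ on $\Gamma_T$. Testing \eqref{eq:RescD-1} with $-\Delta u_m$ and using $\Delta u_m=0$ a.e. on $\{u_m=0\}$ produces the analogue of \eqref{eq:RescGradBd}. There is one extra term, $\int_\Gamma \nabla(w_m g)\cdot\nabla u_m$, and we control it by $\|w_m(\cdot,t)\|_{H^1(\Gamma)}$. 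The bound on this norm comes from \eqref{eq:LhCont}: since $(1-g)\xi_m$ is bounded in $L^\infty\subset L^4(\Gamma)$, $w_m(\cdot,t)$ is bounded in $W^{1,4}(\Gamma)$ uniformly in $t$. Gronwall's lemma together with \eqref{eq:RescAssUm0} then yields
\[
\|u_m\|_{L^\infty(0,T;H^1(\Gamma))}+\sqrt m\|\Delta u_m\|_{L^2(\Gamma_T)}+\|\partial_t u_m\|_{L^2(\Gamma_T)}\le C(T).
\]
By Rellich, along a subsequence $u_m\to u$ strongly in $L^2(\Gamma_T)$ and weakly in $H^1(\Gamma_T)$, and $\xi_m\weakstarto\xi$ in $L^\infty(\Gamma_T)$.

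Next we identify the limit of $w_m$. Since $L_g$ is linear and continuous from $L^2(\Gamma)$ to $L^2(\Gamma)$ (Remark \ref{rem:Lh}), it induces a continuous linear map on $L^2(\Gamma_T)$ acting pointwise in $t$, and this map is weakly continuous. Therefore $w_m=L_g((1-g)\xi_m)\weakto L_g((1-g)\xi)=:w$, which gives \eqref{eq:66}. Weak convergence of $w_m$ is enough for the equation, because $g$ is a fixed function and $w_m g$ enters linearly. Testing \eqref{eq:RescD-1} with $\varphi\in C^0_c(0,T;C^2(\Gamma))$ and moving $m\Delta$ onto $\varphi$ makes the diffusion term vanish as $m\to0$, and \eqref{eq:65a} follows. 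The complementarity relation $u\xi=u$ follows exactly as in \eqref{eq:RescLim-3}, using strong convergence of $u_m$ and weak* convergence of $\xi_m$. The constraints $u\ge0$ and $0\le\xi\le1$ pass to the limit, and so does the initial condition, because $u_m$ is bounded in $H^1(\Gamma_T)$.

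Finally we need uniqueness of the limit problem, so that the whole family converges. This is the step I expect to be the main obstacle, since here $w$ is a nonlocal operator of $\xi$ rather than a scalar. I would adapt the $L^1$-contraction argument of \cite[Theorem 3.1]{LNRV21}, which was proved for the full system with diffusion. For two solutions we compute $\frac{d}{dt}\int_\Gamma (u_1-u_2)_+$. The terms $-(1-g)(\xi_1-\xi_2)$ are handled pointwise using the monotone structure of $\xi$ and $u$. The coupling term $\int g(w_1-w_2)\,\mathrm{sgn}_+(u_1-u_2)$ is handled through self-adjointness of $L_g$ and the positivity and $L^1$-contractivity of the map $v\mapsto v-gL_g(v)$, which is the boundary flux balance. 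None of these steps uses the surface Laplacian, so the argument should carry over once it is checked that every use of the Laplacian there only contributed a term with a favorable sign. With uniqueness in hand, the standard identification and continuation argument gives convergence of the whole family $(u_m)$ in $L^2_{\loc}$.
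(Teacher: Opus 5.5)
Your proposal is correct and follows essentially the paper's route: uniform bounds as in Lemma~\ref{lem:LimSys}, compactness, identification of \eqref{eq:66} through the continuity of $L_g$, and uniqueness via an $L^1$-contraction. You also usefully make explicit the extra term $\int_\Gamma \nabla(w_m g)\cdot\nabla u_m\,d\sigma$, controlled via \eqref{eq:LhCont}, which the paper passes over. For the contraction the paper cites \cite[Theorem~4.1]{LNRV21}, the bulk--surface result, rather than Theorem~3.1.

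In that uniqueness step, the property actually used is not an $L^1$-contraction property of $v\mapsto v-gL_g(v)$. It is the sign identity $\int_\Gamma s\,(\eta-gL_g\eta)\,d\sigma=\int_\Gamma \eta\,\big(s-L_g(gs)\big)\,d\sigma\ge 0$ for $\eta=(1-g)(\xi_1-\xi_2)$. This follows from self-adjointness and $0\le L_g(gs)\le L_g(g)=1$, once the selection $s\in[0,1]$ of $\mathrm{sgn}_+(u_1-u_2)$ is chosen according to the sign of $\eta$ on $\{u_1=u_2=0\}$. That choice is admissible because $\partial_t(u_1-u_2)=0$ a.e.\ there.
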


\begin{proof}
Choose any $T>0$. It follows as in Lemma \ref{L.aprioriwk} that 
\begin{equation}
  \alpha_*\leq w_m\leq \alpha^*\,.
  \label{eq:BoundsW}
\end{equation}

As in Lemma \ref{lem:LimSys} we deduce uniform bounds
\begin{align}
  \|u_m\|_{L^\infty(\Gamma_T)}+\|u_m\|_{L^\infty(0,T;H^1(\Gamma))} +\sqrt m\|\Delta u_m\|_{L^2(\Gamma_T)}+\|\partial_t u_m\|_{L^2(\Gamma_T)}\leq C(T)\,.
\end{align}
We conclude that there exists a subsequence $m_k\to 0$ ($k\to\infty$), and functions $u\in H^1(\Gamma_T)$, $\xi\in L^\infty(\Gamma_T)$, $w\in L^\infty(\Gamma_T)$ such that
\begin{align}
  u_{m_k}&\to u \quad\textrm{ in }L^2(\Gamma_T)\,,\\
  u_{m_k}&\weakto u \quad\textrm{ in }H^1(\Gamma_T)\,\\
  \xi_{m_k}&\weakstarto \xi \quad\textrm{ in }L^\infty(\Gamma_T)\,,\\
  w_{m_k}&\weakstarto w \quad\textrm{ in }L^\infty(\Gamma_T)\,.
\end{align}

By a straightforward adaptation of the arguments in the proof of Lemma \ref{lem:LimSys} we deduce that \eqref{eq:65a} and \eqref{eq:65} hold.
By \eqref{eq:LhCont} and an identification argument we deduce that \eqref{eq:66} holds.

Following the proof of \cite[Theorem 4.1]{LNRV21} we obtain an $L^1$-contraction property for solutions to \eqref{eq:65a}-\eqref{eq:66b}, which implies uniqueness of solutions.
By a standard identification and continuation argument this shows the convergence of the whole sequence towards the unique solution of the limit system.
\end{proof}

\begin{proposition}[Monotonicity of support]
\label{4prop:Mono-spt}
For almost all $0\leq t_0<t$ we have
\begin{equation*}
  u(\cdot,t)=0 \quad\textrm{ a.e.~in }\{u(\cdot,t_0)=0\}\,.
\end{equation*}
\end{proposition}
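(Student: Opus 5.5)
We adapt the proof of Proposition~\ref{prop:MonSupp}; the new ingredient is that the Lagrange multiplier is now the nonlocal function $w(\cdot,t)=L_g\big((1-g)\xi(\cdot,t)\big)$ rather than a constant. As before we take $t_0=0$ to be a good time, i.e.\ a Lebesgue point of $\xi$ and $w$. We set $\Chi_0:=\Chi_{\{u_0>0\}}$ and $\Chi(\cdot,t):=\Chi_{\{u(\cdot,t)>0\}}$. The reference multiplier $\xi_0$ equals $1$ on $\{u_0>0\}$. On $\{u_0=0\}$ it is defined by the steady relation $(1-g)\xi_0=w_0 g$ with $w_0=L_g((1-g)\xi_0)$, obtained as the Lebesgue-point limit of $\xi(\cdot,\tau)$ as $\tau\downarrow 0$ using the continuity \eqref{eq:LhCont}. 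For a.e.\ $t$ we also have $(1-g)\xi(\cdot,t)=w(\cdot,t)g$ a.e.\ on $\{u(\cdot,t)=0\}$. The quantity to control is $M(t):=\int_\Gamma(1-\Chi_0)u(\cdot,t)\,d\sigma$, the mass outside the initial support. We will show $\frac{d}{dt}M\le 0$; since $M(0)=0$ and $u\ge0$, this gives the claim.

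\textbf{Step 1: rewrite the derivative.} We compute
\[
\frac{d}{dt}M=\int_\Gamma (1-\Chi_0)\Chi\big(-(1-g)\xi+wg\big)\,d\sigma
=\int_\Gamma (1-\Chi_0)\Chi\big(-(1-g)(\xi-\xi_0)+(w-w_0)g\big)\,d\sigma,
\]
using $(1-g)\xi_0=w_0g$ on $\{u_0=0\}$. On $\{u(\cdot,t)>0\}\cap\{u_0=0\}$ we have $\xi-\xi_0=1-\xi_0\ge 0$, so the first term is nonpositive. It therefore remains to control the $w$-difference term $\int(1-\Chi_0)\Chi\, g\,(w-w_0)$.

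\textbf{Step 2: the key structure, a comparison for $w-w_0$.} Set $z:=w-w_0$ and $\zeta:=\xi-\xi_0$. Then $z=L_g\big((1-g)\zeta\big)$, i.e.\ $z$ is harmonic in $\Omega$ with $\partial_n z+gz=(1-g)\zeta$ on $\Gamma$. On $\Gamma$ we split into three regions:
\begin{itemize}
\item on $\{u_0>0\}$ we have $\zeta=\xi-1\le0$;
\item on $\{u(\cdot,t)=0\}\cap\{u_0=0\}$ we have $(1-g)\zeta=gz$, so there the boundary condition reduces to the homogeneous Neumann condition $\partial_n z=0$;
\item on $E:=\{u(\cdot,t)>0\}\cap\{u_0=0\}$ we have $\zeta\ge0$.
\end{itemize}
This is exactly the structure exploited through \eqref{Lformula}: $z=L_{g\tilde\chi}\big((1-g)\zeta\tilde\chi\big)$ with $\tilde\chi$ the indicator of $\{u_0>0\}\cup E$. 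We test the elliptic problem for $z$ with suitable functions, in analogy with the cross-multiplication in \eqref{eq:AlphaDiff}. The candidate is a positive-part argument: test with $z_+$, or compare $z$ with $L_{g\tilde\chi}$ applied to the positive part of the data, which is supported on $E$. Using self-adjointness, $L_h(h)=1$ and the monotonicity \eqref{Lmonotones}, the aim is
\[
\int_E g z\,d\sigma\le\int_E (1-g)\zeta\,d\sigma .
\]
The intuition is that the positive source on $E$ is partly absorbed back on $\{u_0>0\}\cup E$ through the Robin term, so $z$ cannot exceed its source on $E$ in integrated form. Inserting this bound into Step~1 gives $\frac{d}{dt}M\le 0$.

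\textbf{Expected obstacle.} The main difficulty is Step~2, the integrated inequality for the nonlocal $L_g$. In the case $D=\infty$ the factor $(\alpha-\alpha_0)$ was a scalar and the cancellation was algebraic; here one must show that $L_{g\tilde\chi}$ is ``sub-Markovian'' in the weighted sense $\int g\tilde\chi\,L_{g\tilde\chi}(f)\le\int f$ for $f\ge0$. By self-adjointness and $L_{g\tilde\chi}(g\tilde\chi)=1$ this holds with equality over the whole of $\{u_0>0\}\cup E$. Restricting to $E$ requires $L_{g\tilde\chi}(f)\ge0$ on the complement, which follows from the maximum principle. The subtle part is handling the negative part of the data on $\{u_0>0\}$, which enters with the favourable sign after decomposing $z=z^{(+)}+z^{(-)}$ by linearity. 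A secondary technical point is justifying the Lebesgue-point definition of $\xi_0,w_0$ at a.e.\ $t_0$, which uses \eqref{eq:LhCont}.
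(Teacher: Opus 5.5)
Your proposal is correct and is essentially the paper's proof. The paper also uses the representation $w-w_0=L_{g\Chi^*}\big((1-g)\Chi^*(\xi-\xi_0)\big)$ (your $\tilde\chi$ is the paper's $\Chi^*$), together with self-adjointness, $L_{g\Chi^*}(g\Chi^*)=1$ and positivity of $L_{g\Chi^*}$. If you split the data into $f^+=(1-g)(1-\xi_0)(1-\Chi_0)\Chi\geq 0$ and $f^-=(1-g)(1-\xi)\Chi_0\geq 0$, the two quantities you discard in your estimates are exactly the paper's nonpositive terms $-\int_\Gamma f^+L_{g\Chi^*}(g\Chi_0)\,d\sigma$ and $-\int_\Gamma f^-L_{g\Chi^*}\big((1-\Chi_0)\Chi g\big)\,d\sigma$.

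The step you label the ``expected obstacle'' is therefore already settled by the sub-Markov and positivity argument you sketch there; the alternative of testing with $z_+$ is not needed. It is also cleaner to take simply $\xi_0:=\xi(\cdot,t_0)$ at a good time $t_0$, as the paper does, rather than describe $\xi_0$ through the implicit steady relation.
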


\begin{proof}
For a set $B\subset [0,\infty)$ of measure zero and all $t_0\in [0,\infty)\setminus B$ we have
\begin{equation*}
  -(1-g)\xi(\cdot,t_0) + gL_g\big((1-g)\xi\big)(\cdot,t_0) =0 \quad\textrm{ in }\{u(\cdot,t_0)=0\}\,.
\end{equation*}
We fix $t_0\in [0,\infty)\setminus B$ and assume without loss of generality $t_0=0$.
We define $\Chi(\cdot,t)=\Chi_{\{u(\cdot,t)>0\}}$, $\Chi_0=\Chi(\cdot,0)$, $\xi_0=\xi(\cdot,0)$, and $\Chi^*(\cdot,t)=\Chi_{\{u_0+u(\cdot,t)>0\}}$.
By \cite{LNRV21} it holds
\begin{equation*}
  L_g\big((1-g)\xi\big)=L_{g\Chi^*}\big((1-g)\Chi^*\xi\big)\,,\quad
  L_g\big((1-g)\xi_0\big)=L_{g\Chi^*}\big((1-g)\Chi^*\xi_0\big)\,,\quad L_{g\Chi^*}\big(g\Chi^*\big)=1\,.
\end{equation*}
Therefore, we deduce for almost all $t\in t_0\in (0,\infty)\setminus B$
\begin{align*}
  &\frac{d}{dt}\int_\Gamma (1-\Chi_0)u(\cdot,t)\,d\sigma\\
  &\quad= \int_\Gamma (1-\Chi_0)\Chi(\cdot,t)\Big(-(1-g) + gL_g\big((1-g)\xi\big)\Big)\,d\sigma\\
  &\quad= \int_\Gamma (1-\Chi_0)\Chi(\cdot,t)\Big(-(1-g)(1-\xi_0) + gL_{g\Chi^*(\cdot,t)}\big((1-g)\Chi^*(\cdot,t)(\xi(\cdot,t)-\xi_0)\big)\Big)\,d\sigma\\
  &\quad= -\int_\Gamma (1-\Chi_0)\Chi(\cdot,t)(1-\xi_0)(1-g)L_{g\Chi^*(\cdot,t)}\big(g\Chi^*(\cdot,t)\big)\,d\sigma\\
  &\quad\qquad + \int_\Gamma \big((1-g)\Chi^*(\cdot,t)\big)(\xi(\cdot,t)-\xi_0)L_{g\Chi^*(\cdot,t)}\big((1-\Chi_0)\Chi(\cdot,t) g\big)\,d\sigma\,,
\end{align*}
where we have used that $L_h$ is self-adjoint.
Since $\Chi^*(\cdot,t)=\Chi_0 + (1-\Chi_0)\Chi(\cdot,t)$ we obtain
\begin{align*}
  \frac{d}{dt}\int_\Gamma (1-\Chi_0)u(\cdot,t)\,d\sigma
  &= -\int_\Gamma (1-\Chi_0)(1-\xi_0)\Chi(\cdot,t)(1-g)L_{g\Chi^*(\cdot,t)}\big(g\Chi_0\big)\,d\sigma \\
  &\qquad -\int_\Gamma \big((1-g)\Chi_0(1-\xi(\cdot,t))\big)L_{g\Chi^*(\cdot,t)}\big((1-\Chi_0)\Chi(\cdot,t) g\big)\,d\sigma\leq 0\,.
\end{align*}
Since $(1-\Chi_0)u(\cdot,0)=0$ we deduce
\begin{equation*}
  \int_{\{u(\cdot,t_0)=0\}}u(\cdot,t) \,d\sigma=0 \qquad\textrm{ for almost all }t>t_0\,,
\end{equation*}
which proves the claimed monotonicity property.
\end{proof}

As for the reduced system we obtain a representation formula for the solution $u$.
\begin{corollary}[Representation formula]
For any solution $(u,\xi,w)$ of \eqref{eq:65a}-\eqref{eq:66b} we have for almost all $t>0$
\begin{equation*}
  u(\cdot,t) = \Big(u_0 + \int_0^t -(1-g)(\cdot,\tau)+w(\cdot,\tau)g\,d\tau\Big)_+\,.
\end{equation*}
In particular, $u:\Gamma\times[0,\infty)$ has a continuous representative.
\end{corollary}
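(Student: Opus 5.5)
We will follow the proof of the corresponding representation formula \eqref{eq:RepForm} for the reduced system, replacing the spatially constant $\alpha(\tau)$ by $w(\cdot,\tau)$. The new input is the monotonicity of the support from Proposition \ref{4prop:Mono-spt}. Set
\begin{equation*}
  F(\cdot,t):=u_0+\int_0^t\big(-(1-g)+w(\cdot,\tau)g\big)\,d\tau\,.
\end{equation*}
Since $w\in L^\infty(\Gamma_T)$ by \eqref{eq:BoundsW}, we have $F\in W^{1,\infty}(0,T;L^\infty(\Gamma))$. By \eqref{eq:LhCont}, applied to the bounded function $(1-g)\xi(\cdot,\tau)$, and the uniform bounds, $w(\cdot,\tau)$ is H\"older continuous in $x$ uniformly in $\tau$. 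Together with $u_0\in C^0(\Gamma)$ this makes $F$ continuous on $\Gamma\times[0,\infty)$, so $F_+$ is a continuous function. This gives the final assertion once the formula is established.

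\emph{Step 1: positivity set.} Fix $t$ in the full-measure set of times where Proposition \ref{4prop:Mono-spt} applies for all pairs of such times, and let $x\in\{u(\cdot,t)>0\}$. Monotonicity of the support gives $u(x,\tau)>0$ for almost every $\tau\in(0,t)$, up to a null set in $x$; here we use a Fubini argument over countably many rational times. On the set $\{u>0\}$ we have $\xi=1$, so by \eqref{eq:65a} $\partial_tu=-(1-g)+wg$ for a.e.\ $\tau<t$. Since $u\in H^1(\Gamma_T)$ is absolutely continuous in $t$ for a.e.\ $x$, integrating gives $u(x,t)=F(x,t)$. In particular $F(x,t)>0$ there, hence $u=F_+$ on $\{u(\cdot,t)>0\}$.

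\emph{Step 2: zero set.} On $\{u(\cdot,t)=0\}$ we must show $F(\cdot,t)\le 0$. Since $\xi\le1$, \eqref{eq:65a} gives $\partial_tu\ge -(1-g)+wg$ a.e.\ in $\Gamma_T$. Integrating in time along a.e.\ $x$ yields
\begin{equation*}
  u(x,t)\ge u_0(x)+\int_0^t\big(-(1-g)+wg\big)(x,\tau)\,d\tau=F(x,t)\,.
\end{equation*}
Hence $F\le u=0$ there, and therefore $u=F_+$ almost everywhere in $\Gamma$ for almost all $t$.

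The main obstacle is Step 1, specifically making the a.e.-in-time monotonicity of the support compatible with the pointwise-in-$x$ integration. We will handle this by first restricting to a countable dense set of good times. Then we use the continuity of $t\mapsto u(\cdot,t)$ in $L^2(\Gamma)$, which holds because $u\in H^1(\Gamma_T)$, to conclude that for a.e.\ $x$ in $\{u(\cdot,t)>0\}$ the function $\tau\mapsto u(x,\tau)$ vanishes nowhere on $[0,t]$. The remaining ingredient is the continuity of $w$ in $x$ via \eqref{eq:LhCont}, which is routine.
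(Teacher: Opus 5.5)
Your proposal is correct and is essentially the paper's argument. Monotonicity of the support (Proposition \ref{4prop:Mono-spt}) gives $u=F$ on $\{u(\cdot,t)>0\}$; the inequality $\xi\le 1$ gives $u\ge F$, hence $F\le 0$ on $\{u(\cdot,t)=0\}$; and the uniform $C^{0,\frac12}$-bound on $w$ from \eqref{eq:LhCont} makes $F_+$ continuous.

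One small caution concerns your closing claim that $\tau\mapsto u(x,\tau)$ vanishes nowhere on $[0,t]$. It does not follow from $L^2$-continuity together with a dense set of good times, since a limit of positive functions can vanish. At $\tau=0$ it can even fail, because $\{u_0>0\}$ need not contain $\{u(\cdot,t)>0\}$. You do not need this claim: Step 1 only uses positivity of $u(x,\tau)$ for a.e.\ $\tau\in(0,t)$, which you already obtain by Fubini.
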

\begin{proof}
This follows as above, using in addition that $w\in L^\infty\big(0,\infty,C^{0,1/2}(\Gamma)\big)$.
\end{proof}

\begin{corollary}[Monotonicity of $w$ and $\xi$]
For almost all $0\leq t_0<t$ we have
\begin{align*}
  w(\cdot,t) &\leq w(\cdot,t_0)\quad\textrm{ a.e.~in }\Gamma\,\\
  \xi(\cdot,t) &\leq \xi(\cdot,t_0)\,.
\end{align*}
\end{corollary}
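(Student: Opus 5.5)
The plan is to mimic the proof of Corollary \ref{Cor:Mon}, with the scalar quotient formula for $\alpha$ replaced by the operator representation of $w$ from Remark \ref{rem:Lh}, and to prove the monotonicity of $\xi$ first. Fix $t_0<t$ outside a null set, chosen such that Proposition \ref{4prop:Mono-spt} applies to the pair $(t_0,t)$. We also require that at both times $\xi=1$ a.e. in the positivity set. Finally, we require the identity $(1-g)\xi=gw$ a.e. in the zero set; at such times $\partial_t u=0$ a.e. in $\{u=0\}$ by $H^1$-regularity. Set $\Chi_0=\Chi_{\{u(\cdot,t_0)>0\}}$ and $\Chi=\Chi_{\{u(\cdot,t)>0\}}$. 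By Proposition \ref{4prop:Mono-spt} we have $\Chi\le\Chi_0$ a.e.

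The $\xi$-inequality is immediate on $\{u(\cdot,t_0)>0\}$, since there $\xi(\cdot,t_0)=1\ge\xi(\cdot,t)$. On the complement $\{u(\cdot,t_0)=0\}\subset\{u(\cdot,t)=0\}$, both times satisfy $(1-g)\xi=gw$, which gives
\begin{equation*}
  (1-g)\big(\xi(\cdot,t)-\xi(\cdot,t_0)\big)=g\big(w(\cdot,t)-w(\cdot,t_0)\big).
\end{equation*}
Hence $\xi$-monotonicity reduces to $w(\cdot,t)\le w(\cdot,t_0)$ on $\Gamma$. The whole task is therefore the $w$-monotonicity.

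For $w$, I would use \eqref{Lformula} in the form $w(\cdot,s)=L_{g\Chi(\cdot,s)}\big((1-g)\Chi(\cdot,s)\big)$. By the uniqueness argument in \cite{LNRV21}, this formula also holds for the limit system \eqref{eq:65a}--\eqref{eq:66b}, since it only used the structure $(1-g)\xi=gw$ on the zero set. The monotonicity properties \eqref{Lmonotoneh} and \eqref{Lmonotones} pull in opposite directions: shrinking the support decreases the source $(1-g)\Chi$, but it also decreases the absorption $g\Chi$, which increases $L$. So a direct comparison is not possible. Instead I would set $z:=w(\cdot,t_0)-w(\cdot,t)$ and derive a boundary value problem for it. Writing $h_0=g\Chi_0$ and $h=g\Chi$, we have $\Delta z=0$ in $\Omega$ and
\begin{equation*}
  \partial_n z + h_0 z = (1-g)(\Chi_0-\Chi) - (h_0-h)\,w(\cdot,t) = (\Chi_0-\Chi)\big((1-g)-g\,w(\cdot,t)\big).
\end{equation*}
On the set $\{\Chi_0=1,\Chi=0\}$ we have $u(\cdot,t)=0$, so the identity at time $t$ gives $g w(\cdot,t)=(1-g)\xi(\cdot,t)\le 1-g$. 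The right-hand side is therefore nonnegative, and $z=L_{h_0}(\text{nonnegative})\ge0$ by \eqref{Lmonotones} together with $L_{h_0}(0)=0$.

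The main obstacle is justifying this operator representation with characteristic functions for the limit system. If citing \cite[Prop.~2.7]{LNRV21} is not directly admissible, I would avoid it: I would use \eqref{eq:66} at both times with the operator $L_g$, together with the zero-set identities. One writes $z$ as the solution of $\partial_n z+gz=(1-g)(\xi(\cdot,t_0)-\xi(\cdot,t))$. In the joint zero set the relation $(1-g)\Delta\xi=g z$ closes the system, giving $\partial_n z+ g\Chi_0 z=(1-g)(1-\xi(\cdot,t))\Chi_0\ge0$. Positivity then follows by the maximum principle, since $L_{g\Chi_0}$ is positivity preserving. The remaining technical points are selecting the full-measure set of good times and checking that $|\{\Chi_0=1\}|>0$, which holds because $\int u=1$, so that $L_{g\Chi_0}$ is well defined.
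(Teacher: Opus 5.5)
Your proof is correct and is essentially the paper's argument. The paper writes $w(\cdot,t)=L_{g\Chi(\cdot,t_0)}\big(\Chi(\cdot,t_0)(1-g)\xi(\cdot,t)\big)$, which is valid because $(1-g)\xi(\cdot,t)=g\,w(\cdot,t)$ on $\{u(\cdot,t_0)=0\}\subset\{u(\cdot,t)=0\}$, and compares this with $w(\cdot,t_0)=L_{g\Chi(\cdot,t_0)}\big(\Chi(\cdot,t_0)(1-g)\big)$ using $\xi\le 1$ and \eqref{Lmonotones}; this is exactly your identity $z=L_{g\Chi_0}\big(\Chi_0(1-g)(1-\xi(\cdot,t))\big)\ge 0$, and the ``opposite directions'' issue you raise disappears once both times are written with the operator of the larger set $\{u(\cdot,t_0)>0\}$, while the $\xi$-monotonicity then follows as in Corollary \ref{Cor:Mon}, just as you do.
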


\begin{proof}
For $t_0<t \in [0,\infty)\setminus B$ as in the proof of Proposition \ref{4prop:Mono-spt} we deduce by $\{u(\cdot,t)>0\}\subset \{u(\cdot,t_0)>0\}$, by $\xi\leq 1$, and by the monotonicity of $L_h$
\begin{equation*}
  w(\cdot,t)= L_{g\Chi(\cdot,t_0)}(\Chi(\cdot,t_0)(1-g)\xi(\cdot,t))
  \leq L_{g\Chi(\cdot,t_0)}(\Chi(\cdot,t_0)(1-g)) = w(\cdot,t_0)\,.
\end{equation*}
The monotonicity of $\xi$ follows as in the proof of Corollary \ref{Cor:Mon}.
\end{proof}

\begin{theorem}[Concentration of support]
It holds 
\[
 w(\cdot,t)\searrow \alpha_* \quad\textrm{ in }C^{0,\frac{1}{2}}(\Gamma)
\]
and there exists a Borel set $S\subset\Gamma$ such that
\begin{equation}
  S=\mathsmaller\bigcap_{t>0} S(t) \qquad \mbox{ with } S(t):=\{u(\cdot,t)>0\}\,.
  \label{eq:LimSWInfty}
\end{equation}
Then $S\subset \{g=\gmax\}$ holds and for any sequence $t_k\to\infty$ there exists a subsequence (not relabeled) and a Radon measure $\mu$ on $\Gamma$ with support in $S$ such that
\begin{equation}
  \mu = \lim_{k\to\infty} u(\cdot,t_k)\calH^2 \quad\textrm{ in the sense of Radon measures on }\Gamma\,.
  \label{eq:CompactUT2}
\end{equation}
\end{theorem}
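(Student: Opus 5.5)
The plan is to transfer the proof of Theorem~\ref{thm:Conc1} to the bulk--surface limit system \eqref{eq:65a}--\eqref{eq:66b}. The only real change is that $\alpha$ is replaced by the spatially varying $w(\cdot,t)$, which we control through the operators $L_h$.

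\emph{Step 1: monotone limits and compactness.} By the preceding corollary, $w(\cdot,t)$ and $\xi(\cdot,t)$ are monotonically decreasing in $t$. Hence $\xi(\cdot,t)\to\xi_\infty$ in every $L^p(\Gamma)$, $p<\infty$. Using $w(\cdot,t)=L_g((1-g)\xi(\cdot,t))$ together with the continuity \eqref{eq:LhCont} of $L_g:L^4\to W^{1,4}\embeds C^{0,\frac12}$, we get $w(\cdot,t)\to w_\infty:=L_g((1-g)\xi_\infty)$ in $C^{0,\frac12}$; for the norm in the statement itself we use $W^{1,4}\embeds C^{0,\beta}$ compactly for $\beta<\frac12$ plus the uniform $C^{0,\frac12}$ bound. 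The lower bound $w\geq\alpha_*$ from \eqref{eq:BoundsW} passes to the limit.

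The set $S=\bigcap_{\ell\in\N}\{u(\cdot,\ell)>0\}$ is Borel, since by the representation formula and the continuity of $u$ each $\{u(\cdot,\ell)>0\}$ is open; monotonicity of support (Proposition~\ref{4prop:Mono-spt}) gives equality with the intersection over all $t>0$. Weak compactness of $u(\cdot,t_k)\calH^2$ follows from mass conservation. Since the supports decrease, the limit measure $\mu$ is supported in $\bar S$. Getting support in $S$ itself would need the extra remark that $\bar S\subset\{g=\gmax\}$ by continuity of $g$; I would only claim $\spt\mu\subset\bar S\subset\{g=\gmax\}$.

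\emph{Step 2: identify $w_\infty$ and locate $S$.} Outside $S$, exactly as in \eqref{eq:1001}, we have $(1-g)\xi_\infty=g\,w_\infty$ a.e. On $S$ we argue pointwise with the continuous $w_\infty$, repeating Steps 3--4 of Theorem~\ref{thm:Conc1}:
\begin{itemize}
\item if $w_\infty(x^*)<\frac{1-g}{g}(x^*)$ for some $x^*\in S$, then $\partial_t u\leq -c$ near $x^*$ for large $t$, contradicting $x^*\in S$, so $w_\infty\geq\frac{1-g}{g}$ on $S$;
\item if $w_\infty>\frac{1-g}{g}$ on a subset of $S$ of positive measure, then $u$ grows linearly there, contradicting mass conservation.
\end{itemize}
Therefore $\xi_\infty=1$ on $S$, and $(1-g)\xi_\infty=g\,w_\infty$ holds a.e.\ on all of $\Gamma$, with $\xi_\infty\leq 1$. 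This means $w_\infty=L_g((1-g)\xi_\infty)$ solves $\Delta w_\infty=0$ in $\Omega$ with $\partial_n w_\infty+g w_\infty=(1-g)\xi_\infty=g w_\infty$. Hence $\partial_n w_\infty=0$, so $w_\infty$ is a constant $c\geq\alpha_*$.

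Then $\xi_\infty=c\,g/(1-g)\leq 1$ a.e. Since $g$ is continuous, evaluating near $\argmax g$ gives $c\leq\alpha_*$, so $w_\infty\equiv\alpha_*$. Finally, $S\subset\{g=\gmax\}$ follows as in Step 6 of Theorem~\ref{thm:Conc1}: at a point $x_*\in S$ with $g(x_*)<\gmax$, uniform convergence $w\to\alpha_*$ gives $\partial_t u\leq -\eps(1+\alpha_*)+\gmax\eps_1<0$ near $x_*$ for large $t$, a contradiction.

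\emph{Main obstacle.} I expect the hardest step to be the identification $w_\infty\equiv\alpha_*$. For $D=\infty$ this was immediate because $\alpha$ is a scalar; here it requires the elliptic argument above. That argument in turn needs the a.e.\ relation $(1-g)\xi_\infty=g w_\infty$ both off and on $S$. On $S$, the second bullet only gives the relation a.e., and sets of measure zero must be handled with the continuity of $w_\infty$. I would first try to obtain equality on $S$ a.e.\ via the mass argument; if $\calH^2(S)=0$ the relation on $S$ is vacuous anyway.
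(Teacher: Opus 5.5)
Your proposal is correct. Your Steps 1--2 follow the paper's proof: monotone limits, uniform convergence of $w(\cdot,t)$ via \eqref{eq:LhCont}, the relation \eqref{eq:D1001} off $S$, and the two arguments on $S$ giving \eqref{eq:WInfty} and \eqref{eq:aWInftyIdent}. Where you genuinely depart from the paper is the identification $w_\infty\equiv\alpha_*$.

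The paper argues as follows. It derives $w_\infty\le\frac{1-g}{g}$, "hence" $w_\infty\le\alpha_*$ and $w_\infty=\alpha_*$ on $S$. It then writes $w_\infty=L_{g\Chi_S}((1-g)\Chi_S)=\alpha_*L_{g\Chi_S}(g\Chi_S)=\alpha_*$, reusing the structure formula \eqref{Lformula} and $L_h(h)=1$.

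You argue differently. You note that $\xi_\infty=1$ a.e.\ on $S$, together with \eqref{eq:aWInftyIdent} and \eqref{eq:D1001}, gives $(1-g)\xi_\infty=g\,w_\infty$ a.e.\ on all of $\Gamma$. The weak form of \eqref{eq:DefLh} then reduces to $\int_\Omega\nabla w_\infty\cdot\nabla\phi=0$, so $w_\infty\equiv c$. Finally, $\xi_\infty=cg/(1-g)\le 1$ near $\argmax g$, combined with \eqref{eq:BoundsW}, pins down $c=\alpha_*$.

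The paper's route is shorter because it reuses the $L_h$ calculus. Yours is more robust, for three reasons:
\begin{itemize}
\item The paper's final formula needs $L_{g\Chi_S}$ to be defined, i.e.\ $\calH^2(S)>0$. That is exactly the case excluded in Theorem~\ref{thm:ConcD2}.
\item It needs $(1-g)\Chi_S=\alpha_*g\Chi_S$, i.e.\ $g=\gmax$ on $S$. That is only established afterwards, in Step 6 of Theorem~\ref{thm:Conc1}.
\item The passage from $w_\infty\le\frac{1-g}{g}$ to $w_\infty\le\alpha_*$ is only justified at points of $\argmax g$ once $w_\infty$ is a function rather than a scalar.
\end{itemize}
Your argument needs none of this. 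In fact $S\subset\{g=\gmax\}$ then follows at once from your first bullet, since $\alpha_*=w_\infty\ge\frac{1-g}{g}$ pointwise on $S$.

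Your stated worry about null sets is unfounded. The weak Neumann formulation only uses the identity a.e.\ on $\Gamma$.

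Two minor remarks:
\begin{itemize}
\item The hedge about $C^{0,\beta}$ with $\beta<\frac12$ is superfluous. Continuity of $L_g:L^4(\Gamma)\to C^{0,\frac12}(\Gamma)$ already gives convergence in $C^{0,\frac12}$.
\item Your restriction to $\spt\mu\subset\bar S$ is exactly what the paper's Step 1 delivers as well, since it refers back to Theorem~\ref{thm:Conc1}.
\end{itemize}
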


\begin{proof}
We follow the arguments and steps of the proof of Theorem \ref{thm:Conc1}.
\begin{steps}
\item Using in addition \eqref{eq:BoundsW} we deduce as before the existence and required properties of $S$, the compactness property \eqref{eq:CompactUT2} and the existence of limits $w_\infty,\xi_\infty$ of $w(\cdot,t_k),\xi(\cdot,t_k)$ in every $L^p(\Gamma)$, $1\leq p<\infty$.
By \eqref{eq:LhCont} we even have
\begin{equation}
  w(\cdot,t_k)\to w_\infty= L_g((1-g)\xi_\infty)\quad\textrm{ uniformly.}
  \label{eq:ConvWUni}
\end{equation} 
\item As before we prove that
\begin{equation}
  (1-g)\xi_\infty=g w_\infty \quad\textrm{ a.e.~in }\mathsmaller\bigcup\limits_{t>0}\{u(\cdot,t)=0\}=\Gamma\setminus S\,.
\label{eq:D1001}
\end{equation}
\item Using \eqref{eq:ConvWUni} we obtain as in the case $D=\infty$ that
\begin{equation}
  w_\infty \geq \frac{1-g}{g} \geq \alpha_*\quad\textrm{ in }S\,.
  \label{eq:WInfty}
\end{equation}
\item Similarly, we also deduce that 
\begin{equation}
  w_\infty = \frac{1-g}{g} \quad\calH^2-\textrm{a.e.~in }S\,.
\label{eq:aWInftyIdent}
\end{equation}
\item By \eqref{eq:D1001} and \eqref{eq:aWInftyIdent} we obtain $w_\infty \leq \frac{1-g}{g}$ in $\Gamma$, hence $w_\infty\leq\alpha_*$.
With \eqref{eq:WInfty} we deduce $w_\infty=\alpha_*$ in $S$.

Using Remark \ref{rem:Lh} we finally have
\begin{equation*}
  w_\infty=L_{g\Chi_S}\big((1-g)\Chi_S\big)=\alpha_*L_{g\Chi_S}\big(g\Chi_S\big)=\alpha_*\,.
\end{equation*}
\end{steps}
\end{proof}

\begin{theorem}[Distribution of limit measures]\label{thm:ConcD2}
Consider $S$, $w_\infty$ as in \eqref{eq:LimSWInfty} and assume $\calH^2(S)=0$.
Let a sequence $t_k\to\infty$ and a Radon measure $\mu$ on $\Gamma$ with support in $S$ be given such that
\begin{equation}
  \mu = \lim_{k\to\infty} u(\cdot,t_k)\calH^2 \quad\textrm{ in the sense of Radon measures on }\Gamma\,.
\end{equation}
Then there exists $\omega(k)\to 0$ for $k\to\infty$ such that for any measurable set $A\subset \Gamma$ with $\mu(A)>0$
\begin{equation*}
  \mu(A) = \lim_{k\to\infty}
  \frac{
    \int_A
      \big(g-(\gmax-\omega(k))\big)_+\,d\sigma
  }{
    \int_\Gamma
      \big(g-(\gmax-\omega(k))\big)_+\,d\sigma
  }\,.
\end{equation*}
\end{theorem}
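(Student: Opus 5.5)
The plan is to repeat the proof of Theorem \ref{thm:Conc2}, with the spatially constant multiplier $\alpha$ replaced by the function $w$. First set
\begin{equation*}
  \varphi(x,t) := \big(w(x,t)-\alpha_*\big)\gmax\,,\qquad h := \frac{\gmax-g}{\gmax}\,.
\end{equation*}
Then $g=(1-h)\gmax$, and \eqref{eq:65a} becomes
\begin{equation*}
  \partial_t u = -(h-\varphi+h\varphi)\Chi_{\{u>0\}}\,.
\end{equation*}
The representation formula for \eqref{eq:65a}-\eqref{eq:66b} gives
\begin{equation*}
  u(\cdot,t)=\Big(u_0+\int_0^t\big((1-h)\varphi(\cdot,\tau)-h\big)\,d\tau\Big)_+\,.
\end{equation*}
We have $\varphi\geq 0$ by \eqref{eq:BoundsW}. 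By the monotonicity of $w$ and the concentration theorem, $\varphi(\cdot,t)\searrow 0$ uniformly on $\Gamma$, indeed in $C^{0,\frac12}(\Gamma)$.

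Next, write $\mu(A)$ as a quotient exactly as in \eqref{4eq:MuA}, using mass conservation $\int_\Gamma u(\cdot,t_k)\,d\sigma=1$. Since $\calH^2(S)=0$, the supports $\{u(\cdot,t_k)>0\}$ decrease to a null set, so $\int_{\{u(\cdot,t_k)>0\}}u_0\,d\sigma\to0$. Define $\tilde S_k$ as the set where $\int_0^{t_k}((1-h)\varphi-h)\,d\tau>0$. The same argument as before shows that the contribution of $\{u(\cdot,t_k)>0\}\setminus\tilde S_k$ is bounded by $\int_{\{u(\cdot,t_k)>0\}}u_0\,d\sigma\to0$, and hence negligible. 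After dividing numerator and denominator by $t_k$, this yields
\begin{equation*}
  \mu(A)=\lim_{k\to\infty}\frac{\int_A\big((1-h)\Phi_k-h\big)_+\,d\sigma}{\int_\Gamma\big((1-h)\Phi_k-h\big)_+\,d\sigma}\,,\qquad \Phi_k(x):=\fint_0^{t_k}\varphi(x,\tau)\,d\tau\,.
\end{equation*}

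The new difficulty, and the main obstacle, is that $\Phi_k$ now depends on $x$, while the statement asks for a spatially constant threshold $\omega(k)$. Two facts help here. First, $\Phi_k\to0$ uniformly, because $\varphi\to0$ uniformly as a Cesàro average. Second, $\Phi_k$ is uniformly Hölder, with $[\Phi_k]_{C^{0,1/2}}\leq C$ from \eqref{eq:LhCont} and the bound \eqref{eq:nablawkbound}.

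The integrands vanish outside $\{h<\|\Phi_k\|_\infty\}$, and this set shrinks towards $\{g=\gmax\}$. I would try to show that on this shrinking set $\Phi_k$ is asymptotically constant relative to its own size. That is, with $\omega(k):=\gmax\max_{\{h\leq \|\Phi_k\|_\infty\}}\Phi_k$, one wants
\begin{equation*}
  \operatorname{osc}_{\{h<\|\Phi_k\|_\infty\}}\Phi_k=o(\omega(k))\,.
\end{equation*}
The natural way to get this is from the equation for $w_\infty$: the limit $w(\cdot,t)=L_{g\Chi(\cdot,t)}((1-g)\Chi(\cdot,t)\xi)$ has, near $S$, variation controlled by the measure of the support, which tends to $0$. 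Concretely, one writes
\begin{equation*}
  w-\alpha_*=L_{g\Chi}\big(g\Chi(\tfrac{1-g}{g}-\alpha_*)\big)\,,
\end{equation*}
and notes that the source term is small in $L^4$ relative to its mean. If this relative-oscillation bound holds, then $(1-h)\Phi_k-h$ and $\omega(k)/\gmax-h$ differ by $o(\omega(k))$ on the relevant set. The claimed formula with $(g-(\gmax-\omega(k)))_+=\gmax(\omega(k)/\gmax-h)_+$ then follows, since both numerator and denominator are of order $\omega(k)\,|\{h<\omega(k)\}|$, so relative errors $o(1)$ disappear in the quotient.

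If the relative-oscillation estimate turns out to be too delicate, the fallback is to keep the argument of the paper at the level of detail of Theorem \ref{thm:Conc2}. There one replaces $\Phi_k$ by its value at a point of $S$ and uses uniform convergence $\Phi_k\to0$ together with the Hölder bound to absorb the difference into $\omega(k)$.
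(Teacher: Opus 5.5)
Your reduction is the paper's: the same $\varphi$ and $h$, the representation formula, the quotient \eqref{4eq:MuA}, and the set $\tilde S_k$. The paper then handles the $x$-dependence of $\Phi_k$ with a single remark, that $\varphi(\cdot,t)\to0$ uniformly, and this is exactly your fallback. You have correctly found the point where that remark is not enough, but your proposal does not close the gap. Your main route is conditional (``if this relative-oscillation bound holds''), and the fallback cannot supply the bound. A single threshold requires $\gmax\Phi_k=\omega(k)\,(1+o(1))$ on $\{h<\|\Phi_k\|_\infty\}$. Uniform convergence only gives $|\Phi_k(x)-\Phi_k(x_0)|\le 2\|\Phi_k\|_\infty$, which is of the same order as $\omega(k)$, not smaller. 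A $k$-independent $C^{0,\frac12}$ bound gives $C|x-x_0|^{1/2}$, which fails in two ways. At a nondegenerate maximum, where $\{h<\omega\}$ has diameter of order $\omega^{1/2}$, it is of order $\omega^{1/4}\gg\omega$. Between two distinct maximizers it does not tend to zero at all. So nothing can be ``absorbed into $\omega(k)$''.

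One way to make your main route rigorous is as follows. With $\Chi=\Chi_{\{u(\cdot,t)>0\}}$ one has $z:=w-\alpha_*=L_{g\Chi}(h\Chi)$. Integrating the boundary condition gives $\int_\Gamma g\Chi z\,d\sigma=\int_\Gamma h\Chi\,d\sigma$. Hence $c(t):=\int_{S(t)}h\,d\sigma\big/\int_{S(t)}g\,d\sigma$ is a weighted mean of $z$ on $S(t)$. Moreover, $z$ minus its mean is the Neumann-to-Dirichlet image of the mean-zero datum $(h-gz)\Chi$. The continuity $L^4(\Gamma)\to W^{1,4}(\Gamma)\embeds C^{0,\frac12}(\Gamma)$ from Remark \ref{rem:Lh}, plus an absorption, then gives $\operatorname{osc}_\Gamma z\le C|S(t)|^{1/4}\big(\sup_{S(t)}h+c(t)\big)$ once $|S(t)|$ is small. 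The comparison must be with $c(t)$, the size of $w-\alpha_*$, and not with the mean of the source over $\Gamma$: relative to $\fint_\Gamma h\Chi\,d\sigma$, H\"older shows $\|h\Chi\|_{L^4(\Gamma)}$ is at least $|\Gamma|\,|S(t)|^{-3/4}$ times larger.

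This estimate still leaves three steps to supply. First, you need $|S(t)|^{1/4}\sup_{S(t)}h=o(c(t))$, that is, some control of $\sup_{S(t)}h$ by $c(t)$. Second, you need a Ces\`aro argument to pass from $\varphi(\cdot,\tau)$ to the averages $\Phi_k$; note that mass conservation gives $\|\int_0^t\varphi\,d\tau\|_\infty\ge(1-o(1))/|S(t)|\to\infty$. Third, even with $\operatorname{osc}\Phi_k=o(\omega(k))$, your last step (numerator and denominator both of order $\omega(k)\,|\{h<\omega(k)\}|$) is a doubling-type assumption on $s\mapsto|\{h<s\}|$. It holds for the asymptotically homogeneous $h$ of Section \ref{S.examples} but not for every continuous $g$. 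It is needed only because, unlike for $D=\infty$, the threshold is no longer exactly constant.

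Finally, the identity $\mu(A)=\lim_k\int_A u(\cdot,t_k)\,d\sigma$, used in \eqref{4eq:MuA-2} and in your quotient, requires $\mu(\partial A)=0$. For $A=S$ the right-hand side of the claim vanishes because $\calH^2(S)=0$, while $\mu(S)=1$. So the statement, and likewise Theorem \ref{thm:Conc2}, must be restricted to $\mu$-continuity sets.
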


\begin{proof}

We set
\begin{equation*}
  \varphi := (w-w_\infty)\,\gmax\,,\qquad
  h := \frac{\gmax-g}{\gmax}\,.
\end{equation*}
Then $g = (1-h)\,\gmax$ and $w = \frac{\varphi}{\gmax} + w_\infty$.

Consequently, from \eqref{eq:65a}-\eqref{eq:66b} we obtain
\begin{align*}
  \partial_t u &= -(h-\varphi+h\varphi)\,\chi_{\{u>0\}} \quad\textrm{ in }\Gamma\times (0,\infty)\,,\\
  \varphi &= \gmax\Big(L_{g\chi_{\{u>0\}}}\big((1-g)\chi_{\{u>0\}}\big)-w_\infty\Big)\,\\
  u &\geq 0\,,\quad 0 \le \xi \le 1\,,\quad u\,\xi = u\,.
\end{align*}

We now can follow the proof of Theorem \ref{thm:Conc2}.
The main difference is the additional space dependence of $\varphi$ but by the uniform convergence of $\varphi(\cdot,t)\to 0$ with $t\to\infty$ the arguments all remain valid.
\end{proof}

\section{Examples for the limit distribution of the total mass}\label{S.examples}

With $h:=\gmax-g$ the characterization of any limit measure $\mu$ with \eqref{eq:LimMu} in Theorem \ref{thm:Conc2} can be written as
\begin{equation*}
  \mu(A) = \lim_{k\to\infty}
  \frac{
    \int_A
      \big(\omega(k)-h\big)_+\dd S
  }{
    \int_\Gamma
      \big(\omega(k)-h)\big)_+\dd S
  }\quad\textrm{ for $A\subset\Gamma$ measurable}\,.
\end{equation*}
In this section we compute this limit measure for some examples. For that we assume in the following that $h:\Gamma\to [0,\infty)$ is continuous with 
\begin{equation*}
  \{h=0\} = \{x_1,\dots,x_M\}\subset\Gamma
\end{equation*}
for some $M\in\N$.
We choose $r_0>0$ such that the (intrinsic) balls $B(x_j,r_0)\subset\Gamma$, $j=1,\dots,M$, are pairwise disjoint.

We then define $G,G_j,f_j:(0,1]\to [0,\infty)$, $j=1,\dots,M$, by
\begin{align*}
  G(s) &:= \int_\Gamma (s-h(x))_+\,d\sigma(x)\,,\quad
  G_j(s) := \int_{B(x_j,r_0)} (s-h(x))_+\,d\sigma(x)\,,\quad
  f_j(s) := \frac{G_j(s)}{G(s)} \in [0,1]\,.
\end{align*}
Then  $f_j(s)$ is the fraction of the total mass of $(s-h)_+$ that lies
inside the unit ball around the zero $x_j$.

The asymptotic behavior of $f_1,\dots,f_M$ for $s\downarrow 0$ is related to the limit measures $\mu$ that were considered in Theorem \ref{thm:Conc2} and reflects the fraction of active protein concentrating in the points $x_1, \dots, x_M$. 

Since $\Gamma$ is $C^2$-regular and since the support of $(s-h)_+$ in $B(x_j,r_0)$ shrinks to $x_j$ with $s\downarrow 0$ it will be sufficient to restrict to the case of continuous functions  
 $h:\R^2\to [0,\infty)$ that satisfy $h \geq c_0>0$ on $\R^2\backslash \cup_j B(x_j,r_0)$.

\paragraph{Asymptotically homogeneous functions.}
We first consider the  case that 
\[
 \frac{h(x)}{u_j (x-x_j)} \to 1 \quad\mbox{ as } x \to x_j
\]
for some positively homogeneous function $u_j$ of degree $m_j>0$, that is 
$u_j(\lambda x) = \lambda^{m_j} u_j(x)$ for all $\lambda >0$ and $x \in \R^2$, $u_j(0)=0$ and $u_j(x)>0$ for all $x \not=0$.

Then we compute for sufficiently small $s$
\begin{align}\label{Gjasymp}
 G_j(s) &= \int_{B(x_j,r_0)} \big(s- u_j (x-x_j) + o(|x-x_j|^{m_j}) \big)_+ \,dx \nonumber \\
 &= s \int_{B(x_j,r_0)} \Big ( 1- \frac{u_j(x-x_j)}{s} + \frac{o(|x-x_j|^{m_j})}{s}\Big)_+\,dx \nonumber\\
 & = s \int_{B(x_j,r_0)} \Big( 1 - u_j \Big( \frac{x-x_j}{s^{\frac{1}{m_j}} } \Big) + \frac{o(|x-x_j|^{m_j})}{s}\Big)_+\,dx \nonumber\\
 &= s^{1+\frac{2}{m_j}} \int_{\{u_j<1\}} \big( 1-u_j(y)\big)\,dy + o\big(s^{1+\frac{2}{m_j}}\big) \nonumber\\
 &=: C_j s^{1+\frac{2}{m_j}} + o\big(s^{1+\frac{2}{m_j}}\big)\, 
\end{align}
with 
\begin{equation}\label{cjdef}
 C_j=\int_{\{u_j<1\}} \big( 1-u_j(y)\big)\,dy
\end{equation}

Let
\[
m_* := \max_{1\le j\le M} m_j,\qquad
J_* := \{j : m_j = m_*\}.
\]
Then, by \eqref{Gjasymp} we find 
\[
G_j(s) =
\begin{cases}
C_j\, s^{1+2/m_*} + o\bigl(s^{1+2/m_*}\bigr), & j\in J_*,\\[3pt]
o\bigl(s^{1+2/m_*}\bigr), & j\notin J_*,
\end{cases}
\]
as $s\downarrow0$. Hence
\[
G(s) = \sum_{j=1}^M G_j(s)
     = \Bigl(\sum_{j\in J_*} C_j\Bigr)s^{1+2/m_*}
       + o\bigl(s^{1+2/m_*}\bigr).
\]
As as consequence it follows as  $s\downarrow0$ that 
\[ \mbox{ if } m_j<m_*\,, \; \mbox{shen } f_j(s)\to0\,, \qquad \mbox{ whereas if } 
  m_j=m_*\,, \; \mbox{ then } 
          f_j(s) \to \frac{C_j}{\sum_{k\in J_*} C_k} =:w_j.
        \]
Thus the limiting weights $(w_1,\dots,w_M)$ form a probability vector
supported on the set $J_*$ of zeros at which $h$ is flattest.

A particularly simple case in this setting is $u_j(x) =\beta_j|x|^{m_j}$ for some $\beta_j>0$, where $C_j$ in \eqref{cjdef} is given by 
\begin{equation}\label{cjsimple}
 C_j= 2 \pi \int_0^1 \big( 1- \beta_j r^{m_j}\big)r\,dr = \pi \beta_j^{-\frac{2}{m_j}}  \frac{m_j}{2+m_j}\,.
\end{equation}

\paragraph{Nondegenerate minima.}
An important special case is when $h$ is $C^2$ near each minimum and
each zero is a non-degenerate quadratic minimum. More precisely, if we assume 
that for each $j$  the Hessian $H_j := D^2 h(x_j)$ is positive definite, then  $h$ is asymptotically homogeneous near each $x_j$ with $m_j=2$ and $u_j(x) = \frac 1 2 |\sqrt{H_j}x|^2$. Then, with the change of variables $y = \sqrt{H_j} x$ we obtain  
\begin{align*}
C_j = \int_{\{u_j <1\}} \big( 1- \tfrac 1 2 |\sqrt{H_j}x|^2\big)\,dx = \frac{1}{\sqrt{ \det H_j}} \int_{|y|^2<2} \big(1- \tfrac 1 2 |y|^2\big)\,dy  =\frac{\pi}{(\det H_j)^{1/2}} 
\end{align*}
and it follows that 
\[
\lim_{s\to 0} f_j(s)
  = \frac{(\det H_j)^{-1/2}}{\sum_{k=1}^M (\det H_k)^{-1/2}}.
\]
Hence, when all minima are non-degenerate quadratic, {all} of them
contribute in the limit and the flatter minima attract
a larger fraction of the mass of $(s-h)_+$ as $s\to 0$.

\paragraph{An oscillatory example.}
 
 With the insight gained for the asymptotically homogenous functions $h$ we can now also construct an example for a function $h$ with a single minimum at, say $x=0$, such that $G(s)$ oscillates as $t \to 0$. More precisely, we construct $h$ such that there exists two sequences $\{s_k\}_k, \{\tilde s_k\}_k$ with $s_k \to 0$ and $\tilde s_k \to 0$ and $G(s_k) \sim \big(s_k\big)^a$ and 
 $G(\tilde s_k) \sim \big(\tilde s_k\big)^b$ as $k \to \infty$ with $a>b$.
 
 For the construction we first state the following Lemma.
 \begin{lemma}\label{L.oscill}
  Let $a>b>0$ and $\theta>1$ be given. Then there exists an increasing function $\psi \colon [0,\infty) \to [0,\infty)$ with $\psi(0)=0$ and decreasing sequences 
  $\{r_k\}_k, \{ \rho_k\}_k, \{\tilde r_k\}_k, \{ \tilde \rho_k\}_k$ such that 
  \[
   \rho_k \leq (r_k)^{\theta}\,, \quad \tilde \rho_k \leq \big(\tilde r_k\big)^{\theta} 
\quad \mbox{ and } \quad \tilde r_k < \rho_k\,,\, r_{k+1} < \tilde \rho_k
\]
such that 
\[
 \psi(r) = r^a  \quad \mbox{ in } [\rho_k,r_k]\quad \mbox{ and } \quad 
 \psi(r)=r^b  \quad \mbox{ in } [\tilde \rho_k, \tilde r_k]\,.
\]
 \end{lemma}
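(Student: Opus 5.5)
The construction is inductive: we build $\psi$ piecewise, going from large $r$ towards $0$, alternating between pieces of $r^a$ and $r^b$ and joining them by monotone connecting pieces. Since $a>b$, we have $r^a<r^b$ for $r\in(0,1)$. Going down in $r$, a switch from the curve $r^a$ to the lower curve $r^b$ can therefore be made continuously and monotonically only if we "wait": on a gap interval, $\psi$ is kept constant (or nearly so) until the lower curve catches up. A switch from $r^b$ to $r^a$ (going down) is easy, because $r^a$ lies below $r^b$. So all we need is enough room between the intervals, and the parameter $\theta>1$ only forces the intervals $[\rho_k,r_k]$ to be long in logarithmic scale, which is harmless.

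Concretely, we start with $r_1<1$ and set $\rho_1:=r_1^{\theta}$, with $\psi(r)=r^a$ on $[\rho_1,r_1]$. Next we choose $\tilde r_1<\rho_1$ with $\tilde r_1^{\,b}<\rho_1^{\,a}$, i.e.\ $\tilde r_1<\rho_1^{a/b}$; this is possible since $\rho_1^{a/b}<\rho_1$. On $[\tilde r_1,\rho_1]$ we take $\psi$ to be the linear interpolation between $\tilde r_1^{\,b}$ and $\rho_1^{a}$, which is increasing. We then set $\tilde\rho_1:=\tilde r_1^{\,\theta}$ and $\psi=r^b$ on $[\tilde\rho_1,\tilde r_1]$. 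Now we choose $r_2<\tilde\rho_1$ with $r_2^{a}<\tilde\rho_1^{\,b}$; this holds automatically for any $r_2<\tilde\rho_1$, since $r^a<r^b$ and $r^b$ is increasing. On $[r_2,\tilde\rho_1]$ we again interpolate linearly, and then we iterate.

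By construction the sequences are decreasing, $\rho_k=r_k^{\theta}$ and $\tilde\rho_k=\tilde r_k^{\,\theta}$, and the interlacing $\tilde r_k<\rho_k$, $r_{k+1}<\tilde\rho_k$ holds. We additionally choose $r_k\to0$, e.g.\ $r_{k+1}\le \tilde\rho_k/2$. Then $\psi$ is defined on $(0,r_1]$, it is continuous and increasing because each junction value matches and each interpolation goes from a smaller to a larger value, and it tends to $0$ as $r\to0$; we set $\psi(0)=0$. For $r>r_1$ we extend by $\psi(r)=r^a$.

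The only point requiring care, and it is a mild one, is the monotonicity across the "$a\to b$" gaps, which is exactly what the choice $\tilde r_k<\rho_k^{a/b}$ guarantees. If a smooth $\psi$ is preferred, the linear pieces can be replaced by smooth increasing monotone transitions; this does not affect the argument.
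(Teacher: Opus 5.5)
Your proposal is correct and uses essentially the same construction as the paper. You set $\psi=r^a$ on $[\rho_k,r_k]$ with $\rho_k\le r_k^\theta$, connect by an increasing affine piece down to $\tilde r_k$ chosen so that $\tilde r_k^{\,b}<\rho_k^{a}$, set $\psi=r^b$ on $[\tilde\rho_k,\tilde r_k]$ with $\tilde\rho_k\le\tilde r_k^{\,\theta}$, connect again down to any $r_{k+1}<\tilde\rho_k$, and iterate; you also supply details the paper leaves implicit ($r_k\to0$, which in fact holds automatically, continuity at $0$, and the extension for $r>r_1$). One cosmetic slip: in your heuristic paragraph $r^b$ is the \emph{upper} curve on $(0,1)$, not the lower one, but the condition you actually impose, $\tilde r_1<\rho_1^{a/b}$, is exactly the right one.
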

\begin{proof}
 Let $r_1<1$ and set $\psi(r)=r^a$ in $[\rho_1,r_1]$ for some $0<\rho_1<r_1$ with $\rho_1\leq r_1^\theta$. 
 Then choose $\tilde r_1$ with $(\tilde r_1)^b < (\rho_1)^a$ and choose $\psi(r)$ increasing (e.g. as an affine function) in $[\tilde r_1,\rho_1]$ with 
 $\psi(\tilde r_1) = (\tilde r_1)^b$ and $\psi(\rho_1)= (\rho_1)^a$. Next choose $\tilde\rho_1<\tilde r_1^\theta$ and set $\psi(r)=r^b$ in $[\tilde \rho_1,\tilde r_1]$ and increasing in $[r_2,\tilde r_1]$ for $r_2 < \tilde \rho_1$ with $\psi(r_2)=r_2^a$. Iterating this procedure yields the desired function.
\end{proof}

Now we choose $a>b>0$ and $\theta>1$ and set $h(x)=\psi(|x|)$ with $\psi$ as in Lemma \ref{L.oscill}. Furthermore we set $s_k=r_k^a$ and $\tilde s_k= \big(\tilde r_k\big)^b$. Then 
\begin{align*}
 G(s_k)&= \int_{\R^2} \big(s_k-h(x)\big)_+\,dx\\
 &= \int_{\rho_k\leq |x| \leq r_k} \big(s_k-|x|^a\big)_+\,dx + \int_{|x| \leq \rho_k} 
\big(s_k - h(x)\big)\,dx\\
&= \int_{|x| \leq r_k} \big(s_k-|x|^a\big)\,dx - \int_{|x| \leq \rho_k} \big(s_k-|x|^a\big)\,dx + \int_{|x| \leq \rho_k} \big(s_k-h(x)\big)\,dx\\
&= \frac{\pi a}{2+a} s_k^{1 + \frac{2}{a}} + \underbrace{\int_{|x| \leq\rho_k} \big( |x|^a-h(x)\big)\,dx}_{=:\eps_k}\,.
\end{align*}
The term $\eps_k$ can be estimated as 
\[
 |\eps_k| \leq \pi \rho_k^{2+a} \leq \pi r_k^{\theta (2+a)} \leq \pi s_k^{\theta (1+ \frac{2}{a})}
\]
and for $\theta>1$ we obtain that $\eps_k$ is of order $o\big(s_k^{1+\frac{2}{a}}\big)$. Hence 
\[
 \frac{G(s_k)}{s_k^{1+\frac{2}{a}}} \to \frac{\pi a}{2+a} \qquad \mbox{ as } k \to \infty
\]
and similarly we find 
\[
 \frac{G(\tilde s_k)}{{\tilde s_k}^{1+\frac{2}{b}}} \to \frac{\pi b}{2+b} \qquad \mbox{ as } k \to \infty\,.
\]

\paragraph{Oscillating masses.}
With the example from the previous paragraph we can now also easily construct a function $h$ with two minima such that the mass oscillates between the two minima as $t\to 0$.

For that assume that $h$ has two zeros in $x_1=0$ and in $x_2$ with $|x_1-x_2| \geq 3$. Then we choose $d \in (b,a)$ where $a>b>0$ and set $h(x)=\psi(|x|)$ in $B(0,1)$ with $\psi$ as above. Furthermore we set $h(x)=|x-x_2|^d$ in $B(x_2,1)$ and otherwise smooth and larger than one outside the two unit balls around $x_1$ and $x_2$. 

For $t<1$ we have $G(s)=G_1(s)+G_2(s)$ with   (cf. \eqref{cjsimple})
\[
 G_2(s) \sim  \frac{\pi d}{2+d} s^{1+ \frac{2}{d}} \qquad \mbox{ as } t \to 0
\]
and $G_1(s)$ behaves as $G(s)$ in the example above. Thus, in this case with the sequences $\{s_k\}_k, \{\tilde s_k\}_k$ as above
\[
 G(s_k) \sim \frac{\pi a}{2+a} s_k^{1+\frac{2}{a}} \qquad \mbox{ and } \qquad 
 G(\tilde s_k) \sim \frac{\pi d}{2+d} \big( \tilde s_k\big)^{1+\frac{2}{d}} \qquad \mbox{ as } k \to \infty\,.
\]
Consequently 
\[
 f_1(s_k) \to 1 \qquad \mbox{ and } \qquad  f_2(s_k) \to 0 \qquad \mbox{ as } k \to \infty\,,
\]
while 
\[
 f_1(\tilde s_k) \to 0 \qquad \mbox{ and } \qquad f_2(\tilde s_k) \to 1 \qquad \mbox{ as } k \to \infty\,. 
\]

\bigskip
{\bf Acknowledgement}

The authors gratefully acknowledge  support of the Deutsche Forschungsgemeinschaft (DFG,
German Research Foundation) through the collaborative research centre “Analysis of criticality: from
complex phenomena to models and estimates” (CRC 1720, Project-ID 539309657).
MR also acknowledges the hospitality of the Institute of Applied Mathematics at the University of Bonn.

\printbibliography

@article{GK87,
 author = {Giga, Yoshikazu and Kohn, Robert V.},
 title = {Characterizing blowup using similarity variables},
 fjournal = {Indiana University Mathematics Journal},
 journal = {Indiana Univ. Math. J.},
 issn = {0022-2518},
 volume = {36},
 pages = {1--40},
 year = {1987},
 language = {English},
 doi = {10.1512/iumj.1987.36.36001},
 zbMATH = {3969175},
 Zbl = {0601.35052}
}

@article{GK85,
 author = {Giga, Yoshikazu and Kohn, Robert V.},
 title = {Asymptotically self-similar blow-up of semilinear heat equations},
 fjournal = {Communications on Pure and Applied Mathematics},
 journal = {Commun. Pure Appl. Math.},
 issn = {0010-3640},
 volume = {38},
 pages = {297--319},
 year = {1985},
 language = {English},
 doi = {10.1002/cpa.3160380304},
 zbMATH = {3937666},
 Zbl = {0585.35051}
}

@article{AbKa20,
  title={On a model for phase separation on biological membranes and its relation to the Ohta–Kawasaki equation},
  volume={31},
  DOI={10.1017/S0956792519000056},
  number={2},
  journal={European Journal of Applied Mathematics},
  author={Abels, H. and Kampmann, J.},
  year={2020},
  pages={297–338}
}

@article{MoJE08,
 author = {Mori, Yoichiro and Jilkine, Alexandra and Edelstein-Keshet, Leah},
 doi = {10.1529/biophysj.107.120824},
 institution = {Institute of Applied Mathematics and Department of Mathematics University of British Columbia, Vancouver, Canada.},
 issn = {0006-3495},
 journal = {Biophys. J.},
 language = {eng},
 medline-pst = {ppublish},
 number = {9},
 pages = {3684--3697},
 pii = {S0006-3495(08)70444-2},
 pmid = {18212014},
 publisher = {Elsevier BV},
 source = {Crossref},
 title = {{Wave-Pinning and Cell Polarity from a Bistable Reaction-Diffusion System}},
 url = {https://doi.org/10.1529/biophysj.107.120824},
 volume = {94},
 year = {2008}
}

@article{GKRR16,
 author = {Garcke, Harald and Kampmann, Johannes and Rätz, Andreas and Röger, Matthias},
 doi = {10.1142/s0218202516500275},
 fjournal = {Mathematical Models and Methods in Applied Sciences},
 issn = {0218-2025, 1793-6314},
 journal = {Math. Models Methods Appl. Sci.},
 mrclass = {35K51 (35Q92 35R37 92C37)},
 mrnumber = {3484571},
 number = {06},
 pages = {1149--1189},
 publisher = {World Scientific Pub Co Pte Lt},
 source = {Crossref},
 title = {{A coupled surface-{Cahn–Hilliard} bulk-diffusion system modeling lipid raft formation in cell membranes}},
 url = {https://doi.org/10.1142/s0218202516500275},
 volume = {26},
 year = {2016}
}

@article{MFNS23,
  author = {Miller, Pearson W. and Fortunato, Daniel and Novaga, Matteo and Shvartsman, Stanislav Y. and Muratov, Cyrill B.},
  title = {Generation and Motion of Interfaces in a Mass-Conserving Reaction-Diffusion System},
  journal = {SIAM Journal on Applied Dynamical Systems},
  volume = {22},
  number = {3},
  pages = {2408-2431},
  year = {2023},
  doi = {10.1137/22M152548X},
  URL = {https://doi.org/10.1137/22M152548X},
  eprint = {https://doi.org/10.1137/22M152548X},
}

@article{LeIg02,
 author = {Levchenko, Andre and Iglesias, Pablo A.},
 doi = {10.1016/s0006-3495(02)75373-3},
 issn = {0006-3495},
 journal = {Biophys. J.},
 number = {1},
 pages = {50--63},
 publisher = {Elsevier BV},
 source = {Crossref},
 title = {{Models of Eukaryotic Gradient Sensing: {Application} to Chemotaxis of Amoebae and Neutrophils}},
 url = {https://doi.org/10.1016/s0006-3495(02)75373-3},
 volume = {82},
 year = {2002}
}

@article{PaDe99,
 author = {Parent, Carole A. and Devreotes, Peter N.},
 doi = {10.1126/science.284.5415.765},
 issn = {0036-8075, 1095-9203},
 journal = {Science},
 number = {5415},
 pages = {765--770},
 publisher = {American Association for the Advancement of Science (AAAS)},
 source = {Crossref},
 title = {{A Cell's Sense of Direction}},
 url = {https://doi.org/10.1126/science.284.5415.765},
 volume = {284},
 year = {1999}
}

@article{Turi52,
 author = {Turing, Alan M.},
 copyright = {Copyright Â 1952 The Royal Society},
 doi = {10.1098/rstb.1952.0012},
 issn = {2054-0280},
 journal = {Philos. Trans. R. Soc. Lond. B. Biol. Sci.},
 jstor_articletype = {primary
_article},
 jstor_formatteddate = {Aug. 14, 1952},
 number = {641},
 pages = {37--72},
 publisher = {The Royal Society},
 source = {Crossref},
 title = {{The chemical basis of morphogenesis}},
 url = {https://doi.org/10.1098/rstb.1952.0012},
 volume = {237},
 year = {1952}
}

@article{NGCR07,
 author = {Novak, Igor L. and Gao, Fei and Choi, Yung-Sze and Resasco, Diana and Schaff, James C. and Slepchenko, Boris M.},
 doi = {10.1016/j.jcp.2007.05.025},
 institution = {Center for Cell Analysis and Modeling, Department of Cell Biology, University of Connecticut Health Center, Farmington, Connecticut 06030.},
 issn = {0021-9991},
 journal = {J. Comput. Phys.},
 language = {eng},
 medline-pst = {ppublish},
 number = {2},
 pages = {1271--1290},
 pmid = {18836520},
 publisher = {Elsevier BV},
 source = {Crossref},
 title = {{Diffusion on a curved surface coupled to diffusion in the volume: {Application} to cell biology}},
 url = {https://doi.org/10.1016/j.jcp.2007.05.025},
 volume = {226},
 year = {2007}
}

@article{Miel13,
 author = {Mielke, Alexander},
 doi = {10.3934/dcdss.2013.6.479},
 issn = {1937-1179},
 journal = {Discrete \& Continuous Dynamical Systems - S},
 mrnumber = {2997555},
 number = {2},
 pages = {479--499},
 publisher = {American Institute of Mathematical Sciences (AIMS)},
 source = {Crossref},
 title = {{Thermomechanical modeling of energy-reaction-diffusion systems, including bulk-interface interactions}},
 url = {https://doi.org/10.3934/dcdss.2013.6.479},
 volume = {6},
 year = {2013}
}

@misc{KoSU26,
      title={Dynamics of Interfaces in the Two-Dimensional Wave-Pinning Model}, 
      author={Shunsuke Kobayashi and Koya Sakakibara and Taikei Uechi},
      year={2026},
      eprint={2601.04746},
      archivePrefix={arXiv},
      primaryClass={math.DS},
      url={https://arxiv.org/abs/2601.04746}, 
}

@article {AlET18,
    AUTHOR = {Alphonse, Amal and Elliott, Charles M. and Terra, Joana},
     TITLE = {A coupled ligand-receptor bulk-surface system on a moving
              domain: well posedness, regularity, and convergence to
              equilibrium},
   JOURNAL = {SIAM J. Math. Anal.},
  FJOURNAL = {SIAM Journal on Mathematical Analysis},
    VOLUME = {50},
      YEAR = {2018},
    NUMBER = {2},
     PAGES = {1544--1592},
      ISSN = {0036-1410,1095-7154},
   MRCLASS = {35K57 (35B30 35B65 35Q92 35R01 35R37 81Q80 92C37)},
  MRNUMBER = {3775132},
MRREVIEWER = {Patrick\ Guidotti},
       DOI = {10.1137/16M110808X},
       URL = {https://doi.org/10.1137/16M110808X},
}

@article {Diss21,
    AUTHOR = {Disser, Karoline},
     TITLE = {Global existence and uniqueness for a volume-surface
              reaction-nonlinear-diffusion system},
   JOURNAL = {Discrete Contin. Dyn. Syst. Ser. S},
  FJOURNAL = {Discrete and Continuous Dynamical Systems. Series S},
    VOLUME = {14},
      YEAR = {2021},
    NUMBER = {1},
     PAGES = {321--330},
      ISSN = {1937-1632,1937-1179},
   MRCLASS = {35K61 (35A01 35B45 35K57)},
  MRNUMBER = {4186214},
MRREVIEWER = {Dian\ K.\ Palagachev},
       DOI = {10.3934/dcdss.2020326},
       URL = {https://doi.org/10.3934/dcdss.2020326},
}

@article {AuBo24,
    AUTHOR = {Augner, Bj\"orn and Bothe, Dieter},
     TITLE = {Analysis of bulk-surface reaction-sorption-diffusion systems
              with {L}angmuir-type adsorption},
   JOURNAL = {J. Math. Pures Appl. (9)},
  FJOURNAL = {Journal de Math\'ematiques Pures et Appliqu\'ees. Neuvi\`eme
              S\'erie},
    VOLUME = {188},
      YEAR = {2024},
     PAGES = {215--272},
      ISSN = {0021-7824,1776-3371},
   MRCLASS = {35A01 (35D35 35K57 35Q92 58J35)},
  MRNUMBER = {4766800},
       DOI = {10.1016/j.matpur.2024.05.001},
       URL = {https://doi.org/10.1016/j.matpur.2024.05.001},
}

@article {MoTa23,
    AUTHOR = {Morgan, Jeff and Tang, Bao Quoc},
     TITLE = {Global well-posedness for volume-surface reaction-diffusion
              systems},
   JOURNAL = {Commun. Contemp. Math.},
  FJOURNAL = {Communications in Contemporary Mathematics},
    VOLUME = {25},
      YEAR = {2023},
    NUMBER = {4},
     PAGES = {Paper No. 2250002, 63},
      ISSN = {0219-1997,1793-6683},
   MRCLASS = {35A01 (35K57 35K58 35Q92)},
  MRNUMBER = {4572497},
MRREVIEWER = {Nicola\ Pintus},
       DOI = {10.1142/S021919972250002X},
       URL = {https://doi.org/10.1142/S021919972250002X},
}

@Article{HaRo18,
  author    = {Hausberg, Stephan and R{\"o}ger, Matthias},
  title     = {Well-posedness and fast-diffusion limit for a bulk--surface reaction--diffusion system},
  journal   = {Nonlinear Differential Equations and Applications NoDEA},
  year      = {2018},
  volume    = {25},
  number    = {3},
  pages     = {17},
  issn      = {1420-9004},
  day       = {27},
  doi       = {10.1007/s00030-018-0508-8},
  url       = {https://doi.org/10.1007/s00030-018-0508-8},
}

@Article{         RaeRoe12,
  title         = {Turing instabilities in a mathematical model for signaling
                  networks},
  author        = {R{\"a}tz, Andreas and R{\"o}ger, Matthias},
  journal       = {J. Math. Biol.},
  year          = {2012},
  number        = {6-7},
  pages         = {1215--1244},
  volume        = {65},
  coden         = {JMBLAJ},
  doi           = {10.1007/s00285-011-0495-4},
  fjournal      = {Journal of Mathematical Biology},
  issn          = {0303-6812},
  mrclass       = {Preliminary Data},
  mrnumber      = {2993944},
  url           = {http://dx.doi.org/10.1007/s00285-011-0495-4}
}

@Article{         RaeRoe14,
  title         = {Symmetry breaking in a bulk–surface reaction–diffusion
                  model for signalling networks},
  author        = {Andreas Rätz and Matthias Röger},
  journal       = {Nonlinearity},
  year          = {2014},
  number        = {8},
  pages         = {1805},
  volume        = {27},
}

@Article{GiMe72,
  author      = {A. Gierer and H. Meinhardt},
  title       = {A theory of biological pattern formation.},
  journal     = {Kybernetik},
  year        = {1972},
  volume      = {12},
  number      = {1},
  pages       = {30--39},
  language    = {eng},
  medline-pst = {ppublish},
  pmid        = {4663624},
}

@Article{RaEd17,
  author        = {Rappel, Wouter-Jan and Edelstein-Keshet, Leah},
  title         = {Mechanisms of Cell Polarization.},
  journal       = {Current opinion in systems biology},
  year          = {2017},
  volume        = {3},
  pages         = {43--53},
  month         = jun,
  issn          = {2452-3100},
  __markedentry = {[matthias:6]},
  abstract      = {Cell polarization is a key step in the migration, development, and organization of eukaryotic cells, both at the single cell and multicellular level. Research on the mechanisms that give rise to polarization of a given cell, and organization of polarity within a tissue has led to new understanding across cellular and developmental biology. In this review, we describe some of the history of theoretical and experimental aspects of the field, as well as some interesting questions and challenges for the future.},
  country       = {England},
  doi           = {10.1016/j.coisb.2017.03.005},
  issn-linking  = {2452-3100},
  mid           = {NIHMS867552},
  nlm-id        = {101698476},
  owner         = {NLM},
  pmc           = {PMC5640326},
  pmid          = {29038793},
  pubmodel      = {Print-Electronic},
  pubstatus     = {ppublish},
  revised       = {2018-11-13},
}

@Article{ElRV17,
  author    = {Elliott, Charles M and Ranner, Thomas and Venkataraman, Chandrasekhar},
  title     = {Coupled bulk-surface free boundary problems arising from a mathematical model of receptor-ligand dynamics},
  journal   = {SIAM Journal on Mathematical Analysis},
  year      = {2017},
  volume    = {49},
  number    = {1},
  pages     = {360--397},
  publisher = {SIAM},
}

@Article{DMMS18,
  author    = {Rocky Diegmiller and Hadrien Montanelli and Cyrill B. Muratov and Stanislav Y. Shvartsman},
  title     = {Spherical Caps in Cell Polarization},
  journal   = {Biophysical Journal},
  year      = {2018},
  volume    = {115},
  number    = {1},
  pages     = {26 - 30},
  issn      = {0006-3495},
  doi       = {https://doi.org/10.1016/j.bpj.2018.05.033},
  url       = {http://www.sciencedirect.com/science/article/pii/S0006349518306726},
}

@Article{BKMS17,
  author       = {Bothe, Dieter and K\"ohne, Matthias and Maier, Siegfried and Saal, J\"urgen},
  title        = {Global strong solutions for a class of heterogeneous catalysis models},
  volume       = {445},
  number       = {1},
  pages        = {677--709},
  issn         = {0022-247X},
  date         = {2017},
  doi          = {10.1016/j.jmaa.2016.08.016},
  fjournal     = {Journal of Mathematical Analysis and Applications},
  journaltitle = {J. Math. Anal. Appl.},
  mrclass      = {35Q92 (35D35 35K51)},
  mrnumber     = {3543789},
}

@Article{FeLT18,
  author    = {Fellner, Klemens and Latos, Evangelos and Tang, Bao Quoc},
  title     = {Well-posedness and exponential equilibration of a volume-surface reaction--diffusion system with nonlinear boundary coupling},
  journal   = {Ann. Inst. H. Poincar\'e Anal. Non Lin\'eaire},
  year      = {2018},
  volume    = {35},
  number    = {3},
  pages     = {643--673},
  issn      = {0294-1449},
  doi       = {10.1016/j.anihpc.2017.07.002},
  fjournal  = {Annales de l'Institut Henri Poincar\'e. Analyse Non Lin\'eaire},
  mrclass   = {35K61 (35A01 35B40 35K57)},
  mrnumber  = {3778646},
  url       = {https://doi.org/10.1016/j.anihpc.2017.07.002},
}

@Article{LeRa05,
  author    = {Levine, Herbert and Rappel, Wouter-Jan},
  title     = {Membrane-bound {T}uring patterns},
  journal   = {Phys. Rev. E},
  year      = {2005},
  volume    = {72},
  pages     = {061912},
  doi       = {10.1103/PhysRevE.72.061912},
  issue     = {6},
  numpages  = {5},
  publisher = {American Physical Society},
  url       = {http://link.aps.org/doi/10.1103/PhysRevE.72.061912},
}

@article{NiRV20,
 author = {Niethammer, Barbara and Röger, Matthias and Velázquez, Juan},
 doi = {10.4171/ifb/433},
 fjournal = {Interfaces and Free Boundaries. Mathematical Analysis, Computation and Applications},
 issn = {1463-9963},
 journal = {Interface. Free Bound.},
 mrclass = {92C37 (35K57)},
 mrnumber = {4089401},
 number = {1},
 pages = {85--117},
 publisher = {European Mathematical Society - EMS - Publishing House GmbH},
 source = {Crossref},
 title = {{A bulk-surface reaction-diffusion system for cell polarization}},
 url = {https://doi.org/10.4171/ifb/433},
 volume = {22},
 year = {2020}
}

@article{LNRV21,
 author = {Logioti, A. and Niethammer, B. and R{\"o}ger, M. and Vel{\'a}zquez, J. J. L.},
 URL = {https://doi.org/10.1137/20M1349114},
 eprint = {https://doi.org/10.1137/20M1349114},
 doi = {10.1137/20M1349114},
 fjournal = {SIAM Journal on Mathematical Analysis},
 issn = {0036-1410},
 journal = {SIAM J. Math. Anal.},
 language = {English},
 number = {1},
 pages = {1214--1238},
 title = {A parabolic free boundary problem arising in a model of cell polarization},
 volume = {53},
 year = {2021},
}

@article{LNRV23,
 author = {Anna Logioti and Barbara Niethammer and Matthias Röger and Juan J. L. Velázquez},
 doi = {10.1080/03605302.2023.2247467},
 eprint = {https://doi.org/10.1080/03605302.2023.2247467},
 journal = {Communications in Partial Differential Equations},
 number = {7-8},
 pages = {1-37},
 publisher = {Taylor \& Francis},
 title = {Qualitative properties of solutions to a mass-conserving free boundary problem modeling cell polarization},
 url = {https://doi.org/10.1080/03605302.2023.2247467},
 volume = {48},
 year = {2023}
}

@incollection {LNRV24,
    AUTHOR = {Logioti, Anna and Niethammer, Barbara and R\"oger, Matthias
              and Vel\'azquez, Juan J. L.},
     TITLE = {Interface behavior for the solutions of a mass conserving free
              boundary problem modelling cell polarization},
 BOOKTITLE = {Friends in partial differential equations---the {N}ina {N}.
              {U}raltseva 90th anniversary volume},
     PAGES = {249--274},
 PUBLISHER = {EMS Press, Berlin},
      YEAR = {2025},
      ISBN = {978-3-98547-094-5},
   MRCLASS = {35R37 (35L05 35R35)},
  MRNUMBER = {4967639},
}

@online{FNV26,
      title={On the shape of the positivity region for a free boundary problem describing cell polarization},
      author={Sebastián Flores Sepúlveda and Barbara Niethammer and Juan J. L. Velázquez},
      year={2026},
      eprint={2605.03553},
      archivePrefix={arXiv},
      primaryClass={math.AP},
      url={https://arxiv.org/abs/2605.03553},
}
\end{document}